\newtheorem{thm}{Theorem}[section]
\newtheorem{prop}[thm]{Proposition}
\newtheorem{lem}[thm]{Lemma}
\newtheorem{coro}[thm]{Corollary}
\newtheorem{defn}[thm]{Definition}
\newtheorem{expl}[thm]{Example}
\newtheorem{rmq}[thm]{Remark} 
\newtheorem{conj}[thm]{Conjecture}
\numberwithin{equation}{section}   
\newcommand{\rdbb}{\mathbb{R}^d}
\newcommand{\lra}{\longrightarrow}
\newcommand{\ebarrtext}{\overline{\mbox{Emb}}_c(\mathbb{R}, \mathbb{R}^d)}
\newcommand{\rbb}{\mathbb{R}}
\newcommand{\kdpt}{\mathcal{K}_d^{\bullet}}
\newcommand{\embthm}{\overline{\emph{Emb}}_c(M, \mathbb{R}^d)}
\title{ \textbf{The rational homology of spaces of long links}}
\date{}
\author{ Paul Arnaud Songhafouo Tsopméné} 
\begin{document}
\maketitle
\begin{abstract}
We provide a complete understanding of the rational homology of the space of long links of $m$ strands in $\mathbb{R}^d$, when $d \geq 4 $. First, we construct explicitly a cosimplicial chain complex, $L^{\bullet}_*$, whose totalization is quasi-isomorphic to the singular chain complex of the space of long links. Next we show, using the fact that the Bousfield-Kan spectral sequence associated to $L^{\bullet}_*$ collapses at the $E^2$ page, that the homology Bousfield-Kan spectral sequence associated to the Munson-Voli\'c cosimplicial model for the space of long links collapses at the  $E^2$ page rationally, solving a conjecture of Munson-Voli\'c. Our method enables us also to determine the rational homology of high dimensional analogues of spaces of long links. Our last result states that the radius of convergence of the Poincar\'e series for the space of long links (modulo immersions) tends to zero as $m$ goes to the infinity. 
\end{abstract}

\section{Introduction} 

A \textit{long link of $m$ strands} in $\mathbb{R}^d$, $d \geq 3$, is a smooth embedding $\coprod_{1}^m \mathbb{R} \hookrightarrow \mathbb{R}^d$ of $m$ copies of $\mathbb{R}$ inside $\mathbb{R}^d$, which coincides outside a compact set with a fixed standard embedding. Such embedding is said to be compactly supported. We denote by $\mbox{Emb}_c(\coprod_1^m \mathbb{R}, \mathbb{R}^d)$ or simply by $\mathcal{L}_m^d$ the space of long links of $m$ strands, and define the space $\mbox{Imm}_c(\coprod_1^m \mathbb{R}, \mathbb{R}^d)$ of long immersions of $m$ strands analogously. It is clear that there is an inclusion $\mbox{Emb}_c(\coprod_1^m \mathbb{R}, \mathbb{R}^d) \hookrightarrow \mbox{Imm}_c(\coprod_1^m \mathbb{R}, \mathbb{R}^d)$, whose homotopy fiber is called the \textit{space of long links modulo immersions}, and is denoted by $\overline{\mbox{Emb}}_c(\coprod_{i=1}^m \mathbb{R}, \mathbb{R}^d)$ or simply by $\overline{\mathcal{L}}_m^d$. This paper is devoted to the study of the latter space. More precisely, we completely determine the rational homology of $\overline{\mathcal{L}}_m^d$. In fact, we prove that the homology  Bousfield-Kan spectral sequence  associated to the Munson-Voli\'c cosimplicial model for $\overline{\mathcal{L}}_m^d$ collapses at the $E^2$ page rationally.  

 Let $\mbox{Conf}(k, \mathbb{R}^d)$ denote the space of configuration of $k$ points in $\rdbb$.  We will construct an explicit cosimplicial chain complex $L^{\bullet}_*$, where
$$L^p_*=H_*(\mbox{Conf}(mp, \rdbb); \mathbb{Q}).$$ 
Let $B_d$ denote the little $d$-disks operad, and let $s^{-p}$ be the suspension functor of degree $-p$. Define the totalization $\mbox{Tot} L^{\bullet}_*$ to be
$$\mbox{Tot} L^{\bullet}_*= \underset{p \geq 0}{\bigoplus} (s^{-p} L^p_*),$$
where the differential is the alternate sum of cofaces. We will show in this introduction that the homology of this totalization can be interpreted by the $\vee_{i=1}^mS^1$-homology of $H_*(B_d; \mathbb{Q})$. 

Our first result says that the cosimplicial chain complex $L_*^{\bullet}$ gives a cosimplicial model for the singular chain complex of the space of long links. 

\begin{thm} \label{thm1} For $d \geq 4$, the totalization of $L_*^{\bullet}$ is quasi-isomorphic to the chain complex of the space of long links of $m$ strands in $\mathbb{R}^d$. That is,
$$\emph{Tot}L_*^{\bullet} \simeq C_*(\overline{\mathcal{L}}_m^d) \otimes \mathbb{Q}.$$
\end{thm}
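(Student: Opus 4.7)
The plan is to compare two cosimplicial chain complexes: on one side the rationalized Munson-Voli\'c model for $\overline{\mathcal{L}}_m^d$, on the other side $L_*^{\bullet}$. Let $\mathcal{L}^{\bullet}$ denote the Munson-Voli\'c cosimplicial space whose $p$-th term, for the space of links modulo immersions, is (up to weak equivalence) the Fulton-MacPherson compactification of $\mbox{Conf}(mp, \mathbb{R}^d)$, and whose cofaces are built out of operadic insertions in the little $d$-disks operad $B_d$. The hypothesis $d > 5$ guarantees, via the Munson-Voli\'c convergence theorem, that $\mbox{Tot}\,\mathcal{L}^{\bullet}$ is weakly equivalent to $\overline{\mathcal{L}}_m^d$, and moreover, because the connectivity of the $p$-th term grows at least linearly in $p$, the homology Bousfield-Kan spectral sequence converges strongly to $H_*(\overline{\mathcal{L}}_m^d;\mathbb{Q})$. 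Applying the singular $\mathbb{Q}$-chain functor termwise, one gets
\[
\mbox{Tot}\bigl(C_*(\mathcal{L}^{\bullet})\otimes \mathbb{Q}\bigr) \simeq C_*(\overline{\mathcal{L}}_m^d)\otimes \mathbb{Q},
\]
so it suffices to produce a zigzag of quasi-isomorphisms of cosimplicial chain complexes between $C_*(\mathcal{L}^{\bullet})\otimes \mathbb{Q}$ and $L_*^{\bullet}$.

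The construction of this zigzag rests on Kontsevich's formality of $B_d$ for $d \geq 2$, and its refinement (due to Lambrechts--Voli\'c and Arone--Turchin) saying that the right $B_d$-module formed by the Fulton-MacPherson configuration spaces is formal over $\mathbb{Q}$. Concretely, one chooses a symmetric operad map $C_*(B_d;\mathbb{Q}) \xrightarrow{\simeq} H_*(B_d;\mathbb{Q})$ that extends to a module quasi-isomorphism on configurations. The $p$-th coface of $\mathcal{L}^{\bullet}$ doubles a chosen strand using an element of $B_d(2)$ and the cosimplicial identities are governed entirely by the operadic module structure; applying the formality map levelwise therefore yields a cosimplicial quasi-isomorphism $C_*(\mathcal{L}^{\bullet})\otimes \mathbb{Q} \xrightarrow{\simeq} L_*^{\bullet}$. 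One then checks that $L_*^{\bullet}$ as defined in the paper is exactly the cosimplicial chain complex obtained from the $H_*(B_d;\mathbb{Q})$-module of configuration homology, with the standard doubling/forgetting faces and degeneracies.

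The main obstacle is the compatibility step: ensuring that the formality zigzag is not merely a levelwise equivalence but respects the entire cosimplicial structure, including the strand-forgetting codegeneracies and the combinatorics specific to the $m$-stranded case. This is where one must use a formality statement at the module level (not just at the operad level), and where one needs to know that the Munson-Voli\'c cofaces agree, after choosing a suitable model, with the ones dictated by operadic insertion of a point in $B_d(2)$. Once this is established, applying $\mbox{Tot}$ to the zigzag, and using strong convergence of the Bousfield-Kan spectral sequence on both sides (which the connectivity hypothesis on $d$ provides), gives
\[
\mbox{Tot}\, L_*^{\bullet} \simeq \mbox{Tot}\bigl(C_*(\mathcal{L}^{\bullet})\otimes \mathbb{Q}\bigr) \simeq C_*(\overline{\mathcal{L}}_m^d)\otimes \mathbb{Q},
\]
as desired. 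The identification of $H_*(\mbox{Tot}\, L_*^{\bullet})$ with the $\vee_{i=1}^m S^1$-homology of $H_*(B_d;\mathbb{Q})$ announced in the introduction is then a purely algebraic reinterpretation, since the cosimplicial structure on $L_*^{\bullet}$ is built from the operadic module structure of configuration homology over the Poisson/Gerstenhaber operad $H_*(B_d;\mathbb{Q})$.
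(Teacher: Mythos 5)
Your plan is genuinely different from the paper's, and in both places where it diverges there is a real gap.

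The paper never attempts a cosimplicial zigzag between $C_*(\mathcal{K}_d^{m\bullet};\mathbb{Q})$ and $L_*^{\bullet}$. Instead it replaces $\overline{\mathcal{L}}_m^d$ by $\overline{\mbox{Emb}}_c(M,\mathbb{R}^d)$ for a punctured plane $M$ (Proposition~\ref{equivalence_prop}), runs the compactly supported Goodwillie--Weiss tower there, uses Arone--Turchin's identification of the tower with a derived mapping object of infinitesimal $C_*B_2$-bimodules (Propositions~\ref{algbr2_prop}, \ref{prop1-1}, \ref{prop2}), applies relative formality of the pair $(B_2,B_d)$ once on the target only, passes from $\mathrm{Com}$-infinitesimal bimodules to right $\Gamma$-modules (Lemma~\ref{equi_lem}), and finally reads off the totalization of $L_*^{\bullet}$ via Proposition~\ref{iso_prop}. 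The whole point of this detour is that it never requires a quasi-isomorphism that is simultaneously compatible with the operadic insertions \emph{and} with the multiplicative/coface structure of the Munson--Voli\'c cosimplicial space. Indeed, looking at Section~\ref{section4}, the author only establishes an isomorphism of $E^1$ pages (Lemma~\ref{equality_lem}), not a cosimplicial equivalence; if your zigzag existed, Theorem~\ref{thm2} would be immediate and Section~\ref{section4} superfluous.

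The step you flag as "the main obstacle" is exactly the missing content, and it does not follow from operad formality or module formality as cited. The cofaces of $\mathcal{K}_d^{m\bullet}$ are governed by a chosen multiplication, i.e.\ by the structure of $B_d$ (or the Kontsevich operad $\mathcal{K}_d$) as an operad under the associative operad; a formality quasi-isomorphism that is merely $\Sigma$-equivariant and operadic has no reason to intertwine with this chosen multiplication, hence no reason to give a cosimplicial map. Making that compatible is precisely the content of \emph{multiplicative} formality, which the author and Moriya proved separately (for $m=1$, i.e.\ for Sinha's $\mathcal{K}_d^{\bullet}$); for the $m$-stranded $\mathcal{K}_d^{m\bullet}$ the analogous compatibility with the strand-doubling cofaces and strand-forgetting codegeneracies is not established by anything you cite, and cannot simply be asserted "levelwise." A second, smaller gap is your very first display: $\mathrm{Tot}\,\mathcal{L}^{\bullet}\simeq\overline{\mathcal{L}}_m^d$ does not by itself give $\mathrm{Tot}\bigl(C_*(\mathcal{L}^{\bullet})\otimes\mathbb{Q}\bigr)\simeq C_*(\overline{\mathcal{L}}_m^d)\otimes\mathbb{Q}$; one needs a chain-level convergence statement, which the paper supplies not by a connectivity estimate for the Bousfield--Kan spectral sequence but by Theorem~\ref{equi2_lem}, applied to the space $\overline{\mbox{Emb}}_c(M,\mathbb{R}^d)$ via embedding calculus, and the hypothesis $d>5$ is exactly what Theorem~\ref{equi2_lem} needs with $n=2$.
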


The following corollary is an immediate consequence of Theorem~\ref{thm1}.

\begin{coro} \label{coro1}
For $d \geq 4$, the rational homology of the space of long links of $m$ strands is isomorphic to the $\vee_{i=1}^mS^1$-homology of $H_*(B_d; \mathbb{Q})$. That is,
$$H_*(\overline{\mathcal{L}}_m^d; \mathbb{Q}) \cong HH^{\vee_{i=1}^m S^1}(H_*(B_d; \mathbb{Q})).$$
\end{coro}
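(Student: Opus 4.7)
The statement is described as an immediate consequence of Theorem~\ref{thm1}, so the plan splits into two short steps. The first is purely formal: apply $H_*$ to the quasi-isomorphism $\mbox{Tot}\,L^\bullet_* \simeq C_*(\overline{\mathcal{L}}_m^d) \otimes \mathbb{Q}$ of Theorem~\ref{thm1} to get
$$H_*(\mbox{Tot}\,L^\bullet_*) \cong H_*(\overline{\mathcal{L}}_m^d; \mathbb{Q}).$$

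The remaining step, as promised in the introduction, is to identify $H_*(\mbox{Tot}\,L^\bullet_*)$ with $HH^{\vee_{i=1}^m S^1}(H_*(B_d; \mathbb{Q}))$. To do this I would fix a minimal pointed simplicial model $X_\bullet$ of $\vee_{i=1}^m S^1$ whose set of non-basepoint $p$-simplices has cardinality $mp$ (each $S^1$ factor modeled as $\Delta[1]/\partial\Delta[1]$ contributes $p$ non-basepoint $p$-simplices). Pirashvili's higher Hochschild construction applied to the algebra $H_*(B_d; \mathbb{Q})$, viewed in the appropriate operadic sense, then yields at level $p$ precisely $H_*(\mbox{Conf}(mp, \mathbb{R}^d); \mathbb{Q}) = L^p_*$, with cofaces and codegeneracies induced by the simplicial structure maps of $X_\bullet$. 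One then checks that these agree with the cofaces of $L^\bullet_*$ constructed in the main body, so that by definition of higher Hochschild homology
$$H_*(\mbox{Tot}\,L^\bullet_*) = HH^{\vee_{i=1}^m S^1}(H_*(B_d; \mathbb{Q})).$$
Combining with Step~1 gives the corollary.

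The only point requiring genuine verification is the matching of cosimplicial structure maps in the second step: one must confirm that the doubling/forgetting maps on configuration space homology used to define $L^\bullet_*$ correspond, under Pirashvili's construction, to the face and degeneracy maps of $X_\bullet$. This is combinatorial bookkeeping rather than a conceptual difficulty, and is essentially engineered into the definition of $L^\bullet_*$; I do not foresee a serious obstacle here beyond keeping track of indices on the $m$ wedge summands.
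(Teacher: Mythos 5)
Your proof is correct and essentially matches the paper's, which declares the corollary an immediate consequence of Theorem~\ref{thm1} without further argument. The only thing worth noting is that your second step is superfluous given the paper's conventions: Definition~\ref{cosimplicial_defn} literally sets $L^\bullet_* := H_*(B_d;\mathbb{Q})((\vee_{i=1}^m S^1)_\bullet)$, and the introduction \emph{defines} $HH^{X}(A)$ to be the homology of $\mathrm{Tot}\,A(X_\bullet)$, so the identification $HH^{\vee_{i=1}^m S^1}(H_*(B_d;\mathbb{Q})) = H_*(\mathrm{Tot}\,L^\bullet_*)$ is tautological and no matching of cosimplicial structure maps is needed.
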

We now explain what we mean by the \textit{$\vee_{i=1}^mS^1$-homology of $H_*(B_d; \mathbb{Q})$}. Let $\Gamma$ denote the category of finite pointed sets whose morphisms are maps preserving the base point. If $X_{\bullet}$ is a simplicial object in $\Gamma$, and if $A$ is a contravariant functor from $\Gamma$ to chain complexes, then the composite $A(X_{\bullet})\colon \Delta \longrightarrow  \mbox{Ch}_*$ yields  a cosimplicial chain complex, and the homology of its totalization, denoted by $HH^{X}(A)$, is called   the \textit{$X$\textit{-homology of} $A$}. Here, the simplicial model (the one we construct at the beginning of Section~\ref{section4}) $(\vee_{i=1}^mS^1)_{\bullet}$ of the wedge of $m$ copies of the circle is viewed as a simplicial object in $\Gamma$, while  the homology $H_*(B_d; \mathbb{Q}) \colon \Gamma \longrightarrow \mbox{Ch}_*$  is viewed as a contravariant functor from $\Gamma$ to chain complexes. Hence, the composite  $H_*(B_d, \mathbb{Q})((\vee_{i=1}^mS^1)_{\bullet})$ gives another way to see the cosimplicial chain complex $L^{\bullet}_*$ so that we can set
$$L_*^{\bullet}:=H_*(B_d, \mathbb{Q})((\vee_{i=1}^m S^1)_{\bullet}).$$
We now state the second and the most important result of this paper, which solves a conjecture of Munson-Voli\'c. In \cite{mun_vol09}, B. Munson and I. Voli\'c build a cosimplicial space (we denote it by $    \mathcal{K}_d^{m\bullet}$) that gives a cosimplicial model for the space $\overline{\mathcal{L}}_m^d$  of long links of $m$ strands in $\mathbb{R}^d$, when $d \geq 4$.  They also define two spectral sequences that converge respectively to the homotopy and cohomology of the space $\overline{\mathcal{L}}_m^d$ of long links. In this paper, we  look at the homology Bousfield-Kan spectral sequence associated to $\mathcal{K}^{m\bullet}_d$, which converges to the homology  $H_*(\overline{\mathcal{L}}_m^d)$ (this is a consequence of Theorem~\ref{thm1}). 

\begin{conj} \emph{[Munson-Voli\'c]} This spectral sequence collapses at the $E^2$ page rationally for $d \geq 4$.
\end{conj}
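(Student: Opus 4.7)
The plan is to deduce the collapse directly from Theorem~\ref{thm1} by a dimension count, once the $E^1$ page of the Munson-Voli\'c homology spectral sequence has been identified with $L^{\bullet}_*$. I work in the range $d > 5$, where Theorem~\ref{thm1} is available.

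First I would show that, as cosimplicial chain complexes, the first page $(H_*(\mathcal{K}_d^{m\bullet}; \mathbb{Q}), d_1)$ of the homology Bousfield-Kan spectral sequence coincides with $L^{\bullet}_*$. At the level of bigraded vector spaces this is clear: the Munson-Voli\'c cosimplicial space has $\mathcal{K}_d^{mp}$ rationally equivalent to $\mbox{Conf}(mp, \mathbb{R}^d)$, so
$$E^1_{-p,q} = H_q(\mathcal{K}_d^{mp}; \mathbb{Q}) \cong L^p_q.$$
The delicate part is the identification of cofaces: one has to check that the alternating sum of the Munson-Voli\'c cofaces, after passing to rational homology of configuration spaces, agrees with the alternating sum of the cofaces of $L^{\bullet}_*$ built from the operadic structure of $H_*(B_d; \mathbb{Q})$ on the simplicial model $(\vee_{i=1}^m S^1)_{\bullet}$.

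Granted this identification, the $E^2$ page is by definition the cohomology of $L^{\bullet}_*$ with respect to the alternating sum of cofaces, which is exactly $H_*(\mbox{Tot}\, L^{\bullet}_*)$, and Theorem~\ref{thm1} gives $H_*(\mbox{Tot}\, L^{\bullet}_*) \cong H_*(\overline{\mathcal{L}}_m^d; \mathbb{Q})$. On the other hand the Bousfield-Kan spectral sequence converges to $H_*(\overline{\mathcal{L}}_m^d; \mathbb{Q})$. Comparing total dimensions in each topological degree $n = q - p$,
$$\sum_{q-p=n} \dim_{\mathbb{Q}} E^2_{-p,q} \;=\; \dim_{\mathbb{Q}} H_n(\overline{\mathcal{L}}_m^d; \mathbb{Q}) \;=\; \sum_{q-p=n} \dim_{\mathbb{Q}} E^{\infty}_{-p,q}.$$
Since each $E^{r+1}$ page is a subquotient of $E^r$, we have $\dim_{\mathbb{Q}} E^{\infty}_{-p,q} \leq \dim_{\mathbb{Q}} E^2_{-p,q}$ termwise; equality of the sums above then forces equality in every bidegree, so $E^r = E^2$ for all $r \geq 2$ and the spectral sequence collapses at $E^2$.

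The main obstacle is the first step, matching $d_1$. One needs a precise description of the Munson-Voli\'c cofaces at the level of rational homology, in terms of the operadic cofaces used to construct $L^{\bullet}_*$. This is the step in which the specific geometry of the cosimplicial model $\mathcal{K}_d^{m\bullet}$ genuinely enters; once it is carried out, the collapse at $E^2$ is a purely formal consequence of Theorem~\ref{thm1} and convergence of the spectral sequence.
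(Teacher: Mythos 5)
Your plan is essentially the paper's own argument. The paper's proof of Theorem~\ref{thm2} rests on exactly the three facts you invoke: an isomorphism of $E^1$ pages (this is Lemma~\ref{equality_lem}), the fact that the spectral sequence $\{E^r(L^{\bullet}_*)\}$ already collapses at $E^2$ because each column $L^p_*$ has zero internal differential (Lemma~\ref{collapses_lem} — this is what silently underlies your claim that the $E^2$ page of $L^{\bullet}_*$ gives the full total homology $H_*(\mathrm{Tot}\,L^{\bullet}_*)$), and the identification of abutments coming from Theorem~\ref{thm1}; the dimension-count comparison between $E^2$ and $E^{\infty}$ in each total degree is then the same formal conclusion the paper draws.

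The only step you flag as unresolved — that the $d_1$-differential of the Bousfield--Kan spectral sequence for $\mathcal{K}_d^{m\bullet}$ agrees with the cofaces of $L^{\bullet}_*$ — is precisely the content of Lemma~\ref{equality_lem}. The paper handles it not by a hands-on comparison of geometric doubling maps, but by upgrading the isomorphism $H_*(B_d)\cong H_*(\mathcal{K}_d)$ (which exists by Sinha's equivalence of operads) to an isomorphism of right $\Gamma$-modules, using that both have the $\mathrm{Com}$-infinitesimal-bimodule structure induced by degree-zero homology and that $\underset{\mathrm{Com}}{\mathrm{InfBim}}\cong\underset{\Gamma}{\mathrm{Rmod}}$ (Lemma~\ref{equi_lem}). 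This yields an isomorphism $L^{\bullet}_* \cong H_*(\mathcal{K}_d^{m\bullet})$ of cosimplicial chain complexes, which packages exactly the coface comparison you were worried about; so your outline is correct and only needs this lemma to be complete.
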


%Moreover, we prove (by the following theorem, which is our second result) that this spectral sequence collapses at the $E^2$ page, thus solving a conjecture of Munson-Voli\'c. 
\begin{thm} \label{thm2}
For $d \geq 4$, the homology Bousfield-Kan spectral sequence associated to the Munson-Voli\'c cosimplicial model $\mathcal{K}_d^{m\bullet}$ for the space of long links of $m$ strands in $\mathbb{R}^d$ collapses at the $E^2$ page rationally. 
\end{thm}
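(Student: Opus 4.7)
The plan is to compare the homology Bousfield-Kan spectral sequence of $\mathcal{K}_d^{m\bullet}$ with the column-filtration spectral sequence of the cosimplicial chain complex $L_*^{\bullet}$, and to transport the collapse of the latter to the former. Both spectral sequences converge to $H_*(\overline{\mathcal{L}}_m^d; \mathbb{Q})$: for the Munson-Volić cosimplicial space this follows from Theorem~\ref{thm1} (as noted in the introduction, convergence of the Munson-Volić spectral sequence is a consequence of Theorem~\ref{thm1}), and for $L_*^\bullet$ it follows directly from the definition of the totalization together with Theorem~\ref{thm1}.

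The first step is to identify the $E^2$ pages. At $E^1$, the Munson-Volić spectral sequence has $H_q(\mathcal{K}_d^{mp}; \mathbb{Q})$ in bidegree $(p,q)$, with horizontal differential the alternating sum of cofaces. The $p$-th cosimplicial level $\mathcal{K}_d^{mp}$ is built from a Fulton-MacPherson compactified configuration space of $mp$ points in $\mathbb{R}^d$, and since we work modulo immersions the tangential data that appears in the full embedding model does not contribute to the relevant bidegrees rationally. Hence $H_q(\mathcal{K}_d^{mp}; \mathbb{Q}) \cong H_q(\mbox{Conf}(mp, \mathbb{R}^d); \mathbb{Q}) = L^p_q$. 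Next, one must verify that the cosimplicial structures match: the Munson-Volić cofaces are defined geometrically by doubling a configuration point and inserting infinitesimal data, whereas the cosimplicial structure on $L_*^\bullet$ comes from the simplicial model $(\vee_{i=1}^m S^1)_\bullet$ together with the operadic composition in $H_*(B_d; \mathbb{Q})$. Identifying them reduces to recognising the Munson-Volić doubling as the homological avatar of the relevant basepoint-preserving map of finite sets which induces operadic composition in $H_*(B_d;\mathbb{Q})$.

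Once the two $E^2$ pages are identified as bigraded vector spaces with common abutment $H_*(\overline{\mathcal{L}}_m^d; \mathbb{Q})$, the conclusion is formal: one always has $\dim E^\infty_{p,q} \leq \dim E^2_{p,q}$, with equality in each bidegree if and only if the spectral sequence collapses at $E^2$. Since the $L_*^\bullet$ spectral sequence collapses and shares both its $E^2$ page and its abutment with the Munson-Volić one, equality is forced on the Munson-Volić side as well. The main obstacle is the cosimplicial matching: the dimension-by-dimension agreement of the two $E^1$ pages is largely bookkeeping via the Fadell-Neuwirth-type decomposition, but showing that the geometric Munson-Volić cofaces genuinely induce the same maps as the algebraic cofaces coming from the wedge-of-circles simplicial model requires a careful compatibility argument (naturality of the formality of $B_d$ over $\mathbb{Q}$ is the natural tool), and this is where the substantive work lies.
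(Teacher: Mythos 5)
Your argument follows essentially the same route as the paper: one matches the $E^1$ pages as in Lemma~\ref{equality_lem}, uses Lemma~\ref{collapses_lem} for the collapse of the $L_*^{\bullet}$ spectral sequence, and invokes Theorem~\ref{thm1} to identify the abutments, whence a dimension-count forces collapse on the Munson--Voli\'c side. The one point where your sketch diverges from the paper is the coface-matching step: the paper does not appeal to formality of $B_d$ there, but to the fact that Sinha's weak equivalence $B_d \simeq \mathcal{K}_d$ induces an isomorphism of $\mbox{Com}$-infinitesimal bimodules (equivalently, by Lemma~\ref{equi_lem}, of right $\Gamma$-modules) $H_*(B_d) \cong H_*(\mathcal{K}_d)$, which combined with the cardinality count of Proposition~\ref{cardinal_prop} immediately gives an isomorphism of cosimplicial chain complexes $L_*^{\bullet} \cong H_*(\mathcal{K}_d^{m\bullet})$.
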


\begin{rmq}
Our method enables us also to determine the rational homology of high dimensional analogues of spaces $\overline{\emph{Emb}}_c(\coprod_{i=1}^m \mathbb{R}^n, \mathbb{R}^d)$ of long links modulo immersions.  More precisely, as in the case of long links, we construct an explicit cosimplicial chain complex $L_*^{n\bullet}$
and we prove that it gives a cosimplicial model for the singular chain complex of $\overline{\emph{Emb}}_c(\coprod_{i=1}^m \mathbb{R}^n, \mathbb{R}^d)$. Thus we obtain Theorem~\ref{thm1-1}, Corollary~\ref{coro1-1} and Proposition~\ref{prop1-1_link}. With the same strategy, one can go further by understanding the rational homology of $\overline{\mbox{Emb}}_c(\coprod_{i=1}^m \rbb^{n_i}, \rdbb)$ for any integers $n_i \geq 1$. 
\end{rmq}

The case $m=1$ (this case corresponds to  the space of long knots (modulo immersions) $\overline{\mbox{Emb}}_c(\mathbb{R}, \mathbb{R}^d)$) was studied these last years by several authors. 
First,  Sinha constructs in \cite{sin06} a cosimplicial model $\mathcal{K}_d^{\bullet}$ of the space of long knots $\overline{\mbox{Emb}}_c(\mathbb{R}, \mathbb{R}^d)$, when $d \geq 4$. Next, Lambrechts, Turchin and Voli\'c prove \cite{lam_tur_vol10} that the homology Bousfield-Kan spectral sequence associated to $\mathcal{K}_d^{\bullet}$ collapses at the $E^2$ page rationally, when $d \geq 4$. Few years later, the author \cite{songhaf12} and Moriya \cite{moriya12} prove independently that the collapsing result still holds for $d \geq 3$, and simplify the proof of the main result of \cite{lam_tur_vol10}. Notice that, in this paper, we produce (using a completely different approach than that of \cite{lam_tur_vol10, moriya12, songhaf12}) another proof of the collapsing result. 
%Indeed, one can see that the spectral sequence $\{E^r(L_*^{\bullet})\}_{r \geq 0}$ is isomorphic from the $E^2$ page to the homology Bousfield-Kan spectral sequence associated to the Sinha cosimplicial space $\mathcal{K}^{\bullet}_d$.

Other interesting results obtained in the study of  the space of long knots are the following. The author \cite{songhaf12} and Moriya \cite{moriya12} independently discover the \textit{multiplicative formality} (for $d \geq 3$) of the Kontsevich operad $\mathcal{K}_d({\bullet})$. This result has two strong consequences: the first one is immediate and says that Sinha's cosimplicial space $\mathcal{K}^{\bullet}_d$ is formal, when $d \geq 3$. The second one and most important furnishes a complete understanding of the rational homology of the space of long knots as a Gerstenhaber algebra. In fact, this second consequence states \cite[Corollary 1.6]{songhaf12} that for $d \geq 4$, the isomorphism of vector spaces between the $E^2$ page and the homology $H_*(\overline{\mbox{Emb}}_c(\mathbb{R}, \mathbb{R}^d))$ of the space of long knots (modulo immersions) respects the Gerstenhaber algebra structure.

We close this introduction with our last result, which concerns the \textit{Poincar\'e series} for the space of long links. In \cite{komawila12} Komawila and Lambrechts study the $E^2$ page of the cohomology Bousfield-Kan spectral sequence associated to the Munson-Voli\'c cosimplicial space. They show that the coefficients of the associated Euler series have an exponential growth of rate $m^{\frac{1}{d-1}} > 1$. Using now our collapsing Theorem~\ref{thm2}, we deduce the following result.

\begin{thm} \label{thm3}
For $d \geq 4$ the radius of convergence of the Poincar\'e series for the space of long links (modulo immersions) $\overline{\mathcal{L}}_m^d$ is less than or equal to $(\frac{1}{m})^{\frac{1}{d-1}}$. Therefore the Betti numbers of $\overline{\mathcal{L}}_m^d$  grow at least exponentially. \index{Betti numbers}
\end{thm}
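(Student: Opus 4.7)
The plan is to combine the collapse result (Theorem~\ref{thm2}) with the Euler--Poincar\'e growth estimate of Komawila--Lambrechts~\cite{komawila12}. Write $P(t) = \sum_{n \geq 0} b_n t^n$ for the Poincar\'e series, where $b_n := \dim_{\mathbb{Q}} H_n(\overline{\mathcal{L}}_m^d; \mathbb{Q})$, and let $\rho$ denote its radius of convergence; the goal is to prove $\rho \leq (1/m)^{1/(d-1)}$.

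First, I would use Theorems~\ref{thm1} and~\ref{thm2} to read the Betti numbers directly off the $E^2$ page. Let $\{E^{p,q}_r\}$ denote the homology Bousfield--Kan spectral sequence associated to $\mathcal{K}_d^{m\bullet}$. By Theorem~\ref{thm1} it converges to $H_*(\overline{\mathcal{L}}_m^d; \mathbb{Q})$, and by Theorem~\ref{thm2} it collapses at $E^2$ for $d>5$; hence $b_n$ is obtained by summing $\dim_{\mathbb{Q}} E^2_{p,q}$ along the antidiagonal of total degree $n$.

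Next, I would compare $P(t)$ with the Euler--Poincar\'e series
$$ \chi(t) = \sum_n \chi_n \, t^n, \qquad \chi_n := \sum_{q-p=n} (-1)^p \dim_{\mathbb{Q}} E^2_{p,q} $$
of the $E^2$ page, which (up to the change of variable relating the cohomological and homological versions of the BKSS, whose $E^2$ bidegrees carry the same dimensions because each configuration space has finite-dimensional rational (co)homology) is the series analyzed in \cite{komawila12}. The triangle inequality and the non-negativity of $\dim E^2_{p,q}$ give the trivial bound $|\chi_n| \leq \sum_{q-p=n} \dim E^2_{p,q} = b_n$ for every $n$. Consequently, whenever $P$ converges absolutely at a point $t_0 > 0$, so does $\chi$; equivalently, $\rho$ is at most the radius of convergence of $\chi$.

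Finally, the main theorem of \cite{komawila12} says precisely that the $|\chi_n|$ grow exponentially at rate $m^{1/(d-1)}$, so the radius of convergence of $\chi$ equals $(1/m)^{1/(d-1)}$. Combined with the previous step this yields $\rho \leq (1/m)^{1/(d-1)}$, proving the first assertion. For $m \geq 2$ this bound is strictly less than $1$, which forces $\limsup_n b_n^{1/n} \geq m^{1/(d-1)} > 1$ and hence the claimed exponential growth of the Betti numbers. The only obstacle is bookkeeping: one must carefully align the signs, the bidegree indexing and the cohomology-versus-homology conventions between \cite{komawila12} and the homology BKSS of Theorem~\ref{thm2}, so that the two Euler series literally coincide and the growth estimate transfers.
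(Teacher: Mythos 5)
Your proposal follows the paper's strategy exactly: combine the collapse of Theorem~\ref{thm2} with the Komawila--Lambrechts Euler-series computation, and then translate exponential growth of the Euler coefficients into exponential growth of the Betti numbers via the collapse. This is precisely how the paper proceeds (cf.\ the proof of Proposition~\ref{growth_bettinumbers}, which handles the relative case $(\overline{\mathcal{L}}_m^d, \mathcal{K}^{\times m})$ by the same method). So the approach is correct and is the intended one.

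One caveat, which you flag under ``bookkeeping'' but which deserves to be stated precisely: the quantity you call $\chi_n = \sum_{q-p=n}(-1)^p\dim E^2_{p,q}$ is the \emph{antidiagonal} Euler characteristic, indexed by the total degree $n = q-p$ of the abutment. That is the quantity for which $|\chi_n|\le b_n$ holds and to which your triangle-inequality argument applies. The series $\chi(E_1)[x]$ of \cite{komawila12}, as recalled in Section~\ref{section5}, is indexed by $q$ alone, i.e.\ $\chi'_q = \sum_p(-1)^p\dim E^1_{p,q}$, and its coefficients lie in degrees that are multiples of $d-1$. These two generating functions need not literally coincide under a naive change of variable, because the nontrivial bidegrees of the $E^2$ page fill a region and not a single line. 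The passage from the growth rate of $\chi'_q$ (in $q$) to the growth rate of $b_n$ (in $n = q-p$) is exactly what the paper delegates to \cite[Proposition 4.5]{komawila12}. So: either invoke that proposition at this step (as the paper does), or carry out the shift $q\mapsto q-p$ explicitly and verify it does not increase the radius of convergence; the triangle inequality alone does not close the gap unless one has already matched the two series. Apart from this, your argument is sound, including the observation that exponential growth of the Betti numbers is forced only when the bound $(1/m)^{1/(d-1)}$ is strictly less than $1$, i.e.\ for $m \geq 2$.
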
 

An  immediate consequence of Theorem~\ref{thm3} is the following corollary. 

\begin{coro} \label{coro4}
For $d \geq 4$ the radius of convergence of the Poincar\'e series for $\overline{\mathcal{L}}_m^d$ tends to $0$ as $m$ goes to $\infty$.
\end{coro}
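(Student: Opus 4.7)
The statement is a direct asymptotic consequence of Theorem~\ref{thm3}, so my plan is essentially to invoke that bound and take a limit; there is no additional structural argument needed.

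First, I would recall from Theorem~\ref{thm3} that for every integer $m \geq 1$ and every $d > 5$, the radius of convergence $R_m(d)$ of the Poincaré series of $\overline{\mathcal{L}}_m^d$ satisfies
$$0 \leq R_m(d) \leq \left(\frac{1}{m}\right)^{\frac{1}{d-1}}.$$
The left-hand inequality is automatic from the definition of the radius of convergence of a power series with non-negative coefficients, and the right-hand one is precisely the content of Theorem~\ref{thm3}.

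Next I would fix $d > 5$ and observe that the exponent $\frac{1}{d-1}$ is a strictly positive real number (indeed $\frac{1}{d-1} \in (0, \frac{1}{5})$). Hence the real-valued sequence $m \mapsto m^{-1/(d-1)}$ is monotone decreasing to $0$ as $m \to \infty$. Combining this with the two-sided bound above, the squeeze principle yields $\lim_{m \to \infty} R_m(d) = 0$, which is exactly the claim of Corollary~\ref{coro4}.

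No serious obstacle is expected: the entire content of the corollary is packaged inside Theorem~\ref{thm3}, and the hypothesis $d > 5$ is inherited verbatim. The only thing to double-check is that the bound in Theorem~\ref{thm3} holds uniformly in $m$ for fixed $d$, which is clear from its statement. Thus the proof reduces to a one-line elementary limit computation.
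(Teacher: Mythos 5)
Your proof is correct and is exactly the argument the paper intends — the paper simply says the corollary is an immediate consequence of Theorem~\ref{thm3}, and your squeeze argument ($0 \leq R_m(d) \leq m^{-1/(d-1)} \to 0$ for fixed $d>5$) spells out precisely that deduction.
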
 

When $m=1$ the upper bound of Theorem~\ref{thm3} is equal to $1$, and  the following theorem, due to Turchin, gives a better upper bound in that case. 

\begin{thm} \emph{\cite{turchin10}} \label{turchin10_thm}
For $d \geq 4$ the radius of convergence \index{Poincare@Poincar\'e!radius of convergence} of the Poincar\'e series \index{Poincare@Poincar\'e!series} for the space of long knots (modulo immersions) is less than or equal to $(\frac{1}{\sqrt{2}})^{\frac{1}{d-1}}$.
\end{thm}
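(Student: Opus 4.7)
The plan is to extract a lower bound on the Betti numbers $b_n=\dim_{\mathbb{Q}}H_n(\overline{\mathcal{L}}_m^d;\mathbb{Q})$ from the estimate of Komawila-Lambrechts by invoking the collapsing Theorem~\ref{thm2}. The key idea is that once the Bousfield-Kan spectral sequence is known to collapse at $E^2$ rationally, the Euler series studied in \cite{komawila12} and the Poincar\'e series we want to bound are controlled by essentially the same data, and the triangle inequality upgrades the exponential growth of the former into an exponential lower bound for the latter.

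By Theorem~\ref{thm2} the rational homology Bousfield-Kan spectral sequence associated to $\mathcal{K}_d^{m\bullet}$ collapses at $E^2$, and since there are no extension problems over $\mathbb{Q}$, for each $n$ one has $b_n=\sum_p d_{p,n}$, where $d_{p,n}$ denotes the dimension of the $E^2$-page in the bidegree of cosimplicial degree $p$ contributing to total degree $n$. The Euler series of the $E^2$-page considered by Komawila-Lambrechts has as its $n$-th coefficient $\chi_n=\sum_p(-1)^p d_{p,n}$, and this coincides with the corresponding Euler series at any later page because Euler characteristics at each fixed total degree are preserved by the differentials. The triangle inequality then yields
$$|\chi_n|\leq\sum_p d_{p,n}=b_n,$$
so combining with the Komawila-Lambrechts estimate $\limsup_n|\chi_n|^{1/n}=m^{1/(d-1)}$ gives $\limsup_n b_n^{1/n}\geq m^{1/(d-1)}$. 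By Cauchy-Hadamard the radius of convergence of the Poincar\'e series is at most $(1/m)^{1/(d-1)}$, and the exponential growth of the Betti numbers is an immediate corollary, so the theorem follows.

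The only point that requires care is aligning the bigrading conventions so that the ``Euler series'' of \cite{komawila12} agrees, coefficient by coefficient in total degree, with the signed sum $\sum_p(-1)^p d_{p,n}$ above. Since the Euler characteristic at each fixed total degree is preserved by passage to later pages of the spectral sequence, this is a short bookkeeping step, and no further hard input beyond Theorem~\ref{thm2} and the growth estimate of \cite{komawila12} is needed; the substantive work has already been done in Theorem~\ref{thm2}, and the derivation of Theorem~\ref{thm3} is a short quantitative consequence.
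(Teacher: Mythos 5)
You have proved the wrong theorem. The statement in question is Theorem~\ref{turchin10_thm}, a result of Turchin about the space of \emph{long knots} (modulo immersions) for $d\geq 4$, asserting the bound $\left(\frac{1}{\sqrt{2}}\right)^{1/(d-1)}$ on the radius of convergence. Your argument instead establishes (a version of) Theorem~\ref{thm3} about the space of long \emph{links} $\overline{\mathcal{L}}_m^d$, with bound $\left(\frac{1}{m}\right)^{1/(d-1)}$ for $d>5$. These are different results, and your method cannot recover Turchin's bound. The paper itself flags this distinction immediately before the statement: ``When $m=1$ the upper bound of Theorem~\ref{thm3} is equal to $1$, and the following theorem, due to Turchin, gives a better upper bound in that case.''

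Concretely, two things break if you try to run your argument for the knot case $m=1$. First, the Komawila--Lambrechts Euler series specializes to $\frac{1}{1-x^{d-1}}$, whose coefficients are $0$ or $1$, so the triangle-inequality step only yields $\limsup_n b_n^{1/n}\geq 1$, i.e.\ radius $\leq 1$; the constant $\sqrt{2}$ does not appear anywhere in this pipeline. Second, your input Theorem~\ref{thm2} requires $d>5$, whereas Theorem~\ref{turchin10_thm} is stated for $d\geq 4$. The actual source of the $\sqrt{2}$ is Turchin's own analysis in \cite{turchin10} (cf.\ Theorem 17.1 there), which extracts an exponentially growing sub-piece of the homology of long knots via a Hodge-type decomposition; that is a genuinely different and finer input than the Euler-characteristic comparison you use. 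In short: your reasoning is essentially the paper's proof of Theorem~\ref{thm3}, but Theorem~\ref{turchin10_thm} is a cited external result whose proof lies outside the methods of this paper.
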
 

Since the space of $m$ copies of long knots is a retraction up to homotopy of the space of long links, Theorem~\ref{turchin10_thm} implies that the radius of convergence of the Poincar\'e series for $\overline{\mathcal{L}}_m^d$ is less than or equal to $(\frac{1}{\sqrt{2}})^{\frac{1}{d-1}}$. Our Corollary~\ref{coro4} furnishes a better upper bound for $m$ large.

\textbf{Outline of the paper} 
\begin{enumerate}
\item[-] In Section~\ref{section2} we set up some results that we will use to prove Theorem~\ref{thm1} and Theorem~\ref{thm2}. First of all, we define a manifold $M$, and we show that (see Proposition~\ref{equivalence_prop} below) the study of the space of long links (modulo immersions) is reduced to the study of the space $\overline{\mbox{Emb}}_c(M, \mathbb{R}^d)$ of compactly supported embeddings of $M$ into $\mathbb{R}^d$.  Next we recall some results, related to the Taylor tower associated to $\overline{\mbox{Emb}}_c(\mathbb{R}^n, \mathbb{R}^d)$, obtained by Arone and Turchin in  \cite{aro_tur12}. Finally we show that similar results (Proposition~\ref{topo2_prop}, Proposition~\ref{algbr2_prop}, Proposition~\ref{prop1-1} and Proposition~\ref{prop2}) hold for the space  $\overline{\mbox{Emb}}_c(M, \mathbb{R}^d)$.

\item[-] In Section~\ref{section3} we construct an explicit cosimplicial chain complex $L_*^{\bullet}$ that gives a cosimplicial model for the singular chain complex $C_*(\overline{\mbox{Emb}}_c(M, \mathbb{R}^d); \mathbb{Q})$ (Theorem~\ref{thm1}).  To prove Theorem~\ref{thm1}, we use all the results from Section~\ref{sous_section2}, and also Proposition~\ref{iso_prop}, Lemma~\ref{equi_lem} and Theorem~\ref{equi2_lem}.

\item[-] In Section~\ref{section4} we prove Theorem~\ref{thm2}. The plan of the proof is as follows. First we build a simplicial model for the wedge of $m$ copies of the circle.  Next we  prove (Lemma~\ref{equality_lem}) that the $E^1$ pages of spectral sequences $\{E^r(L_*^{\bullet})\}_{r \geq 0}$ and $\{E^r(C_*(\mathcal{K}_d^{m\bullet}; \mathbb{Q}))\}_{r \geq 0}$ are isomorphic. Using now the fact that the spectral sequence $\{E^r(L_*^{\bullet})\}_{r \geq 0}$ collapses at the $E^2$ page (Lemma~\ref{collapses_lem}) and Theorem~\ref{thm1},  we  deduce Theorem~\ref{thm2}. 

\item[-] In Section~\ref{section4'} we show that the spectral sequence computing the rational homology of the high dimensional analogues of spaces of long links collapses at the $E^2$ page.

\item[-] In Section~\ref{section5} we show that the radius of convergence of  the Poincar\'e series for the space of long links modulo $m$ copies of the space of long knots tends to zero as $m$ goes to the infinity.  This result is obtained from Theorem~\ref{thm2} and a theorem of Komawila-Lambrechts \cite{komawila12}. 
\end{enumerate}

\textbf{Acknowledgements} 
I am grateful to my advisor Pascal Lambrechts  for his outstanding guidance on this work and also for his encouragement. I also thank Gregory Arone for his valuable help with  Proposition~\ref{equivalence_prop} (which is one of the central observations in this paper) and for helpful conversations (by emails) that we have had during the preparation of  the first draft of this paper. Obviously, I cannot forget to thank Victor Turchin for his explanation on  infinitesimal bimodules and for suggesting the statement of Proposition~\ref{iso_prop}.

\section{A compactly supported version of Goodwillie-Weiss embedding calculus for the space of long links} \label{section2}

We introduce this section with Proposition~\ref{equivalence_prop}, which allows us to reduce the study of the space $\overline{\mathcal{L}}_m^d$ of long links to the study of the space $\overline{\mbox{Emb}}_c(M, \mathbb{R}^d)$ of compactly supported embeddings of some manifold  $M$ into $\mathbb{R}^d$. Before stating that proposition, we will properly define  $M$, and the spaces $\overline{\mathcal{L}}_m^d$ and $\overline{\mbox{Emb}}_c(M, \mathbb{R}^d)$. The ground field in this section is $\mathbb{Q}$. 

Roughly speaking $M$ is the complement in $\mathbb{R}^2$ of a slightly thickening of $m+1$ copies of the interval $I=[-1, 1]$. To be more precise, let  $\{a_0, a_1, \cdots, a_m\} \subseteq I$ be the family of points defined by 
$$a_i=\frac{2i-m-1}{m+1}.$$ 
Let $0 <\epsilon < \frac{2}{m+1}$ be a fixed real number. For $0 \leq i \leq m$, define 
\[ K_i =  \left\{ \begin{array}{lll}
                   I \times [a_i, a_i+\epsilon] & \mbox{if} & 0 \leq i \leq m-1 \\
									 I \times [a_m, 1]   & \mbox{if} & i = m. 
                  \end{array} \right.       \]

\begin{defn} \label{defofm}
Let $M$ be the complement of $K = \cup_{i=0}^m K_i$ in $\mathbb{R}^2$. That is,
\begin{eqnarray}\label{manifold_eq}
M=\mathbb{R}^2 \backslash \cup_{i=0}^m K_i.
\end{eqnarray}
\end{defn}

We now define the spaces $\overline{\mathcal{L}}_m^d$ and $\overline{\mbox{Emb}}_c(M, \mathbb{R}^d)$. Let $\eta \colon \rbb^2 \hookrightarrow \rdbb$ be a fixed linear embedding defined by $\eta (t,x) = (0, \cdots, 0, t, x)$. Define $\mbox{Emb}_c(M, \rdbb)$ to be the space  of smooth embeddings $f \colon M \hookrightarrow \rdbb$ such that $f(t,x) = \eta (t,x)$ for all $(t,x) \notin ]-1,1[ \times ]-1,1[$. This space is equipped with the weak $C^{\infty}$-topology. Define also the space $\mbox{Emb}_c(\coprod_{i=1}^m \rbb, \rdbb)$ of smooth embeddings $f \colon \rbb \times \{b_1, \cdots, b_m\} \hookrightarrow \rdbb$ such that $f_i (t) = \eta (t, b_i)$ for all $|t| \geq 1$ and for all $1 \leq i \leq m$. Here $b_i = \frac{a_{i-1}+ \epsilon+a_i}{2}$ and $f_i$ is the restriction of $f$ to $\rbb \times \{b_i\}$. Similarly, we have the spaces $\mbox{Imm}_c(M, \rdbb)$ and $\mbox{Imm}_c(\coprod_{i=1}^m \rbb, \rdbb)$ of compactly supported immersions. By definitions, there are inclusions 
\[ \mbox{Emb}_c(M, \rdbb) \hookrightarrow \mbox{Imm}_c(M, \rdbb) \qquad \mbox{and} \qquad \mbox{Emb}_c(\coprod_{i=1}^m \rbb, \rdbb) \hookrightarrow \mbox{Imm}_c(\coprod_{i=1}^m \rbb, \rdbb).\]     
Let $l \in \mbox{Imm}_c(M, \rdbb)$ be a fixed immersion, and let $\tilde{l} \in \mbox{Imm}_c(\coprod_{i=1}^m \rbb, \rdbb)$ be the restriction of $l$ to $\coprod_{i=1}^m \rbb \times \{b_i\}$. 

\begin{defn} \label{definition_spaces}
\begin{enumerate}
\item[(i)]  The space $\overline{\emph{Emb}}_c(M, \rdbb)$ is the homotopy pullback of the diagram 
$$\{l\} \hookrightarrow \emph{Imm}_c(M, \rdbb) \hookleftarrow \emph{Emb}_c(M, \rdbb).$$ 
\item[(ii)] The space $\overline{\emph{Emb}}_c(\coprod_{i=1}^m \rbb, \rdbb)$ or more simply $\overline{\mathcal{L}}_m^d$ is the homotopy pullback of the diagram 
 \[\{\tilde{l}\} \hookrightarrow \emph{Imm}_c(\coprod_{i=1}^m \rbb, \rdbb) \hookleftarrow \emph{Emb}_c(\coprod_{i=1}^m \rbb, \rdbb).\]
\end{enumerate}
\end{defn}

The following proposition is a central observation in this paper. 

\begin{prop} \label{equivalence_prop}
For $d \geq 3$, the space of long links modulo immersions in $\mathbb{R}^d$ is weakly equivalent to the space of smooth compactly supported embeddings (modulo immersions) of $M$ in $\mathbb{R}^d$. That is,
$$\overline{\mathcal{L}}_m^d \simeq \overline{\emph{Emb}}_c(M, \mathbb{R}^d) .$$
\end{prop}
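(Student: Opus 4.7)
The plan is to build a natural restriction map $\rho \colon \overline{\mbox{Emb}}_c(M, \mathbb{R}^d) \to \overline{\mathcal{L}}_m^d$ and prove it is a weak equivalence by comparing fibres.

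First I would choose, for $i = 1, \dots, m$, a real number $b_i \in (a_{i-1}+\epsilon, a_i)$ lying strictly between the two consecutive strips $K_{i-1}$ and $K_i$, and take $\iota \colon \coprod_{i=1}^m \mathbb{R} \hookrightarrow M$ to send the $i$-th copy to the horizontal line $\mathbb{R} \times \{b_i\} \subset M$. After arranging the standard embedding of $\coprod_{i=1}^m \mathbb{R}$ into $\mathbb{R}^d$ and the standard embedding $M \subset \mathbb{R}^2 \subset \mathbb{R}^d$ to be compatible with $\iota$, restriction along $\iota$ induces a commutative square whose two horizontal arrows are the maps $\mbox{Emb}_c(M, \mathbb{R}^d) \to \mbox{Emb}_c(\coprod_{i=1}^m \mathbb{R}, \mathbb{R}^d)$ and $\mbox{Imm}_c(M, \mathbb{R}^d) \to \mbox{Imm}_c(\coprod_{i=1}^m \mathbb{R}, \mathbb{R}^d)$. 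Passing to homotopy fibres of the vertical inclusions of embeddings into immersions yields the map $\rho$.

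Next I would analyse the two horizontal restriction maps via the isotopy-extension theorem, which makes both into quasi-fibrations (since $\iota$ is a proper embedding). Their fibres over a long link $\phi$ are the spaces $\mbox{Ext}_E(\phi)$ and $\mbox{Ext}_I(\phi)$ of compactly-supported extensions of $\phi$ to, respectively, an embedding or an immersion of $M$ into $\mathbb{R}^d$. A diagram chase on the two long exact sequences of homotopy groups then shows that $\rho$ is a weak equivalence provided the natural inclusion $\mbox{Ext}_E(\phi) \hookrightarrow \mbox{Ext}_I(\phi)$ is a weak equivalence for every long link $\phi$.

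The main obstacle I expect is exactly this last step. Since $M$ deformation retracts onto a $1$-complex whose spine consists of the $m$ midlines $\iota(\coprod_{i=1}^m \mathbb{R})$ together with a finite family of arcs and loops living in the complement of these midlines, the extension problem reduces, up to contractible choice, to extending $\phi$ along finitely many $1$-cells whose boundary is already embedded in $\mathbb{R}^d$. In codimension $d - 2 \geq 1$ (i.e.\ for $d \geq 3$) any self-intersection of such a $1$-cell can be removed by a transverse perturbation supported away from $\iota(\coprod_{i=1}^m \mathbb{R})$, and the parameter space of such perturbations is contractible by a general-position argument. Making this precise---either by a direct general-position construction or by invoking a relative version of Goodwillie--Weiss embedding calculus applied to the restriction along $\iota$---yields the fibre-level equivalence, from which the weak equivalence $\rho$ follows.
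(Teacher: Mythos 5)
Your route is genuinely different from the paper's. The paper argues directly: after replacing $\coprod_1^m \mathbb{R}$ by $\coprod_1^m I$ with boundary conditions in the cube, it observes that the part of $M$ inside $I^2$ decomposes into $m$ gap strips (the bands between consecutive $K_i$'s), each weakly equivalent to $I$, and asserts that this suffices. You instead set up a restriction fibration via isotopy extension along $\iota$ and reduce to a fibrewise statement, which is a cleaner and more structured way of organizing the argument. Both proofs, however, hinge on the same nontrivial fact, which neither really justifies in full for small $d$: that replacing a $2$-dimensional band by its $1$-dimensional spine does not change $\overline{\mbox{Emb}}_c(-,\mathbb{R}^d)$.

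The gap in your last step is real and, as you suspected, is exactly where the difficulty lives. The reduction from ``extend $\phi$ to an embedding of the $2$-manifold $M$'' to ``extend $\phi$ along finitely many $1$-cells'' is not a contractible choice; it involves a choice of thickening of the spine and a normal framing, which is additional data. More seriously, your general-position claim is pitched at the wrong dimension. What must be embedded is the $2$-manifold $M$, not its $1$-skeleton, and the self-intersection locus of a $2$-manifold in $\mathbb{R}^d$ is generically of dimension $4-d$: nonempty $1$-dimensional for $d=3$ and $0$-dimensional for $d=4$. So ``codimension $d-2 \geq 1$'' does not make the self-intersections of $M$ disappear by transversality for $d=3,4$, and the statement $\mbox{Ext}_E(\phi) \hookrightarrow \mbox{Ext}_I(\phi)$ being a weak equivalence cannot be obtained from a naive general-position argument in that range; only for $d\geq 5$ is the double-point set generically empty. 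To push your argument through for $d=3,4$ you would need to justify the spine reduction (e.g.\ by a compatibility between a handle decomposition of $M$ built from the $1$-handles along $\iota$ and a Haefliger-style $h$-principle for the thickening, or, as you suggest, by a relative form of embedding calculus applied along $\iota$). As it stands, the proposal has the right skeleton but not the decisive ingredient for low codimension; this matches the fact that the paper's own proof of this proposition is also essentially a sketch that defers the same point.
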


\begin{proof}
 For $1 \leq i \leq m$, set $A_i = I \times ]a_{i-1}+ \epsilon, a_i[$, and define  $\mbox{Emb}_c( \coprod_{i=1}^m A_i, I^d)$ to be the space of smooth embeddings $f \colon \coprod_{i=1}^m A_i \hookrightarrow I^d$ satisfying the boundary conditions
\begin{enumerate}
\item[-] $f(-1, x) = \eta (-1,x)$ and $f(1, x) = \eta (1, x)$ for all $x \in ]a_{i-1}+ \epsilon, a_i[, 1 \leq i \leq m$;
\item[-] $T_{f(-1, x)} f(A_i) = T_{f(-1,x)} \eta (A_i)$ and $T_{f(1, x)} f(A_i) = T_{f(1,x)} \eta (A_i)$ for all $1 \leq i \leq m$ and for all $x \in ]a_{i-1}+\epsilon, a_i[$. Here $T_p X$ denotes the tangent space of $X$ at $p$. 
\end{enumerate}
Define also $\mbox{Emb}_c (\coprod_{i=1}^m I, I^d)$ to be the space  of smooth embeddings $f \colon \coprod_{i=1}^m I \times \{b_i\} \hookrightarrow I^d$ satisfying similar boundary conditions as above. That is, the endpoints of $f$ and the tangent vectors at those endpoints are given by $\eta$. We define similarly the spaces $\mbox{Imm}_c( \coprod_{i=1}^m A_i, I^d)$ and $\mbox{Imm}_c( \coprod_{i=1}^m I, I^d)$. As in Definition~\ref{definition_spaces}, we have the spaces $\overline{\mbox{Emb}}_c(\coprod_{i=1}^m A_i, I^d)$ and 
$\overline{\mbox{Emb}}_c(\coprod_{i=1}^m I, I^d)$ (the fixed immersions here are respectively the restrictions of $l$ to $\coprod_{i=1}^m A_i$ and to $\coprod_{i=1}^m I \times \{b_i\}$). From the definitions, one can easily see the weak equivalences 
\[\overline{\mathcal{L}}_m^d \simeq \overline{\mbox{Emb}}_c(\coprod_{i=1}^m I, I^d) \qquad \mbox{and} \qquad  \overline{\mbox{Emb}}_c(M, \mathbb{R}^d) \simeq \overline{\mbox{Emb}}_c(\coprod_{i=1}^m A_i, I^d).\]
To end the proof, it suffices to show that there is a weak equivalence 
\begin{eqnarray} \label{weak_equivalence_emb}
 \overline{\mbox{Emb}}_c(\coprod_{i=1}^m I, I^d) \simeq \overline{\mbox{Emb}}_c(\coprod_{i=1}^m A_i, I^d). 
 \end{eqnarray}
Notice that the spaces $\mbox{Emb}_c(\coprod_{i=1}^m A_i, I^d)$ and $\mbox{Emb}_c(\coprod_{i=1}^m I, I^d)$  are not weakly equivalent because of the following reason. For an element $f$ in the first space, we have the tangent space (which is a $2$-dimensional vector space) at each point $f(t, x)$, while at each point $g(t)$, where $g$ is the restriction of $f$ to $\coprod_{i=1}^m I \times \{b_i\}$, of the second space we have a $1$-dimensional vector space as the tangent space. For the homotopy fibers $\overline{\mbox{Emb}}_c(\coprod_{i=1}^m I, I^d)$  and $\overline{\mbox{Emb}}_c(\coprod_{i=1}^m A_i, I^d)$, these tangential informations disappear because of the prescribed immersion $l$. We thus obtain (\ref{weak_equivalence_emb}).  
\end{proof}

\begin{rmq} \label{weak_equiv_rmk} The idea of Proposition~\ref{equivalence_prop} (which consits in studying  $\embthm$ instead of $\overline{\mathcal{L}}_m^d$) can be generalized to the study of the rational homology of the space $\overline{\emph{Emb}}_c(\coprod_{i=1}^m \rbb^{n_i}, \rdbb)$ of high string links (here the integers $n_i$ are not necessarily the same). For example, in the case $n_i = n$ for all $i$, which will be studied in Section~\ref{section4'}, one can define a submanifold $M_n \subseteq \rbb^{n+1}$ by  
\begin{eqnarray}\label{manifoldn_eq}
M_n =\mathbb{R}^{n+1} \backslash \cup_{i=0}^m K_{in},
\end{eqnarray}
where 
\[ K_{in} =  \left\{ \begin{array}{lll}
                   I^n \times [a_i, a_i+\epsilon] & \mbox{if} & 0 \leq i \leq m-1 \\
									 I^n \times [a_m, 1]   & \mbox{if} & i = m. 
                  \end{array} \right.       \]
As a generalization of Proposition~\ref{equivalence_prop}, one can prove the following weak equivalence
\begin{eqnarray} \label{weak_equiv_mn} 
\overline{\emph{Emb}}_c(\coprod_{i=1}^m \rbb^n, \rdbb) \simeq \overline{\emph{Emb}}_c(M_n, \rdbb).
\end{eqnarray}									
\end{rmq}

The advantage to work with $\overline{\mbox{Emb}}_c(M, \mathbb{R}^d)$ instead of $\overline{\mathcal{L}}_m^d$ is the fact that one can use the same techniques, which were developed by Arone and Turchin \cite{aro_tur12} in the study of the space $\overline{\mbox{Emb}}_c(\mathbb{R}^n, \mathbb{R}^d)$. They show that the $k$th  approximation of the Taylor tower associated to  the chain complex $C_*(\overline{\mbox{Emb}}_c(\mathbb{R}^n, \mathbb{R}^d))$, that is the   Taylor tower of the functor $V \longmapsto C_*(\overline{\mbox{Emb}}_c(V, \mathbb{R}^d))$, can be expressed in terms of morphisms between infinitesimal bimodules over the operad $C_*(B_n)$. The goal of this section is to show that we obtain similar results (for the Taylor tower of $\overline{\mbox{Emb}}_c(M, \mathbb{R}^d)$ ) as them. To state and prove our results, it is easiest to first review what is done in \cite{aro_tur12}.

\subsection{Review of the Taylor tower associated to $\overline{\mbox{Emb}}_c(\mathbb{R}^n, \mathbb{R}^d)$ } \label{sous-section2}

Here we recall some results of \cite{aro_tur12}. 

Let $\mathcal{O}(\mathbb{R}^n)$ be the poset of open subsets of $\mathbb{R}^n$. Define $\widetilde{\mathcal{O}}(\mathbb{R}^n) \subseteq \mathcal{O}(\mathbb{R}^n)$ to be the subcategory of open subsets whose complement is bounded.  Define also the category $\widetilde{\mathcal{O}}_k(\mathbb{R}^n)$ to be the subcategory of $\widetilde{\mathcal{O}}(\mathbb{R}^n)$ consisting of disjoint unions $U=U_0 \cup U_1$ such that $U_0$ is the complement of a closed ball, and $U_1$ is the disjoint union of at most $k$ open balls in $\mathbb{R}^n$. Assume that there is an inclusion $\rbb^n \hookrightarrow \rdbb$.  Then one may define a contravariant functor 
$$\overline{\mbox{Emb}}_c(-, \mathbb{R}^d) \colon \widetilde{\mathcal{O}}(\mathbb{R}^n) \longrightarrow \mbox{Top}.$$
Taking that functor as an input in a "compactly supported" version of  Goodwillie-Weiss embedding calculus \cite{wei99, good_weiss99}, one obtains Proposition~\ref{topo_prop} below, which states that the $k$th approximation  $T_k\overline{\mbox{Emb}}_c(\mathbb{R}^n, \mathbb{R}^d)$ can be expressed as the space of maps between infinitesimal bimodules over the little $n$-disks operad $B_n$. Before stating that proposition, we recall the definition of an \textit{infinitesimal bimodule} from \cite[Definition 3.8]{aro_tur12} or from \cite[Definition 4.1]{turchin10}. Recall also the following notations. By $\underset{\mathcal{O}}{\mbox{InfBim}}$, we denote the category of infinitesimal bimodules over an operad $\mathcal{O}$, and by $\underset{\mathcal{O}}{\mbox{InfBim}_{\leq k}}$ we denote its $k$th truncation. If $\mathcal{B}_1$ and $\mathcal{B}_2$ are two infinitesimal bimodules over $\mathcal{O}$, we denote by $\underset{\mathcal{O}}{\mbox{hInfBim}}(\mathcal{B}_1, \mathcal{B}_2)$ the derived object of infinitesimal bimodules morphisms from $\mathcal{B}_1$ to $\mathcal{B}_2$.  We also recall (\cite[Section 5]{aro_tur12}) that by $\mbox{sEmb}(-, \mathbb{R}^d)$, we denote the functor of standard embeddings.

\begin{prop} (\emph{\cite[Theorem 6.10]{aro_tur12}} \label{topo_prop} \emph{or} \emph{\cite[Theorem 7.1]{turchin12}}) \label{thm510}
For $ d > n$ and $k \leq \infty$, we have the weak  equivalences
$$\begin{array}{lll}
T_k\overline{\emph{Emb}}_c(\mathbb{R}^n, \mathbb{R}^d)  & \simeq & \underset{B_n}{\emph{hInfBim}_{\leq k}} (\emph{sEmb}(-, \mathbb{R}^n), B_d) \\
	                                               & \simeq & \underset{B_n}{\emph{hInfBim}_{\leq k}} (B_n, B_d).
																								 																
 \end{array}
$$
 \end{prop}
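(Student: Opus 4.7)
The plan is to deduce this statement by running a ``compactly supported'' version of Goodwillie--Weiss embedding calculus on the functor
$$F \colon \widetilde{\mathcal{O}}(\mathbb{R}^n) \longrightarrow \mbox{Top}, \qquad U \longmapsto \overline{\mbox{Emb}}_c(U, \mathbb{R}^d),$$
and then to identify the resulting homotopy limit as a mapping space of infinitesimal bimodules. First I would observe that, by the defining property of the polynomial approximations for functors defined on $\widetilde{\mathcal{O}}(\mathbb{R}^n)$, one has
$$T_k F(\mathbb{R}^n) \simeq \underset{U \in \widetilde{\mathcal{O}}_k(\mathbb{R}^n)}{\mbox{holim}}\; \overline{\mbox{Emb}}_c(U, \mathbb{R}^d),$$
so the problem reduces to understanding the values of $F$ on objects of $\widetilde{\mathcal{O}}_k(\mathbb{R}^n)$ and the morphisms between them.

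Next I would analyze an object $U = U_0 \cup U_1 \in \widetilde{\mathcal{O}}_k(\mathbb{R}^n)$, where $U_0$ is the complement of a closed ball and $U_1 \cong \coprod_{j} B$ is a disjoint union of $j \leq k$ open balls. A compactly supported embedding of $U$ in $\mathbb{R}^d$ is forced to be standard on the unbounded component $U_0$, so up to homotopy it is determined by the restriction to the $j$ small balls. The ``modulo immersions'' quotient then replaces the space of such embeddings by the corresponding derivative, which (for $d > n$) is weakly equivalent to $B_d(j)$. In this way the diagram $U \mapsto F(U)$ is identified, up to weak equivalence, with the infinitesimal bimodule $B_d$ of configuration spaces, indexed by the truncation $\widetilde{\mathcal{O}}_k(\mathbb{R}^n)$.

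Then I would compare the indexing category $\widetilde{\mathcal{O}}_k(\mathbb{R}^n)$ with the (truncated) category encoding $k$-truncated infinitesimal bimodules over the operad of standard embeddings $\mbox{sEmb}(-,\mathbb{R}^n)$. The morphisms in $\widetilde{\mathcal{O}}_k(\mathbb{R}^n)$ are generated by two types: nesting several small balls into a single small ball (which encodes the right operadic action), and absorbing a small ball into the complement-of-ball component $U_0$ (which encodes the left infinitesimal action). These two types of generators match exactly the two structure maps of an infinitesimal bimodule, which yields
$$\underset{U \in \widetilde{\mathcal{O}}_k(\mathbb{R}^n)}{\mbox{holim}}\; F(U) \simeq \underset{\mbox{sEmb}(-,\mathbb{R}^n)}{\mbox{hInfBim}_{\leq k}}\bigl(\mbox{sEmb}(-,\mathbb{R}^n), B_d\bigr),$$
giving the first weak equivalence. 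The second equivalence follows from the weak equivalence of operads $\mbox{sEmb}(-,\mathbb{R}^n) \simeq B_n$ and invariance of the derived mapping space of infinitesimal bimodules under weak equivalences of operads, applied to both the operad acting and the source infinitesimal bimodule.

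The main obstacle will be the second step, namely the identification $\overline{\mbox{Emb}}_c(U, \mathbb{R}^d) \simeq B_d(j)$ compatibly with the whole diagram: one must handle the behavior at infinity on the complement-of-ball component $U_0$ and check that the ``modulo immersions'' replacement is natural enough with respect to the two classes of generating morphisms. This is where the hypothesis $d > n$ is genuinely used, both for convergence of the Goodwillie--Weiss tower and to ensure that the derivative computation really gives configuration spaces without correction terms.
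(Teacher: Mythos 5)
The paper does not itself prove Proposition~\ref{topo_prop}; it is stated as a recall, with the proof delegated to Arone--Turchin \cite{aro_tur12}, Turchin \cite{turchin12}, and (for a detailed treatment) Boavida de Brito--Weiss \cite{boavida_weiss}. Your reconstruction follows the same strategy those references use: identify $T_k\overline{\mbox{Emb}}_c(\mathbb{R}^n,\mathbb{R}^d)$ with a homotopy limit over $\widetilde{\mathcal{O}}_k(\mathbb{R}^n)$, identify the values of the diagram with configuration-space data, and translate the indexing poset into the truncated infinitesimal-bimodule category with its two classes of generating morphisms (nesting balls into a ball, versus a ball escaping into the complement-of-ball component). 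One slip worth flagging: a compactly supported embedding is standard only outside a compact set, not on all of the unbounded component $U_0$, so $\overline{\mbox{Emb}}_c(U,\mathbb{R}^d)$ is not literally ``determined by the restriction to the small balls''; rather, the ``modulo immersions'' device ensures that the $U_0$-factor of $\overline{\mbox{Emb}}_c$ is contractible, and it is this contractibility (plus naturality over the morphisms of $\widetilde{\mathcal{O}}_k$) that makes the identification with $B_d(j)$ go through. You correctly identify this as the delicate step, and it is exactly the point the cited references work out in detail.
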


Notice that  a version of Proposition~\ref{thm510} was proved \cite{boavida_weiss} by Boavida de Brito and Weiss (they develop the details of the proof of that proposition). Notice also that  Proposition~\ref{topo_prop} admits an algebraic version obtained by 
 considering the functor
$$C_* \overline{\mbox{Emb}}_c(-, \mathbb{R}^d) \colon \widetilde{\mathcal{O}}(\mathbb{R}^n) \longrightarrow \mbox{Ch}_*$$
in which  $C_*(-)$ means the normalized singular chain complex functor. 

\begin{prop}\emph{\cite[Proposition 6.13]{aro_tur12}} \label{algbr_prop}
For $d > n$ and $k \leq \infty$ there are weak equivalences

\begin{align}
T_kC_*\overline{\emph{Emb}}_c(\mathbb{R}^n, \mathbb{R}^d)  & \simeq  \underset{C_*B_n}{\emph{hInfBim}_{\leq k}} (C_*\emph{sEmb}(-, \mathbb{R}^n), C_*B_d) \\
                                                            & \simeq  \underset{C_*B_n}{\emph{hInfBim}_{\leq k}} (C_*B_n, C_*B_d). \label{infbim_over_cbn}
\end{align}

%\begin{eqnarray} \label{infbim_over_cbn}
%\begin{array}{lll}
%T_kC_*\overline{\emph{Emb}}_c(\mathbb{R}^n, \mathbb{R}^d)  & \simeq & \underset{C_*B_n}{\emph{hInfBim}_{\leq k}} (C_*\emph{sEmb}(-, \mathbb{R}^n), C_*B_d) \\
	%                                               & \simeq & \underset{C_*B_n}{\emph{hInfBim}_{\leq k}} (C_*B_n, C_*B_d).
																								 																
 %\end{array}
%\end{eqnarray}
\end{prop}

One can express (\ref{infbim_over_cbn}) in terms of morphisms between infinitesimal bimodules over the commutative operad $\mbox{Com} = H_0(B_d)$. More precisely, one has  Proposition~\ref{combining_prop} (below) in which $C_* (S^{n-})$ is viewed as an infinitesimal bimodule over Com as follows. First of all, $S^n$ is the $n$-dimensional sphere viewed as the one-point compactification of $\mathbb{R}^n$, that is, $S^n=\mathbb{R}^n \cup \{\infty\}$ with $\infty$ as the base point. Therefore, by the second part of Example~\ref{right_Gamma_module_expl}, and by Lemma~\ref{equi_lem} it follows that 
\begin{eqnarray} \label{infbim_sn}
C_*(S^{n-}) =  \{ C_*(S^{nk}) \}_{k \geq 0} =  \left\{C_*(\underbrace{S^n \times \cdots \times S^n}_{k})\right\}_{k \geq 0}
\end{eqnarray}
is an infinitesimal bimodule over Com. Also, by the first part of Example~\ref{right_Gamma_module_expl}, and by Lemma~\ref{equi_lem}, the operad $H_*(B_d)$ is an infinitesimal bimodule over Com. 

In the following proposition, the first weak equivalence (which essentially comes from the relative formality theorem \cite[Theorem 1.4]{lam_vol} of the inclusion of operads $B_n \hookrightarrow B_d$) is proved in \cite[Proposition 7.1]{aro_tur12}, and the second one in \cite[Proposition 8.3]{aro_tur12}. Notice that Proposition~\ref{combining_prop} was first proved by Arone and Turchin \cite{aro_tur12} for $d \geq 2n+1$. Actually the codimension condition, $d \geq 2n+1$ coming from the relative formality theorem of Lambrechts-Voli\'c,  was recently improved by Turchin-Willwacher (see \cite{tur_wil14}).  They show, by a more careful analysis of Lambrechts-Voli\'c construction, that the relative formality theorem also holds for $d \geq n+2$. 

\begin{prop}\emph{\cite[Proposition 7.1 and Proposition 8.3]{aro_tur12}} \label{combining_prop} For $d \geq n+2$ and $k \leq \infty$, we have the following weak equivalences
\begin{enumerate}
\item[$\bullet$] $T_kC_*\overline{\emph{Emb}}_c(\mathbb{R}^n, \mathbb{R}^d) \simeq \underset{C_*B_n}{\emph{hInfBim}_{\leq k}} (C_*B_n, H_*(B_d; \mathbb{Q})$
\item[$\bullet$] $T_kC_*\overline{\emph{Emb}}_c(\mathbb{R}^n, \mathbb{R}^d) \simeq \underset{\emph{Com}}{\emph{hInfBim}_{\leq k}} (C_*(S^{n-}), H_*(B_d; \mathbb{Q}).$
\end{enumerate}
\end{prop}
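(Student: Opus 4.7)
The strategy is to derive both equivalences from Proposition~\ref{algbr_prop}, which already supplies the identification
$$T_kC_*\overline{\mbox{Emb}}_c(\mathbb{R}^n, \mathbb{R}^d) \simeq \underset{C_*B_n}{\mbox{hInfBim}_{\leq k}}(C_*B_n, C_*B_d),$$
by replacing the target $C_*B_d$ and, in the second step, also shifting the ambient operad.

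The first equivalence follows by feeding into the second slot the relative formality theorem of Lambrechts-Voli\'c: for $d \geq 2n+1$, the operad map $C_*B_n \to C_*B_d$ is linked, through a zig-zag of quasi-isomorphisms of operad maps under $C_*B_n$, to the formality map $C_*B_n \to H_*(B_d;\mathbb{Q})$. Since $\underset{C_*B_n}{\mbox{hInfBim}_{\leq k}}(C_*B_n,-)$ sends quasi-isomorphisms of $C_*B_n$-infinitesimal bimodules to weak equivalences, transporting the target through this zig-zag yields the first claim.

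For the second equivalence, I would use a change-of-operad argument along the augmentation $C_*B_n \to \mbox{Com}$. The collection $k \mapsto (S^n)^k = (\mathbb{R}^n \cup \{\infty\})^k$ carries a natural structure of Com-infinitesimal bimodule, with right operations duplicating coordinates and left infinitesimal operations inserting the basepoint $\infty$ at a selected slot; taking singular chains produces the cofibrant Com-infinitesimal bimodule $C_*(S^{n-})$. The plan is to exhibit a natural map of $C_*B_n$-infinitesimal bimodules $C_*(S^{n-}) \to C_*B_n$ that becomes a quasi-isomorphism after passing to derived hom into $H_*(B_d;\mathbb{Q})$, and then to invoke the restriction-induction adjunction between infinitesimal bimodules over $C_*B_n$ and over Com, using that $H_*(B_d;\mathbb{Q})$ itself inherits a natural Com-infinitesimal bimodule structure that restricts correctly. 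Combined with the first equivalence, this would give
$$\underset{C_*B_n}{\mbox{hInfBim}_{\leq k}}(C_*B_n, H_*(B_d;\mathbb{Q})) \simeq \underset{\mbox{Com}}{\mbox{hInfBim}_{\leq k}}(C_*(S^{n-}), H_*(B_d;\mathbb{Q})).$$

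The main obstacle is justifying this last identification, i.e.\ that $C_*(S^{n-})$ genuinely represents $C_*B_n$ in the derived sense: the structure maps pulling points into $\infty$ must be matched with the formal resolution coming from the Koszul-duality pair $(\mbox{Com}, B_n)$. In practice this seems to require a concrete geometric model, such as the Fulton-MacPherson / Axelrod-Singer compactification of configuration spaces, and is the technical content of the corresponding statements in \cite{aro_tur12}.
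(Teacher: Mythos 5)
Note first that the paper itself offers no proof of this proposition: it is quoted verbatim as a citation of Propositions~6.1 and~7.3 of Arone--Turchin~\cite{aro_tur12}, and the sentence immediately preceding it tells you which reference proves which bullet. So the only thing to compare your sketch against is the argument in~\cite{aro_tur12}. Your account of the first bullet is on target: that equivalence is obtained by feeding the Lambrechts--Voli\'c relative formality zig-zag for $B_n\hookrightarrow B_d$ into the target slot of Proposition~\ref{algbr_prop}, after checking that the resulting $C_*B_n$-infinitesimal bimodule structure on $H_*(B_d;\mathbb{Q})$ is the expected one. And the conceptual ingredients you name for the second bullet --- the $\mathrm{Com}$-infinitesimal bimodule $S^{n-}$, the change-of-operad adjunction along $C_*B_n\to\mathrm{Com}$ (which is available here because $H_*(B_n)\to H_*(B_d)$ factors through $H_0(B_n)=\mathrm{Com}$ when $d>n\ge 2$), and the Fulton--MacPherson compactification as the geometric engine --- are indeed the ones at play in Section~7 of~\cite{aro_tur12}.

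However, the concrete pivot you propose does not work: there is no natural map of $C_*B_n$-infinitesimal bimodules $C_*(S^{n-})\to C_*B_n$. The map that nature provides runs the opposite way, $\mathrm{Conf}(k,\mathbb{R}^n)\to(\mathbb{R}^n)^k\hookrightarrow(S^n)^k$, i.e.\ $C_*B_n\to C_*(S^{n-})$, and even that map is not a quasi-isomorphism (configuration spaces carry nontrivial higher homology). Arone--Turchin avoid a direct bimodule comparison of this kind altogether: they use the induction--restriction Quillen adjunction along $C_*B_n\to\mathrm{Com}$ to rewrite $\underset{C_*B_n}{\mbox{hInfBim}_{\le k}}(C_*B_n, H_*(B_d;\mathbb{Q}))$ as $\underset{\mathrm{Com}}{\mbox{hInfBim}_{\le k}}(\mathrm{Ind}^{\mathrm{L}}(C_*B_n), H_*(B_d;\mathbb{Q}))$, and then show that the derived induction $\mathrm{Ind}^{\mathrm{L}}(C_*B_n)$ is weakly equivalent, \emph{as a} $\mathrm{Com}$\emph{-infinitesimal bimodule}, to $C_*(S^{n-})$; that last identification is exactly the hard, Fulton--MacPherson/Koszul-duality computation you correctly flag as the technical heart. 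So the gap in your proposal is not merely a detail to be filled in: the map you place at the center of the argument does not exist, and the actual argument is built around the adjunction and a resolution computation rather than around any such map.
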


%\begin{prop}\emph{\cite[Proposition 3.9]{aro_tur12}} \label{equicat_prop}
%Let $\mathcal{C}$ be a closed symmetric monoidal category. Let $\mathcal{O}$ be an operad in $\mathcal{C}$. For a category $\mathcal{D}$ enriched, tensored and cotensored over $\mathcal{C}$, the category of contravariant functors from $\widetilde{\Gamma}(\mathcal{O})$ to $\mathcal{D}$ is equivalent to the category of infinitesimal bimodules over $\mathcal{O}$ with values in $\mathcal{D}$.
%\end{prop}

\subsection{The Taylor tower associated to $\overline{\mbox{Emb}}_c(M, \mathbb{R}^d)$} \label{sous_section2}

The goal here is to show that similar results as those mentioned in Section~\ref{sous-section2} hold for the space $\overline{\mbox{Emb}}_c(M, \mathbb{R}^d)$, where $M$ is the manifold from Definition~\ref{defofm}.  We will prove them in a more general context. That is, instead of looking at $\overline{\mbox{Emb}}_c(M, \mathbb{R}^d)$, we will look at the space $\overline{\mbox{Emb}}_c(N, \mathbb{R}^d)$ in which $N$ denotes the complement of a compact subset of $\mathbb{R}^n$. Further in Section~\ref{section3}, we will apply (in order to proof Theorem~\ref{thm1}) the results of this section by taking $N=M$.

Let $K \subseteq [-1,1]^n$ be a compact subset with a finite number (greater than or equal to two) of connected components. Define $N$ to be the complement of  $K$ in $\rbb^n$. Define also $\overline{\mbox{Emb}}_c(N, \rdbb)$ in the same way as the space $\overline{\mbox{Emb}}_c(M, \rdbb)$ from Definition~\ref{definition_spaces}. Here, the fixed immersion is a linear embedding 
\[\eta \colon \rbb^n \hookrightarrow \rdbb, \ \mbox{with} \ \eta (x_1, \cdots, x_n) = (0, \cdots, 0, x_1, \cdots, x_n). \] 
Let $\mathcal{O}(N)$ be the category whose objects are open subsets of $N$ and morphisms are inclusions. In the general theory of Goodwillie-Weiss, to study the space $\mbox{Emb}(N, \rdbb)$ of embeddings of $N$ inside $\rdbb$, we usually use the category $\mathcal{O}(N)$ as the source category for the functor $\mbox{Emb}(-, \rdbb)$. Here we want to study the space $\overline{\mbox{Emb}}_c(N, \rdbb)$ of compactly supported embeddings (modulo immersions) of $N$ in $\rdbb$. So we need to define a suitable source category, and a suitable filtration of it.   

\begin{defn}
\begin{enumerate}
\item[$\bullet$]  Define $\widetilde{\mathcal{O}}(N)  \subseteq  \mathcal{O}(N)$ to be the subcategory of  open subsets  whose the complement in $N$ is bounded.
\item[$\bullet$]  Define $\widetilde{\mathcal{O}}_k(N)$ to be the subcategory of $\widetilde{\mathcal{O}}(N)$ consisting of $U=V \cup W$ such that
     \begin{enumerate}
		      \item $V \cap W=\emptyset$;
					\item $V$ is the complement in $\rbb^n$ of a closed ball containing $K$ in $\rbb^n$;
					\item $W$ is the disjoint union of at most $k$ open balls.
		 \end{enumerate}
\end{enumerate} 

\end{defn}

Recalling  that   we have  fixed a linear embedding  $\eta \colon \rbb^n \hookrightarrow \rdbb$, one may define the contravariant functors 
$$\overline{\mbox{Emb}}_c(-, \mathbb{R}^d) \colon \widetilde{\mathcal{O}}(N) \longrightarrow \mbox{Top} \qquad  \mbox{and} \qquad  C_*\overline{\mbox{Emb}}_c(-, \mathbb{R}^d) \colon \widetilde{\mathcal{O}}(N) \longrightarrow \mbox{Ch}_*.$$
Taking these functors as inputs in Goodwillie-Weiss embedding calculus, we have the following two propositions, which are proved in the similar way as  Proposition~\ref{topo_prop} and Proposition~\ref{algbr_prop} respectively. 

\begin{prop} \label{topo2_prop}
For $d > n$ and $k \leq \infty$ there is a weak equivalence
$$ T_k\overline{\emph{Emb}}_c(N, \mathbb{R}^d)   \simeq  \underset{B_n}{\emph{hInfBim}_{\leq k}} (\emph{sEmb}(-, N), B_d).$$
\end{prop}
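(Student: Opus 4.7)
The plan is to transport the argument Arone and Turchin use for Proposition~\ref{topo_prop} to the setting where the source manifold is $N$ rather than $\mathbb{R}^n$. Since the target $\mathbb{R}^d$ is unchanged and the operad acting downstairs is still $B_n$ (the manifold $N$ is $n$-dimensional), only the combinatorics on the source side need adaptation, and the source modification is captured precisely by replacing $\mbox{sEmb}(-, \mathbb{R}^n)$ by $\mbox{sEmb}(-, N)$.

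First I would invoke the compactly supported version of Goodwillie-Weiss embedding calculus applied to the functor
$$\overline{\mbox{Emb}}_c(-, \mathbb{R}^d) \colon \widetilde{\mathcal{O}}(N) \longrightarrow \mbox{Top},$$
which realizes the $k$-th polynomial approximation as a homotopy limit
$$T_k \overline{\mbox{Emb}}_c(N, \mathbb{R}^d) \simeq \underset{U \in \widetilde{\mathcal{O}}_k(N)}{\mbox{holim}}\, \overline{\mbox{Emb}}_c(U, \mathbb{R}^d).$$
The good-cover condition making this tower polynomial of degree $\le k$ is built into the definition of $\widetilde{\mathcal{O}}_k(N)$ exactly as in the $\mathbb{R}^n$ case. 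Next I would identify the above diagram with a cobar-type model for derived infinitesimal bimodule morphisms. For $U = V \cup W$ with $V$ the complement of a closed ball and $W$ a disjoint union of at most $k$ open balls, the homotopy type of $\overline{\mbox{Emb}}_c(U, \mathbb{R}^d)$ is controlled by standard embeddings on $V$ and by the little $d$-disks evaluated on the components of $W$. The structure operations available on such $U$ in $\widetilde{\mathcal{O}}_k(N)$ -- shrinking a ball inside another (operadic composition in $B_d$), shrinking a ball toward the outer piece (the infinitesimal action on the module), and restricting to the outer piece (the action of $\mbox{sEmb}(-, N)$) -- match exactly the structure maps turning the assembly into an infinitesimal bimodule morphism $\mbox{sEmb}(-, N) \to B_d$ over $B_n$.

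The main obstacle is the rigorous identification of this homotopy limit with $\underset{B_n}{\mbox{hInfBim}_{\leq k}}(\mbox{sEmb}(-, N), B_d)$. In the $\mathbb{R}^n$ case Arone and Turchin carry this out by passing through a Sinha-type compactification and comparing the topological indexing category $\widetilde{\mathcal{O}}_k(\mathbb{R}^n)$ with the combinatorial category parametrizing $k$-truncated infinitesimal bimodule data; see \cite[Theorem~5.10]{aro_tur12}. The point to emphasize is that the ambient manifold enters this comparison only through the evaluation of the diagram on the "outer" complement-of-ball pieces, i.e.\ through $\mbox{sEmb}(-, N)$, so once this evaluation is correctly accounted for, the comparison argument transports verbatim. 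Finally, the case $k = \infty$ follows from convergence of the Taylor tower for compactly supported embeddings when $d > n$, which is the standard Goodwillie-Weiss hypothesis and is the reason for the codimension assumption in the statement.
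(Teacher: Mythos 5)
Your proposal is correct and follows essentially the same route as the paper: the paper's entire proof is a one-line remark that the argument of \cite[Theorem 5.10]{aro_tur12} (Proposition~\ref{topo_prop}) transports verbatim after replacing $\mathbb{R}^n$ by $N$, which is precisely the transport of argument you spell out. The extra detail you supply on the good-cover category $\widetilde{\mathcal{O}}_k(N)$ and the identification of the three structure maps is a faithful unpacking of what that reference actually does.
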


%\begin{proof} The proof works exactly in the similar way as that of the first weak equivalence of Proposition~\ref{topo_prop}.
%\end{proof}

\begin{prop} \label{algbr2_prop}
For $d > n$ and $k \leq \infty$ there is a weak equivalence
$$ T_k C_*\overline{\emph{Emb}}_c(N, \mathbb{R}^d)   \simeq  \underset{C_*B_n}{\emph{hInfBim}_{\leq k}} (C_*\emph{sEmb}(-, N), C_*B_d).$$
\end{prop}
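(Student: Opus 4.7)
The plan is to mirror verbatim the passage from Proposition~\ref{topo_prop} to Proposition~\ref{algbr_prop} carried out by Arone--Turchin, but with the source manifold $\mathbb{R}^n$ replaced by $N$. The starting point is the already available topological statement, Proposition~\ref{topo2_prop}, which identifies $T_k\overline{\mbox{Emb}}_c(N, \mathbb{R}^d)$ with $\underset{B_n}{\mbox{hInfBim}_{\leq k}}(\mbox{sEmb}(-, N), B_d)$. The desired algebraic statement will be obtained by composing this equivalence with the normalized singular chain functor $C_*$, and checking that $C_*$ transports the topological derived hom of infinitesimal bimodules to its chain-level analogue.

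More precisely, I would first run the compactly supported Goodwillie--Weiss calculus with the chain-valued functor $C_*\overline{\mbox{Emb}}_c(-, \mathbb{R}^d)\colon \widetilde{\mcalo}(N)\to \mbox{Ch}_*$. Since the $k$-th Taylor polynomial is a homotopy limit over the finite category $\widetilde{\mcalo}_k(N)$ evaluated on disjoint unions $U=V\cup W$, and since on such $U$ the space $\overline{\mbox{Emb}}_c(U, \mathbb{R}^d)$ is homotopy equivalent to a product of little-disks type configuration spaces, Künneth/Eilenberg--Zilber allows me to identify $C_*$ applied to the cube with a cube of chain-level objects assembled out of $C_*B_d$ and $C_*\mbox{sEmb}(-, N)$. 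Taking homotopy limits, and using that $C_*$ commutes with homotopy limits of cofibrant-enough polynomial diagrams, I obtain the asserted equivalence
\[
T_k C_*\overline{\mbox{Emb}}_c(N, \mathbb{R}^d) \simeq \underset{C_*B_n}{\mbox{hInfBim}_{\leq k}}(C_*\mbox{sEmb}(-, N), C_*B_d).
\]

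The main obstacle will be the usual one: verifying that the lax monoidal chain functor $C_*$ really does intertwine the derived hom of topological infinitesimal bimodules with the derived hom of their chain-level counterparts. Concretely, this requires checking that the cosimplicial (bar-type) resolution used to model $\mbox{hInfBim}$ on the topological side is levelwise good enough that $C_*$ commutes with its totalization, and that at each cosimplicial level the Eilenberg--Zilber map is a quasi-isomorphism thanks to the finite disjoint unions appearing in the Goodwillie--Weiss cubes. All of these checks are carried out by Arone--Turchin in the proof of their Proposition~5.13; none of them uses anything specific about $N=\mathbb{R}^n$ beyond the existence of the operadic structure on $\mbox{sEmb}(-, N)$, which is available for any open $N$. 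Hence their argument transfers without modification and yields the proposition.
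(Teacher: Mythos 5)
Your proposal matches the paper's one-line proof, which simply transfers Arone--Turchin's argument for Proposition~\ref{algbr_prop} with $\mathbb{R}^n$ replaced by $N$; the technical checks you outline (Eilenberg--Zilber on the finite cubical stages, compatibility of $C_*$ with the relevant homotopy limits) are exactly the content carried by that citation, and none of it depends on $N=\mathbb{R}^n$. One small caution: your opening sketch of composing the equivalence of Proposition~\ref{topo2_prop} with $C_*$ would produce $C_*T_k$ rather than the desired $T_kC_*$ (not a priori the same), so the route you adopt in your second paragraph --- running Goodwillie--Weiss calculus directly on the chain-valued functor --- is the necessary one, as you yourself implicitly recognize.
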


%\begin{proof} Here the proof also  works exactly in the similar way as that of the first equivalence of Proposition~\ref{algbr_prop}.
%\end{proof}

%\begin{rmq}  In Proposition~\ref{topo2_prop} we don't have the equivalence 
%$$\underset{B_n}{\emph{hInfBim}_{\leq k}} (\emph{sEmb}(-, N), B_d) \simeq \underset{B_n}{\emph{hInfBim}_{\leq k}} (B_n, B_d)$$ 
%as in Proposition~\ref{topo_prop}. This is because $\emph{sEmb}(-, N)$ is not weakly equivalent to $B_n$ for general $N$. Obviously, the same remark holds for Proposition~\ref{algbr2_prop}.
%\end{rmq}

%We will continue this section with $N=M$ (the open submanifold of $\mathbb{R}^2$ defined above) since we are interessed  in the study of $\overline{\mbox{Emb}}_c(M, \mathbb{R}^d)$. The following corollary is then an immediate consequence of Proposition~\ref{algbr2_prop}.

%\begin{coro} \label{prop1} For $d > 2$ there is a quasi-isomorphism
%$$T_{\infty}C_*(\overline{\emph{Emb}}_c(M, \mathbb{R}^d); \mathbb{Q}) \simeq \underset{C_*{B_2}}{\emph{hInfBim}}(C_*(\emph{sEmb}(-, M); \mathbb{Q}), C_*(B_d; \mathbb{Q})).$$
%\end{coro}

Applying now the relative formality theorem, which says that for $d \geq n+2$ the inclusion $B_n \hookrightarrow B_d$ is $\rbb$-formal (see \cite[Theorem 1.4]{lam_vol} and  \cite[Theorem 1]{tur_wil14}), we obtain the following proposition. 
 
\begin{prop} \label{prop1-1} For $d \geq n+2$ and $k \leq \infty$, there is a weak equivalence
$$T_kC_*(\overline{\emph{Emb}}_c(N, \mathbb{R}^d); \mathbb{Q}) \simeq \underset{C_*{B_n}}{\emph{hInfBim}_{\leq k}}(C_*(\emph{sEmb}(-, N); \mathbb{Q}), H_*(B_d; \mathbb{Q})).$$
\end{prop}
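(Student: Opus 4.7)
The plan is to deduce this statement from Proposition~\ref{algbr2_prop} by invoking the relative formality theorem of Lambrechts-Voli\'c, in exactly the same fashion as Arone-Turchin deduce their Proposition~\ref{combining_prop} from their Proposition~\ref{algbr_prop}. Starting from the weak equivalence (with rational coefficients)
$$T_kC_*(\overline{\mbox{Emb}}_c(N, \mathbb{R}^d);\mathbb{Q}) \simeq \underset{C_*B_n}{\mbox{hInfBim}_{\leq k}}(C_*\mbox{sEmb}(-, N), C_*(B_d;\mathbb{Q})),$$
the task is reduced to showing that replacing the target $C_*(B_d;\mathbb{Q})$ by $H_*(B_d;\mathbb{Q})$ preserves the derived object of infinitesimal bimodule morphisms over $C_*B_n$.

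The key input is \cite[Theorem 1.4]{lam_vol}: for $d \geq 2n+1$ the operad map $B_n \hookrightarrow B_d$ is rationally formal, which means that there is a zigzag of quasi-isomorphisms of operad maps connecting $C_*(B_n;\mathbb{Q}) \to C_*(B_d;\mathbb{Q})$ to its homology $H_*(B_n;\mathbb{Q}) \to H_*(B_d;\mathbb{Q})$. In particular, along this zigzag, $H_*(B_d;\mathbb{Q})$ acquires the structure of an infinitesimal bimodule over $C_*(B_n;\mathbb{Q})$, and $C_*(B_d;\mathbb{Q})$ and $H_*(B_d;\mathbb{Q})$ become quasi-isomorphic as infinitesimal bimodules over $C_*(B_n;\mathbb{Q})$.

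The last step is the invariance of the derived mapping object $\underset{C_*B_n}{\mbox{hInfBim}_{\leq k}}(-,-)$ under quasi-isomorphism of the target: after replacing $C_*\mbox{sEmb}(-,N)$ by a cofibrant resolution in the (truncated) category of infinitesimal bimodules over $C_*B_n$, a quasi-isomorphism in the target induces a weak equivalence of derived hom objects. Applied to the zigzag above, this yields the claimed equivalence.

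The main obstacle is really bookkeeping rather than a new idea: one must be confident that the derived hom of infinitesimal bimodules behaves well under restriction of scalars along a zigzag of quasi-isomorphisms of operads, so that the zigzag from $C_*B_n \to C_*B_d$ to $H_*B_n \to H_*B_d$ indeed translates into a zigzag of weak equivalences between the two derived mapping objects. This is a standard homotopical-algebra fact for the model structure on infinitesimal bimodules used by Arone-Turchin, and the same argument is already invoked (without detailed comment) in the passage from Proposition~\ref{algbr_prop} to Proposition~\ref{combining_prop} in \cite{aro_tur12}; the present proposition is simply the analogue for a general complement $N$ of a compact subset of $\mathbb{R}^n$.
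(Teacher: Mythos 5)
Your proposal follows exactly the same route as the paper: the paper's own proof is the one-line remark that it is identical to Arone--Turchin's Proposition 6.1, applied to Proposition~\ref{algbr2_prop} via the relative formality theorem of Lambrechts--Voli\'c, which is precisely the zigzag-plus-invariance argument you spell out. (You also correctly keep $C_*B_n$ as the source operad throughout; the $C_*B_2$ appearing in the paper's statement of the proposition is evidently a slip for $C_*B_n$ in the general setting of $N \subset \mathbb{R}^n$.)
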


\begin{proof} The proof is the same as that of the second assertion of Proposition 7.1 from  \cite{aro_tur12}.
\end{proof}

We end this section with a proposition, which will be a key ingredient in the proof of Theorem~\ref{thm1}. Before stating that proposition we fix some notation. Let $\widehat{N}$ be the one-point compactification of $N$. That is,  
$$\widehat{N}=N \cup \{\infty\}.$$
By $N \cup \{\infty\}$ we mean $g(N) \cup \{(0, \cdots, 0, 1)\}$, where $g \colon \rbb^n \lra S^n$ is the inverse of the stereographic projection. The space $\widehat{N}$ is a pointed topological space with  $\infty$ as the base point. As the case of $C_*(S^{n-})$ from (\ref{infbim_sn}), we have a structure of an infinitesimal bimodule over Com on $C_*(\widehat{N}^{\times -})$. 
%$$\widehat{N}^{\times k}= \underbrace{\widehat{N} \times \cdots \times \widehat{N}}_k.$$   
%The following proposition is proved in the similar  way as the second part of Proposition~\ref{combining_prop}.
 
\begin{prop}\label{prop2} For $d \geq n+2$ and $k \leq \infty$, there is a weak equivalence
$$T_kC_*(\overline{\emph{Emb}}_c(N, \mathbb{R}^d); \mathbb{Q}) \simeq \underset{\emph{Com}}{\emph{hInfBim}_{\leq k}}(C_*(\widehat{N}^{\times -}), H_*(B_d; \mathbb{Q})).$$
\end{prop}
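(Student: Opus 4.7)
The plan is to obtain Proposition~\ref{prop2} from Proposition~\ref{prop1-1} by replacing the operad $C_*B_n$ with the commutative operad $\emph{Com}$, and the infinitesimal bimodule $C_*\mbox{sEmb}(-, N)$ with $C_*(\widehat{N}^{\times_{-}})$, following the same recipe Arone and Turchin use to derive \cite[Proposition 7.3]{aro_tur12} from \cite[Proposition 6.1]{aro_tur12}.

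First I would start from the weak equivalence
$$T_kC_*(\overline{\emph{Emb}}_c(N, \mathbb{R}^d); \mathbb{Q}) \simeq \underset{C_*B_n}{\emph{hInfBim}_{\leq k}}(C_*\mbox{sEmb}(-, N), H_*(B_d; \mathbb{Q}))$$
provided by Proposition~\ref{prop1-1}. The plan is then to invoke the change-of-operads adjunction induced by the canonical augmentation $\varepsilon \colon C_*B_n \to H_0(B_n) = \emph{Com}$. The restriction functor $\varepsilon^*$ from $\emph{Com}$-infinitesimal bimodules to $C_*B_n$-infinitesimal bimodules has $H_*(B_d; \mathbb{Q})$ in its essential image, since the $C_*B_n$-structure on $H_*(B_d;\mathbb{Q})$ used on the right side of Proposition~\ref{prop1-1} factors through $\emph{Com}$ up to quasi-isomorphism (this is part of what formality buys us).

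Next I would produce a quasi-isomorphism of $C_*B_n$-infinitesimal bimodules
$$C_*\mbox{sEmb}(-, N) \simeq \varepsilon^* C_*(\widehat{N}^{\times_{-}}).$$
For $N = \mathbb{R}^n$ this is the geometric content of \cite[Proposition 7.3]{aro_tur12} and relies on a Pontryagin--Thom style collapse map: a standard embedding of $k$ little disks into $N$ is sent to the tuple of images of centers, with the convention that when a disk escapes or degenerates it is sent to the basepoint $\infty \in \widehat{N}$. For general $N$ the complement of a compact subset of $\mathbb{R}^n$, the same construction makes sense and is again an objectwise weak equivalence, and the verification that it respects both the left and right infinitesimal $C_*B_n$-actions is formally identical to the $\mathbb{R}^n$ case treated by Arone and Turchin. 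Combining the two steps through the adjunction yields the desired weak equivalence for $d \geq 2n+1$ and $k \leq \infty$.

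The main obstacle will be the careful verification that the Pontryagin--Thom collapse is compatible with both infinitesimal structures simultaneously, rather than merely being an objectwise equivalence. On the left, inserting an infinitesimal $B_n$-operation into a standard embedding should correspond, after applying the collapse, to inserting a $\emph{Com}$-operation on $\widehat{N}^{\times_{-}}$ (simply a reindexing of the tuple); on the right, the $B_n$-action by precomposition with a standard embedding should become homotopically trivial on $\widehat{N}^{\times_{-}}$ in a manner compatible with the basepoint at infinity, which precisely accounts for the failure of $\mbox{sEmb}(-,N)$ to carry a literal $\emph{Com}$-structure. Once these compatibilities are in place, everything assembles formally, using that the dimension hypothesis $d \geq 2n+1$ is inherited from Proposition~\ref{prop1-1} and ultimately from the relative formality theorem \cite[Theorem 1.4]{lam_vol}.
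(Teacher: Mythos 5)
The paper offers no proof for Proposition~\ref{prop2} beyond the one-line pointer that it ``is proved in the similar way as the second part of Proposition~\ref{combining_prop},'' i.e., by following \cite[Proposition 7.3]{aro_tur12}. Your overall architecture---start from Proposition~\ref{prop1-1}, use the formality-induced fact that the $C_*B_n$-action on $H_*(B_d;\mathbb{Q})$ factors through $\mathrm{Com}$, and invoke the change-of-operad adjunction along the augmentation $\varepsilon\colon C_*B_n\to\mathrm{Com}$---is the right skeleton and does match the Arone--Turchin strategy.

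However, there is a genuine gap in the central step: you assert an objectwise weak equivalence of $C_*B_n$-infinitesimal bimodules $C_*\mathrm{sEmb}(-, N)\simeq\varepsilon^*C_*(\widehat{N}^{\times_-})$ via the ``centers of disks'' Pontryagin--Thom map. This cannot be an objectwise equivalence. Up to homotopy, $\mathrm{sEmb}(k, N)$ is the configuration space $\mathrm{Conf}(k,N)$, while $\widehat{N}^{\times k}$ is the unconstrained $k$-fold Cartesian power of the one-point compactification; already for $N=\mathbb{R}^n$ and $k=2$ one has $\mathrm{Conf}(2,\mathbb{R}^n)\simeq S^{n-1}$, which is not weakly equivalent to $(S^n)^{\times 2}$. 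The centers map lands inside $\mathrm{Conf}(k,N)\subset\widehat{N}^{\times k}$ and is far from an equivalence for $k\geq 2$.

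What the adjunction $(\varepsilon_!,\varepsilon^*)$ actually requires is the identification $L\varepsilon_!\,C_*\mathrm{sEmb}(-, N)\simeq C_*(\widehat{N}^{\times_-})$ as $\mathrm{Com}$-infinitesimal bimodules, where $L\varepsilon_!$ is the derived induction (left Kan extension) along $\varepsilon$. This is a homotopy colimit computation, not a levelwise comparison: pushing forward the bimodule along the augmentation allows the configuration points to collide and to escape to infinity, and it is precisely this homotopy colimit that produces the Cartesian power of the compactification. That identification is the substantive geometric content of \cite[Proposition 7.3]{aro_tur12}, and your argument needs to reproduce it (or cite it with the necessary adaptation to general $N$) rather than claim an objectwise equivalence that does not hold.
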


\begin{proof}
The  proof is the same as that of \cite[Proposition 8.3]{aro_tur12}, except that here we will work with the categories $\widetilde{\mathcal{O}}(N)$ and $\widetilde{\mathcal{O}}_k(N)$ instead of the categories $\widetilde{\mathcal{O}}(\rbb^n)$ and $\widetilde{\mathcal{O}}_k(\rbb^n)$. Recall that the two latter categories were defined at the beginning of Section~\ref{sous-section2}. 

Let $g \colon B_n \lra \mbox{Com} = \{*\}_{p \geq 0}$ be the unique morphism of operads from $B_n$ to the topological commutative operad Com, and let $C_*(g) \colon C_*(B_n) \lra C_*(\mbox{Com}) = \mbox{Com}$ be the chain complex of $g$. The morphisms $g$ and $C_*(g)$ induce respectively pairs 
\begin{eqnarray} \label{res_ind1}
\widetilde{\mbox{ind}}  \colon \underset{B_n}{\mbox{hInfBim}} \rightleftarrows \underset{\mbox{Com}}{\mbox{hInfBim}}: \mbox{res}
\end{eqnarray}
and 
\begin{eqnarray} \label{res_ind2}
\widetilde{\mbox{ind}}  \colon \underset{C_*B_n}{\mbox{hInfBim}} \rightleftarrows \underset{\mbox{Com}}{\mbox{hInfBim}}: \mbox{res}
\end{eqnarray}
of adjoint functors. Here res is the restriction functor, and $\widetilde{\mbox{ind}}$ is the induction functor defined as follows. Let $\widetilde{\Gamma}(B_n)$ be the category whose object is a finite pointed set $(S, *)$, which is viewed as $|S|$ standard balls together with one standard antiball (an \textit{antiball} is defined to be the complement of a ball).   Morphisms in $\widetilde{\Gamma}(B_n)$  are standard embeddings, $\mbox{sEmb}((S, *), (T, *))$, preserving the base point (the antiball playing the role of the base point). The category $\widetilde{\Gamma}(B_n)$ is filtered by the categories $\widetilde{\Gamma}_{\leq k}(B_n)$, where objects in $\widetilde{\Gamma}_{\leq k}(B_n)$ consist of one antiball together with (at most) $k$ open balls.  It is proved (see \cite[Proposition 4.9]{aro_tur12} for a more general statement) that the category of infinitesimal bimodules over $B_n$ is equivalent to the category of contravariant functors from $\widetilde{\Gamma}(B_n)$ to $\mbox{Top}$. We will thus identify these two categories. Let $X \colon \widetilde{\Gamma}(B_n) \lra \mbox{Top}$ be an infinitesimal bimodule over $B_n$. The object $\widetilde{\mbox{ind}}(X)$ is defined to be  the homotopy left Kan extension of $X$ along $\widetilde{\Gamma}(g)$
\[ \xymatrix{\widetilde{\Gamma}(B_n) \ar[rrr]^{X} \ar[d]_{\widetilde{\Gamma}(g)} & & & \mbox{Top} \\
                 \widetilde{\Gamma}(\mbox{Com})=\Gamma, \ar@{.>}[rrru]_-{\ \ \ \ \ \ \widetilde{\mbox{ind}}(X)} & & &  }
\]
where $\Gamma$ is the category from Definition~\ref{defn_of_gamma}. Similarly, one can define the functor $\widetilde{\mbox{ind}}$ of (\ref{res_ind2}). By noticing that $\mbox{res}(H_*(B_d)) = H_*(B_d)$, and by using the adjunction (\ref{res_ind2}), we deduce (from Proposition~\ref{prop1-1}) that 
\[T_k C_*(\overline{\mbox{Emb}}_c(N, \mathbb{R}^d); \mathbb{Q}) \simeq \underset{\mbox{Com}}{\mbox{hInfBim}_{\leq k}}(\widetilde{\mbox{ind}}(C_*(\mbox{sEmb}(-, N); \mathbb{Q})), H_*(B_d; \mathbb{Q})).\]
To end the proof it suffices to show that the functors  $C_*(\widehat{N}^{\times -})$ and $\widetilde{\mbox{ind}}(C_*\mbox{sEmb}(-, N))$ are weakly equivalent as infinitesimal bimodules over Com. Since  the functor $\widetilde{\mbox{ind}}(C_*\mbox{sEmb}(-, N))$ is the homotopy colimit of a certain diagram, and since the singular chain functor $C_*(-)$ commutes with homotopy colimits, it suffices to prove that there is a weak equivalence 
\begin{eqnarray} \label{ind_to_n}
\widetilde{\mbox{ind}}(\mbox{sEmb}(-,N)) \simeq \widehat{N}^{\times -},
\end{eqnarray} 
holding in the  category of contravariant functors from $\Gamma_{\leq k}$ to Top (here $\Gamma_{\leq k} \subseteq \Gamma$ is the subcategory whose objects are pointed sets $(S,*)$ with $|S| \leq k$). The rest of the proof is devoted to (\ref{ind_to_n}).  Notice first that 
\begin{eqnarray} \label{weakeq_withu0}
\widetilde{\mbox{ind}}\ \mbox{sEmb}(-, N) \simeq \widetilde{\mbox{ind}} \left( \underset{V \in \widetilde{\mathcal{O}}_k(N)}{ \mbox{hocolim}} \ \mbox{sEmb}(-, V) \right) \simeq \underset{V \in \widetilde{\mathcal{O}}_k(N)}{ \mbox{hocolim}} \left(\widetilde{\mbox{ind}} \ \mbox{sEmb}(-, V) \right).
\end{eqnarray} 
Let $U \in \widetilde{\Gamma}_{\leq k}(B_n)$. Then, since $\mbox{sEmb}(-, U) \colon \widetilde{\Gamma}_{\leq k} (B_n) \lra \mbox{Top}$ is the free functor generated by $U$, it follows that $\widetilde{\mbox{ind}}\ \mbox{sEmb}(-, U)$ is the free functor generated by $\widetilde{\Gamma}(g)(U)$. That is,  
\begin{eqnarray} \label{weakeq_withu1}
\widetilde{\mbox{ind}}(\mbox{sEmb}(-,U)) \simeq \mbox{Map}_*(-, \widetilde{\Gamma}(g)(U)) \colon \Gamma_{\leq k} \lra \mbox{Top},
\end{eqnarray}
which is natural in $U$. Notice  that  $\mbox{Map}_*(-, \widetilde{\Gamma}(g)(U))$ is not weakly equivalent to the functor $\mbox{Map}_*(-, U)$ because the antiball of $U$ is not contractible. To correct this, let us define $\widehat{U}$ to be the one-point compactification of $U$, that is, $\widehat{U}= U \cup \{\infty\}$. Here the point $\infty$ is of course added to the antiball of $U$, and it is the base point of $\widehat{U}$. We now have the following weak equivalence
\begin{eqnarray} \label{weakeq_withu2}
\mbox{Map}_*(-, \widetilde{\Gamma}(g)(U)) \simeq \mbox{Map}_*(-, \widehat{U}),
\end{eqnarray}
which is also natural in $U$. Combining (\ref{weakeq_withu0}), (\ref{weakeq_withu1}) and (\ref{weakeq_withu2}), one has
\begin{eqnarray} \label{weakeq_withu3}
\widetilde{\mbox{ind}}\ \mbox{sEmb}(-, N) \simeq   \underset{V \in \widetilde{\mathcal{O}}_k(N)}{ \mbox{hocolim}} \left( \mbox{Map}_*(-, \widehat{V}) \right).
\end{eqnarray} 
Since the right hand side of (\ref{weakeq_withu3}) is weakly equivalent to $\mbox{Map}_*(-, \widehat{N}) = \widehat{N}^{\times -}$, the desired result follows. 
\end{proof}

\section{A cosimplicial model for the singular chain complex of the space of long links} \label{section3}

The goal of this section is to prove Theorem~\ref{thm1} announced in the introduction. Before doing that, we state some intermediate results. As in Section~\ref{section2}, the ground field in this section is $\mathbb{Q}$. 

Let us start with the definition of a \textit{right $\Gamma$-module}.

\begin{defn} \label{defn_of_gamma} 
\begin{enumerate}
\item[$\bullet$] We define $\Gamma$ to be the category of finite pointed sets, the morphisms being the maps preserving the base point.
\item[$\bullet$] A \emph{right $\Gamma$-module} is a contravariant functor from $\Gamma$ to chain complexes $\emph{Ch}_*$.
\end{enumerate}
\end{defn}

We denote by $\underset{\Gamma}{\mbox{Rmod}}$ the category of right $\Gamma$-modules. If $\mathcal{M}_1$ and $\mathcal{M}_2$ are two right $\Gamma$-modules, by $\underset{\Gamma}{\mbox{hRmod}}(\mathcal{M}_1, \mathcal{M}_2)$, we denote the derived chain complex of right $\Gamma$-modules morphisms from $\mathcal{M}_1$ to $\mathcal{M}_2$. Here  are two examples of right $\Gamma$-modules that we look at in this paper.

\begin{expl} \label{right_Gamma_module_expl}
\begin{enumerate}
\item[(i)] The homology $H_*(B_d) \colon \Gamma \longrightarrow \mbox{Ch}_*$ defined by the formula
$$H_*(B_d)(k_+)= H_*(B_d(k)),$$
where $k_+$ is a finite pointed set of cardinal $k+1$, is a right $\Gamma$-module.
\item[(ii)] Let $X$ be a pointed topological space. The singular chain complex $C_*(X^{\times -}) \colon \Gamma \lra \emph{Ch}_*$ defined by 
 $$C_*(X^{\times -})(k_+)=C_*(\underset{\Gamma}{\emph{hom}}(k_+, X)) \cong C_*(X^{\times k})$$
is a right $\Gamma$-module.
\end{enumerate}
\end{expl}

 We are now going  to define the cosimplicial chain complex $L_*^{\bullet}$ which appears in Theorem~\ref{thm1}. From now on if $X_{\bullet}$ is a simplicial set, we will denote by $X$ its geometric realization.  Let $(\vee_{i=1}^m S^1)_{\bullet}$ denote the  simplicial  model of the wedge $\vee_{i=1}^mS^1$ of $m$ copies of the circle, which has a unique $0$-simplex and exactly $m$ non degenerate $1$-simplices (see the beginning of Section~\ref{section4} for the construction of that model). This simplicial model is actually a simplicial object in $\Gamma$, where the base points are taken to be the $0$-simplex and its degeneracies. Hence, we have the functor $(\vee_{i=1}^m S^1)_{\bullet} \colon \Delta^{op} \lra \Gamma$, and we can therefore form the composite $H_*(B_d)((\vee_{i=1}^m S^1)_{\bullet}) \colon \Delta \longrightarrow \mbox{Ch}_*$, which yields a cosimplicial chain complex.  

\begin{defn} \label{cosimplicial_defn} The cosimplicial chain complex $L_*^{\bullet}$ is defined to be the composite $H_*(B_d)((\vee_{i=1}^m S^1)_{\bullet})$,
$$L_*^{\bullet}= H_*(B_d)((\vee_{i=1}^m S^1)_{\bullet}).$$
\end{defn}

The following proposition is known to specialists, but its proof is written nowhere in my knowledge. 

\begin{prop} \label{iso_prop}
Let $X_{\bullet} \colon \Delta \longrightarrow \Gamma$ be a simplicial object in the category $\Gamma$. Let $A \colon \Gamma \longrightarrow \emph{Ch}_* $ be a right $\Gamma$-module. Then there is a weak equivalence of chain complexes
\begin{eqnarray} \label{eq1_prop3}
\emph{Tot}A(X_{\bullet}) \simeq  \underset{\Gamma}{\emph{hRmod}}(C_*(\left|X_{\bullet}\right|^{\times -}), A).
\end{eqnarray}
\end{prop}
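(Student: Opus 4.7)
My plan is to construct an explicit cofibrant replacement of the right $\Gamma$-module $C_*(|X_\bullet|^{\times_-})$ directly out of the simplicial object $X_\bullet$, and then to identify morphisms from this resolution into $A$ with $\mathrm{Tot}\,A(X_\bullet)$ via Yoneda and Dold--Kan.

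For each $p\geq 0$, I would consider the representable right $\Gamma$-module $\Gamma(-,X_p)$, linearized over $\mathbb{Q}$ and placed in chain degree zero. Yoneda's lemma supplies a natural isomorphism $\underset{\Gamma}{\mbox{Rmod}}(\Gamma(-,X_p),A)\cong A(X_p)$. Letting $\mathrm{Rep}(X_\bullet)$ denote the simplicial right $\Gamma$-module $p\mapsto\Gamma(-,X_p)$, I set
$$P:=N_*\bigl(\mathrm{Rep}(X_\bullet)\bigr),$$
regarded as a non-negatively graded chain complex of right $\Gamma$-modules via the Dold--Kan normalization. Since each $\Gamma(-,X_p)$ is free, hence projective, and $P$ is bounded below, $P$ is cofibrant in the projective model structure on chain complexes of right $\Gamma$-modules. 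Assembling the Yoneda identifications across $p$ yields
$$\underset{\Gamma}{\mbox{Rmod}}(P,A)\cong\mathrm{Tot}\,A(X_\bullet),$$
the right-hand differential being the alternating sum of cofaces induced by the simplicial structure of $X_\bullet$.

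Next I would exhibit a natural quasi-isomorphism $P\simeq C_*(|X_\bullet|^{\times_-})$ of right $\Gamma$-modules. Evaluating at $k_+\in\Gamma$ gives
$$P(k_+)=N_*\bigl(\Gamma(k_+,X_\bullet)\bigr)=N_*\bigl(X_\bullet^{\times k}\bigr),$$
since a pointed map $k_+\to X_p$ is exactly a $k$-tuple of elements of $X_p$. The Eilenberg--Zilber theorem, together with the fact that geometric realization commutes with finite products, produces a natural quasi-isomorphism $N_*(X_\bullet^{\times k})\simeq C_*(|X_\bullet|^{\times k})$; this is functorial in $k_+\in\Gamma^{op}$ and so assembles into a weak equivalence of right $\Gamma$-modules $P\simeq C_*(|X_\bullet|^{\times_-})$.

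Combining the two steps, cofibrancy of $P$ yields
$$\underset{\Gamma}{\mbox{hRmod}}\bigl(C_*(|X_\bullet|^{\times_-}),A\bigr)\simeq\underset{\Gamma}{\mbox{Rmod}}(P,A)\cong\mathrm{Tot}\,A(X_\bullet),$$
which is the claimed equivalence. The main technical obstacle I expect is a careful set-up of the projective model structure on right $\Gamma$-modules in chain complexes (and its associated derived hom $\underset{\Gamma}{\mbox{hRmod}}$) so that $P$ is genuinely cofibrant and can be used as a replacement for $C_*(|X_\bullet|^{\times_-})$ when computing the derived mapping complex; once that framework is in place, the rest is formal, combining Yoneda, Dold--Kan, and Eilenberg--Zilber, each of which is functorial in the $\Gamma$-variable.
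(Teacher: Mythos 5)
Your argument is essentially the paper's own, merely rephrased: the paper builds the cofibrant replacement as $\underset{[p] \in \Delta^{op}}{\mbox{hocolim}}\,\mathbb{K}[\Gamma(-,X_p)]$ and then passes hocolim/holim across $\underset{\Gamma}{\mbox{hRmod}}$, which is exactly your $P = N_*(\mathrm{Rep}(X_\bullet))$ plus the Yoneda identification $\underset{\Gamma}{\mbox{Rmod}}(\mathbb{K}[\Gamma(-,X_p)],A)\cong A(X_p)$, and both proofs hinge on the comparison $N_*(\mathbb{K}[X_\bullet^{\times k}])\simeq C_*(\lvert X_\bullet\rvert^{\times k})$ together with the commutation of geometric realization with finite products. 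The only small imprecision is invoking Eilenberg--Zilber there: what the step really uses is the standard quasi-isomorphism between the normalized simplicial chains of a simplicial set and the singular chains of its realization (plus $\lvert X_\bullet\times X_\bullet\rvert\cong\lvert X_\bullet\rvert\times\lvert X_\bullet\rvert$, which the paper justifies by citing May's Theorem 14.3 under a countability hypothesis), not the shuffle/Alexander--Whitney machinery.
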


\begin{proof} 
We will work with a field $\mathbb{K}$ of characteristic $0$. For a set $S$ we denote by $\mathbb{K}[S]$ the vector space generated by $S$, which will be viewed as a chain complex concentrated in degree 0. \\
We begin the proof by  showing that there is an isomorphism 
\begin{eqnarray} \label{eq1_prop}
C_*\left(\left|X_{\bullet}^{\times -}\right| \right) \cong C_*(\left|X_{\bullet}\right|^{\times -}).
\end{eqnarray}
of right $\Gamma$-modules. To do that, let us consider the pair of functors
$$
\xymatrix{\Gamma  \ar@<-4pt>[rr]_{\left|X_{\bullet}^{\times -}\right|} \ar@<4pt>[rr]^{\left|X_{\bullet}\right|^{\times -}} & &  \mbox{Top.} 
           } 
$$				
It is well known \cite[Theorem 14.3]{may92}(since the simplicial set $X_{\bullet}$ is countable) that there is an isomorphism
 $$\left|X_{\bullet}\right| \times \left|X_{\bullet}\right| \stackrel{\cong}{\longleftarrow} \left|X_{\bullet} \times X_{\bullet}\right|$$
in the category of topological spaces, and  we can easily see that this isomorphism induces, for each $p_+ \in \Gamma$, an isomorphism 
$$\phi_{p_+} \colon \left|X_{\bullet}\right|^{\times p} \stackrel{\cong}{\longleftarrow} \left|X_{\bullet}^{\times p}\right|$$ 
which is natural in $p_+$. We thus get a natural isomorphism 
$\xymatrix{\phi \colon \left|X_{\bullet}\right|^{\times -} & \left|X_{\bullet}^{\times -}\right| \ar[l]_-{\cong} }$
and, therefore, the isomorphism (\ref{eq1_prop}) holds in the category of right $\Gamma$-modules. From the latter isomorphism, we deduce the following one
\begin{eqnarray}\label{eq2_prop}
\underset{\Gamma}{\mbox{hRmod}}(C_*(\left|X_{\bullet}\right|^{\times -}), A) \cong \underset{\Gamma}{\mbox{hRmod}}\left(C_* \left(\left|X_{\bullet}^{\times -}\right| \right), A \right).
\end{eqnarray}
In the second part of this proof, we are going to show, since the totalization $\mbox{Tot}(A(X_{\bullet}))$ is weakly equivalent to the homotopy limit of the $\Delta$-diagram $A(X_{\bullet})$ in chain complexes, that the right hand side of (\ref{eq2_prop}) is quasi-isomorphic to the homotopy limit of a certain $\Delta$-diagram. For the rest of this proof, the standard simplicial set $\Delta^p_{\bullet}$ will be viewed as a simplicial object in $\Gamma$, where the base point of $\Delta^p_k=\underset{\Delta}{\mbox{hom}}([k], [p])$ is taken to be the null morphism. We denote by $s\Gamma$ the category of simplicial objects in $\Gamma$, and by $N$ the Dold-Kan normalization functor.   
Let us consider the pair of contravariant functors
\begin{eqnarray} \label{eq0_prop}
\xymatrix{\Gamma  \ar@<6pt>[rrrrr]^{\underset{[p] \in \Delta^{op}}{\mbox{hocolim}}\left(\mathbb{K}[\underset{\Gamma}{\mbox{hom}}(-, X_p)]\right)} \ar@<-6pt>[rrrrr]_{C_*\left(\left|X_{\bullet}^{\times -}\right|\right)} & & & & & \mbox{Ch}_*. 
           } 
\end{eqnarray}
We want to build a natural weak equivalence between these two functors. 				
So let $r_+ \in \Gamma$ be a finite pointed set. Then the simplicial structure of $X_{\bullet}$ induces a simplicial structure on $\underset{\Gamma}{\mbox{hom}}(r_+, X_{\bullet})$. By Yoneda's lemma, we have for each $p \geq 0$ the following isomorphism
$$\underset{s\Gamma}{\mbox{hom}}(\Delta^p_{\bullet}, \underset{\Gamma}{\mbox{hom}}(r_+, X_{\bullet}))=\underset{s\Gamma}{\mbox{hom}}(\underset{\Delta}{\mbox{hom}}(\bullet, [p]), \underset{\Gamma}{\mbox{hom}}(r_+, X_{\bullet}) \cong \underset{\Gamma}{\mbox{hom}}(r_+, X_p),$$  
which implies that
\begin{eqnarray} \label{eq3_prop}
\begin{array}{lll}
\underset{[p] \in \Delta^{op}}{\mbox{hocolim}}\left(\mathbb{K}[\underset{\Gamma}{\mbox{hom}}(r_+, X_p)]\right) & \cong & \underset{[p] \in \Delta^{op}}{\mbox{hocolim}}\left(\mathbb{K}[\underset{s\Gamma}{\mbox{hom}}(\Delta^p_{\bullet}, \underset{\Gamma}{\mbox{hom}}(r_+, X_{\bullet}))]\right) \\
                & \simeq & NV_{\bullet}(r_+).
\end{array}
\end{eqnarray}
Here $V_{\bullet}(r_+)$ is the simplicial chain complex defined by $V_p(r_+)= \mathbb{K}[\underset{s\Gamma}{\mbox{hom}}(\Delta^p_{\bullet}, \underset{\Gamma}{\mbox{hom}}(r_+, X_{\bullet}))]$. Notice that the isomorphism and the weak equivalence of (\ref{eq3_prop}) are natural in $r_+$.\\
On the other hand, let $W_{\bullet}(r_+)$  be the simplicial chain complex defined by $W_p(r_+)=\mathbb{K}[\underset{\mbox{Top}}{\mbox{hom}}(\Delta^p, \left|X_{\bullet}^{\times r}\right|)]$. Then the associated chain complex is nothing other than the singular chain complex $C_*\left(\left|X_{\bullet}^{\times_r}\right|\right)$. Therefore, since the chain complex associated to a simplicial abelian group is quasi-isomorphic to its Dold-Kan normalization, there  is a natural quasi-isomorphism
\begin{eqnarray} \label{eq4_prop}
C_*\left(\left|X_{\bullet}^{\times r}\right|\right) \simeq NW_{\bullet}(r_+).
\end{eqnarray}
%If $C_*$ is the normalized singular chain complex, then the natural quasi-isomorphism (\ref{eq4_prop}) becomes an equality. 
We have just defined a pair of contravariant functors
$$\xymatrix{\Gamma \ar@<4pt>[rr]^{NV_{\bullet}} \ar@<-4pt>[rr]_{NW_{\bullet}} & & \mbox{Ch}_*.}$$
Define now $\alpha_{r_+}\colon NV_{p}(r_+) \longrightarrow NW_{p}(r_+)$ by the formula $\alpha_{r_+}(f)=\left|f\right|$, where $f \colon \Delta^{p}_{\bullet} \longrightarrow \underset{\Gamma}{\mbox{hom}(r_+, X_{\bullet})}$ is a morphism in simplicial sets. It is straightforward to check that $\alpha \colon NV_{\bullet} \longrightarrow NW_{\bullet}$ is a quasi-isomorphism natural in $r_+$. This implies (with (\ref{eq3_prop}) and (\ref{eq4_prop})) that there is a quasi-isomorphism 
\begin{eqnarray} \label{eq5_prop}
\underset{[p] \in \Delta^{op}}{\mbox{hocolim}}\left(\mathbb{K}[\underset{\Gamma}{\mbox{hom}}(-, X_p)]\right) \simeq C_*\left(\left|X_{\bullet}^{\times -}\right|\right).
\end{eqnarray}
in the category of right $\Gamma$-modules. We end the proof with the following summary
$$ \begin{array}{lll}
\underset{\Gamma}{\mbox{hRmod}}(C_*(\left|X_{\bullet}\right|^{\times -}), A) & \cong & \underset{\Gamma}{\mbox{hRmod}}\left(C_* \left(\left|X_{\bullet}^{\times -}\right| \right), A \right) \qquad \mbox{by (\ref{eq2_prop})} \\
  &\simeq & \underset{\Gamma}{\mbox{hRmod}} \left(\underset{[p] \in \Delta^{op}}{\mbox{hocolim}}\left(\mathbb{K}[\underset{\Gamma}{\mbox{hom}}(-, X_p)]\right), A \right) \qquad \mbox{by (\ref{eq5_prop})}\\
	& \simeq & \underset{[p] \in \Delta}{\mbox{holim}} \left(\underset{\Gamma}{\mbox{hRmod}} \left(\mathbb{K}[\underset{\Gamma}{\mbox{hom}}(-, X_p)], A \right) \right) \\
	& \cong & \underset{[p] \in \Delta}{\mbox{holim}}(A(X_p)) \qquad \mbox{by Yoneda's lemma} \\
	& \cong & \mbox{Tot}(A(X_{\bullet})).
	\end{array}
$$

\end{proof}

%\begin{lem}\label{iso_lem} Let $X_{\bullet}$ and $Y_{\bullet}$ be two simplicial objects in $\Gamma$ that are weakly equivalent. Let $A \colon \Gamma \longrightarrow \emph{Ch}_*$ be %a  right $\Gamma$-module. Then there is a  quasi-isomorphism
%$$\emph{Tot}A(X_{\bullet}) \simeq \emph{Tot}A(Y_{\bullet}).$$
%\end{lem}

%\begin{proof} Suppose that $X$ and $Y$ are weakly equivalent. Then there is a quasi-isomorphism
%$$ \underset{\Gamma}{\mbox{hRmod}}(C_*(X^{-}), A) \simeq \underset{\Gamma}{\mbox{hRmod}}(C_*(Y^{-}), A).$$
%Since, by Proposition~\ref{iso_prop}, there are quasi-isomorphisms
%$$\mbox{Tot}A(X_{\bullet}) \simeq  \underset{\Gamma}{\mbox{hRmod}}(C_*(X^{-}), A) \qquad \mbox{and} \qquad \mbox{Tot}A(Y_{\bullet}) \simeq \underset{\Gamma}{\mbox{hRmod}}(C_*(Y^{-}), A),$$
%the desired result follows.
%\end{proof}

Before starting the proof of Theorem~\ref{thm1}, we need to state Lemma~\ref{equi_lem} and Theorem~\ref{equi2_lem}. The first one is an immediate consequence of \cite[Propostion 4.9]{aro_tur12}.

\begin{lem} \label{equi_lem} The category of infinitesimal bimodules over the commutative operad is equivalent to the category of right $\Gamma$-modules. That is,
$$\underset{\emph{Com}}{\emph{InfBim}} \cong \underset{\Gamma}{\emph{Rmod}}.$$
\end{lem}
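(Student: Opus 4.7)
The plan is to exhibit an explicit equivalence of categories. Since the lemma is stated as an immediate specialization of \cite[Proposition 3.9]{aro_tur12} (which treats infinitesimal bimodules over general operads), the strategy is to describe the underlying functors in both directions and explain why, when the operad is Com, the two structures coincide on the nose.

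First I would define a functor $F\colon \underset{\mbox{Com}}{\mbox{InfBim}} \longrightarrow \underset{\Gamma}{\mbox{Rmod}}$ by setting $F(B)(k_+) = B(k)$ for each infinitesimal Com-bimodule $B$. To define the induced map $f^*\colon B(n) \to B(m)$ for a basepoint-preserving map $f\colon m_+ \to n_+$, I would use the fact that $f$ is determined by the ordered partition $(f^{-1}(0), f^{-1}(1), \dots, f^{-1}(n))$ of $\{1,\dots,m\}$. Since $\mbox{Com}(\ell)$ is one-dimensional for every $\ell \geq 0$ with a canonical generator, each fiber $f^{-1}(j)$ of cardinality $\ell_j$ determines a unique iterated infinitesimal operadic action by the generator of $\mbox{Com}(\ell_j)$; applying these at the appropriate positions yields $f^*$. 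Independence of the order in which the actions are applied, together with compatibility with composition in $\Gamma$, follows from the associativity and equivariance axioms for infinitesimal bimodules.

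For the inverse functor $G\colon \underset{\Gamma}{\mbox{Rmod}} \longrightarrow \underset{\mbox{Com}}{\mbox{InfBim}}$, I would set $G(A)(k) = A(k_+)$ and recover the infinitesimal operadic actions from the action of specific generating morphisms of $\Gamma$: the right infinitesimal composition at position $i$ with the generator of $\mbox{Com}(\ell)$ is read off from the morphism in $\Gamma$ that merges $\ell$ consecutive positions around $i$ into a single element, and the left infinitesimal composition is obtained similarly. The coherence relations among these generating morphisms in $\Gamma$ then translate into precisely the axioms for an infinitesimal Com-bimodule.

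The remaining verification is that $F \circ G$ and $G \circ F$ are naturally isomorphic to the respective identities; this reduces to showing that every morphism in $\Gamma$ is a composite of basepoint collapses, block merges, and symmetric group actions, and that the infinitesimal bimodule axioms correspond exactly to the relations among those generators. The main obstacle is the combinatorial bookkeeping of matching axioms with relations, but this is exactly what Proposition 3.9 of \cite{aro_tur12} accomplishes uniformly for a general operad $\mathcal{O}$. Our lemma follows immediately by specializing to $\mathcal{O} = \mbox{Com}$, since the one-dimensionality of every $\mbox{Com}(\ell)$ forces the resulting structure to be nothing more than a functor out of $\Gamma^{op}$.
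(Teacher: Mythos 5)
Your proposal is correct and takes essentially the same approach as the paper: both reduce the lemma to Proposition~3.9 of \cite{aro_tur12}, the only difference being that the paper cites it as an immediate consequence without elaboration, whereas you sketch the two functors that the cited proposition packages in its general form. Since the combinatorial verification you defer to that reference is exactly what it supplies, the arguments coincide.
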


Recalling the definition of $M_n$ from (\ref{manifoldn_eq}), one has the following result, which is proved  using  Goodwillie-Weiss techniques for embedding calculus \cite{weiss04, good_weiss99}. 

\begin{thm} \label{equi2_lem} For $d \geq 2n+2$, there is a weak equivalence 
$$C_*(\overline{\emph{Emb}}_c(M_n, \mathbb{R}^d); \mathbb{Q}) \simeq T_{\infty}C_*(\overline{\emph{Emb}}_c(M_n, \mathbb{R}^d); \mathbb{Q})$$
\end{thm}

\begin{proof}
There is a weak equivalence 
\begin{eqnarray} \label{weak_equiv}
C_*(\overline{\mbox{Emb}}_c(M_n, \mathbb{R}^d); \mathbb{Q}) \simeq T_{\infty}C_*(\overline{\mbox{Emb}}_c(M_n, \mathbb{R}^d); \mathbb{Q})
\end{eqnarray}
 for $d \geq  2 \mbox{dim}(M_n)+2 = 2n+4$. Since the important quantity regarding the source manifold is not the actual dimension, but the homotopy dimension, it follows that (\ref{weak_equiv}) also holds for $d \geq  2n + 2$ (notice that the manifold $M_n$ is essentially a (m+1)-punctured euclidean space $\rbb^{n+1}$. Therefore, its homotopy dimension is $n$). This is the relative (or boundary) stronger formulation of the excision estimates of Goodwillie-Klein. 
\end{proof}

The following remark is straightforward.

\begin{rmq} \label{wedge_rmq}
 The one-point compactification, denoted $\widehat{M} = M \cup \{\infty\}$ (for the meaning of "one-point compactification" see the paragraph just before Proposition~\ref{prop2}), of $M$ is weakly equivalent to the wedge of $m$ copies of the circle. That is,
\begin{eqnarray} \label{mwedge_circle}
\widehat{M} \simeq \vee_{i=1}^mS^1.
\end{eqnarray}
\end{rmq}

We are now ready to prove Theorem~\ref{thm1}, which states that the cosimplicial chain complex $L_*^{\bullet}$ defined above (see Definition~\ref{cosimplicial_defn}) is a cosimplicial model for the singular chain complex of the space $\overline{\mathcal{L}}_m^d$ of long links of $m$ strands in $\mathbb{R}^d$. 

\begin{proof}[\emph{\textbf{Proof of Theorem~\ref{thm1}}}] For $d \geq 4$, we have the following weak equivalences
$$\begin{array}{lll}
C_*(\overline{\mathcal{L}}_m^d) \otimes \mathbb{Q}  & \simeq & C_*(\overline{\mbox{Emb}}_c(M, \mathbb{R}^d); \mathbb{Q}) \qquad \mbox{by Proposition~\ref{equivalence_prop}} \\
	                                               & \simeq & T_{\infty}C_*(\overline{\mbox{Emb}}_c(M, \mathbb{R}^d); \mathbb{Q}) \qquad \mbox{by Theorem~\ref{equi2_lem} with $M_n = M_1 = M$} \\
																								& \simeq & \underset{C_*B_2}{\mbox{hInfBim}}(C_*\mbox{sEmb}(-, M), C_*B_d) \qquad \mbox{by Proposition~\ref{algbr2_prop} with $N=M$} \\
																								& \simeq & \underset{C_*B_2}{\mbox{hInfBim}}(C_*\mbox{sEmb}(-, M), H_*B_d) \qquad \mbox{by Proposition~\ref{prop1-1} with $N=M$} \\
																								& \simeq & \underset{\mbox{Com}}{\mbox{hInfBim}}(C_*(\widehat{M}^{\times -}), H_*B_d) \qquad \mbox{by Proposition~\ref{prop2} with $N=M$} \\
																								& \simeq & \underset{\Gamma}{\mbox{hRmod}}(C_*(\widehat{M}^{\times -}), H_*B_d) \qquad \mbox{by Lemma~\ref{equi_lem}}  \\
																								& \simeq & \underset{\Gamma}{\mbox{hRmod}}(C_*((\vee_{i=1}^m S^1)^{\times -}), H_*B_d) \qquad \mbox{by (\ref{mwedge_circle})} \\
																								& \simeq & \mbox{Tot} H_*B_d((\vee_{i=1}^m S^1)_{\bullet}) \qquad \mbox{by Proposition~\ref{iso_prop}} \\
																								& = & \mbox{Tot} L_*^{\bullet} \qquad \mbox{by Definition~\ref{cosimplicial_defn} of $L_*^{\bullet}$}.
																								 																
 \end{array}
$$
\end{proof}

\section{Collapsing of the homology Bousfield-Kan spectral sequence associated to the Munson-Voli\'c cosimplicial model} \label{section4}

The goal of this section is to prove Theorem~\ref{thm2} stated in the introduction. We begin by giving the simplicial model of the wedge of $m$ copies of the circle. Next we state and prove a crucial Lemma~\ref{equality_lem}. Finally we prove Theorem~\ref{thm2}. As in the previous sections, the ground field here is $\mathbb{Q}$. 

Let $\Delta^1_{\bullet}$ denote the simplicial model of the standard $1$-simplex $\Delta^1$, and let $\partial \Delta^1_{\bullet}$ denote its boundary.  Recall that $\Delta^1_p$ is a nondecreasing sequence of length $p+1$ on the alphabet \{0, 1\}. Define the simplicial set $S^1_{\bullet}$ to be the quotient
$$S^1_{\bullet}= \frac{\Delta^1_{\bullet}}{\partial \Delta^1_{\bullet}}.$$
It is clear that $S^1_{\bullet}$ is a simplicial model of the circle $S^1$.  Notice that each $S^1_p$ is a finite set pointed at 
$$*= \underbrace{0 \cdots 0}_{p+1} \sim \underbrace{1 \cdots 1}_{p+1},$$
and  faces and degeneracies preserve the base point. Therefore  $S^1_{\bullet}$ is a simplicial object in $\Gamma$. 
Define now the simplicial set   $(\vee_{i=1}^m S^1)_{\bullet}$ to be  the wedge of $m$ copies of the simplicial set $S^1_{\bullet}$, 
$$(\vee_{i=1}^m S^1)_{\bullet}= \vee_{i=1}^m(S^1_{\bullet}).$$
The following proposition is well known in the litterature.
\begin{prop} \label{cardinal_prop} The simplicial set $(\vee_{i=1}^m S^1)_{\bullet}$ is a simplicial model for the wedge $\vee_{i=1}^m S^1$. Moreover, for each $p \geq 0$, the finite pointed set $(\vee_{i=1}^m S^1)_{p}$ is of cardinal $mp+1$. That is,
    $$\left| (\vee_{i=1}^m S^1)_{p} \right|= mp+1.$$
\end{prop}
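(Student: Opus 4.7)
The statement has two pieces: (i) identifying the geometric realization of $(\vee_{i=1}^m S^1)_\bullet$ with the topological wedge $\vee_{i=1}^m S^1$, and (ii) computing the cardinality of $(\vee_{i=1}^m S^1)_p$. I will handle them separately, starting with (i) and then reducing (ii) to a count of nondecreasing binary sequences.

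For (i), the plan is to exploit the fact that geometric realization, as a left adjoint, preserves colimits; in particular it preserves quotients and wedges of pointed simplicial sets. Applied to $S^1_\bullet = \Delta^1_\bullet/\partial \Delta^1_\bullet$, this gives $|S^1_\bullet| \cong |\Delta^1_\bullet|/|\partial \Delta^1_\bullet| \cong \Delta^1/\partial \Delta^1 \cong S^1$. Applied once more to the wedge over the chosen $0$-simplex basepoints, it gives
$$\bigl|(\vee_{i=1}^m S^1)_\bullet\bigr| \;=\; \Bigl|\bigvee_{i=1}^m S^1_\bullet\Bigr| \;\cong\; \bigvee_{i=1}^m |S^1_\bullet| \;\cong\; \bigvee_{i=1}^m S^1,$$
which is the desired simplicial model.

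For (ii), I would first count $|S^1_p|$. By the description of $\Delta^1_\bullet$ recalled just before the proposition, $\Delta^1_p$ is the set of nondecreasing sequences of length $p+1$ in $\{0,1\}$, of which there are exactly $p+2$ (indexed by the position $0\le i\le p+1$ at which the sequence switches from $0$ to $1$). The boundary $\partial \Delta^1_p$ consists of precisely the two constant sequences $0^{p+1}$ and $1^{p+1}$, which are identified in the quotient to a single basepoint $*$. Thus $|S^1_p| = (p+2) - 2 + 1 = p+1$, of which exactly $p$ elements are non-basepoint.

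Finally, the wedge in pointed sets identifies the $m$ basepoints to one common basepoint while keeping the non-basepoint elements of each summand disjoint. Therefore
$$\bigl|(\vee_{i=1}^m S^1)_p\bigr| \;=\; m\cdot p \;+\; 1 \;=\; mp+1,$$
which completes the proof. The argument is essentially bookkeeping; the only potentially subtle point is citing the preservation of colimits (wedges, quotients) by geometric realization, which however is standard (e.g.\ \cite{may92}) and already implicit in the construction preceding the proposition.
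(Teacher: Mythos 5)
Your proof is correct and follows essentially the same route as the paper: the paper simply asserts that $|S^1_p|=p+1$ and then invokes the definition of the wedge, while you fill in the count of nondecreasing binary sequences, the two-point boundary, and the left-adjoint argument for preservation of quotients and wedges. The additional detail is accurate and consistent with what the paper leaves implicit.
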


\begin{proof} It is straightforward to check that $(\vee_{i=1}^m S^1)_{\bullet}$ is a simplicial model of $\vee_{i=1}^m S^1$.\\
Let $p \geq 0$. Since $\left| S^1_p \right|=p+1$, by the definition of the wedge, it follows that $\left| (\vee_{i=1}^m S^1)_{p} \right|= mp+1$.
\end{proof}

Recall now some notation about spectral sequences. For a cosimplicial chain complex $C_*^{\bullet}$,  the associated total complex admits a natural filtration by the cosimplicial degree.  We denote by $\{E^r(C_*^{\bullet})\}_{r \geq 0}$ the spectral sequence induced by this filtration.\\
In the rest of this paper, we will denote by $\mathcal{K}_d^{m\bullet}$ the Munson-Voli\'c cosimplicial model \cite{mun_vol09} for the space of long links of $m$ strands. Notice that $\mathcal{K}_d^{m\bullet}$ is built in the same spirit as Sinha's cosimplicial model \cite{sin06} for the space of long knots.

\begin{lem} \label{equality_lem} For $d \geq 3$, the $E^1$ pages of spectral sequences $\{E^r(L_*^{\bullet})\}_{r \geq 0}$ and $\{E^r(C_*(\mathcal{K}_d^{m\bullet}; \mathbb{Q}))\}_{r \geq 0}$ are isomorphic. That is, 
$$\{E^r(L_*^{\bullet})\}_{r =1} \cong \{E^r(C_*(\mathcal{K}_d^{m\bullet}; \mathbb{Q}))\}_{r =1}.$$ 
\end{lem}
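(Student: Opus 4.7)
The plan is to compare the two spectral sequences column by column on the $E^1$ page, and then check that the $d_1$ differentials (the alternating sum of cofaces acting on homology) agree under the column isomorphism.

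First, I identify the $E^1$ columns. For any cosimplicial chain complex $C^{\bullet}_*$, the cosimplicial-degree filtration on its total complex yields $E_1^{p,q}(C^{\bullet}_*) \cong H_q(C^p_*)$, with $d_1$ induced by the alternating sum of cofaces. For $L^{\bullet}_*$, the chain complex $L^p_* = H_*(B_d(mp);\mathbb{Q})$ has zero internal differential, so $E_1^{p,q}(L^{\bullet}_*) = H_q(B_d(mp);\mathbb{Q})$. For $C_*(\mathcal{K}_d^{m\bullet};\mathbb{Q})$, I get $E_1^{p,q} = H_q(\mathcal{K}_d^{mp};\mathbb{Q})$. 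By \cite[Theorem~5.10]{sin04} one has a weak equivalence $B_d(mp)\simeq\mathcal{K}_d^{mp}$ (combined with Proposition~\ref{cardinal_prop}, which says that $(\vee_{i=1}^mS^1)_p$ has cardinality $mp+1$), so the two $E^1$ columns are canonically isomorphic as $\mathbb{Q}$-vector spaces.

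The main content is the compatibility of $d_1$. Concretely, I need to show that for each $p$, the diagram
\begin{equation*}
\xymatrix{
H_*(B_d(mp);\mathbb{Q}) \ar[r]^-{\sum(-1)^i d^i} \ar[d]_{\cong} & H_*(B_d(m(p+1));\mathbb{Q}) \ar[d]^{\cong} \\
H_*(\mathcal{K}_d^{mp};\mathbb{Q}) \ar[r]^-{\sum(-1)^i d^i} & H_*(\mathcal{K}_d^{m(p+1)};\mathbb{Q})
}
\end{equation*}
commutes, where the top cofaces come from the simplicial structure of $(\vee_{i=1}^m S^1)_{\bullet}$ applied to the contravariant functor $H_*(B_d;\mathbb{Q})\colon\Gamma\to\mbox{Ch}_*$, and the bottom ones are the Munson--Voli\'c cofaces. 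Both $d^i$'s are of two combinatorial types: a "doubling" coface that duplicates a chosen point of the configuration, and the two "outer" cofaces that insert a point at $\infty$ (equivalently, forget a point). Under the Sinha equivalence $B_d(k)\simeq\mathcal{K}_d^k$ these operations are known to correspond: the doubling maps in $\mathcal{K}_d^{\bullet}$ are induced by the operadic degeneracies/insertions that also define the $\Gamma$-module structure on $H_*(B_d;\mathbb{Q})$ coming from its cooperadic nature. I would verify this coface-by-coface, using the explicit description of $(\vee_{i=1}^m S^1)_p$ as a pointed set indexing configurations of $mp$ points partitioned among $m$ strands: the simplicial faces/degeneracies of $(\vee_{i=1}^mS^1)_{\bullet}$ on pointed sets translate precisely into the point-insertion/point-doubling cofaces of $\mathcal{K}_d^{m\bullet}$ restricted to each strand, which is exactly how Munson--Voli\'c define their cosimplicial structure.

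The main obstacle is the bookkeeping in this last step: one must make the identification between the indexing of the $mp$ points of a configuration by the non-basepoint elements of $(\vee_{i=1}^mS^1)_p$ and the indexing used by Munson--Voli\'c, and check that doubling/insertion agree on the nose with the simplicial face/degeneracy action through the contravariant $\Gamma$-module $H_*(B_d;\mathbb{Q})$. For $m=1$ this compatibility was already carried out in \cite{lam_tur_vol10, songhaf12, moriya12} for the Sinha model $\mathcal{K}_d^{\bullet}$; the Munson--Voli\'c construction for $m$ strands is built strand-by-strand in exactly the same way, so the $m=1$ argument can be run in parallel on each of the $m$ wedge summands of $(\vee_{i=1}^mS^1)_{\bullet}$, which together account for every non-basepoint element of $(\vee_{i=1}^mS^1)_p$. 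Once the coface-by-coface matching is established, taking alternating sums gives the desired isomorphism of $E^1$ pages.
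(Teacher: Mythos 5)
Your proposal goes after the same result with the same ingredients the paper uses (Sinha's equivalence $B_d(k)\simeq\mathcal{K}_d^k$, the $\Gamma$-module/infinitesimal Com-bimodule structure of $H_*(B_d)$, and Proposition~\ref{cardinal_prop}), but it is organized less efficiently. Where you propose a coface-by-coface comparison of the two cosimplicial chain complexes and earmark the "bookkeeping" as the main obstacle, the paper avoids almost all of it by packaging the compatibility at the level of right $\Gamma$-modules: it observes that the square
$$\xymatrix{H_*(B_d) \ar[r]^{\cong} & H_*(\mathcal{K}_d) \\ H_0(B_d) \ar[u] \ar[r]^{\cong} & H_0(\mathcal{K}_d) \ar[u]}$$
commutes, so the Sinha isomorphism upgrades from a levelwise iso to an isomorphism of infinitesimal Com-bimodules, hence (by Lemma~\ref{equi_lem}) of right $\Gamma$-modules. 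Composing a $\Gamma$-module isomorphism with the simplicial $\Gamma$-object $(\vee_{i=1}^m S^1)_\bullet$ then automatically gives an isomorphism of cosimplicial chain complexes $L_*^\bullet \cong H_*(\mathcal{K}_d)((\vee_{i=1}^m S^1)_\bullet) = H_*(\mathcal{K}_d^{m\bullet})$, which is exactly the $E^1$-page claim. That framing buys you the full list of cofaces and codegeneracies for free in a single step, and it is cleaner than running the $m=1$ argument in parallel on each wedge summand. Your version is not wrong, but where you write that the operations "are known to correspond" and defer to $m=1$ references, the paper supplies the precise reason (the commuting square above) and the precise mechanism (functoriality of the composite $\Gamma$-module $\circ$ simplicial $\Gamma$-set); I would encourage you to close that gap, since it is exactly the content of the lemma rather than a routine side check.
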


Before proving Lemma~\ref{equality_lem}, we recall the Com-infinitesimal bimodules structures of $H_*(B_d)$ and $H_*(\mathcal{K}_d)$ (note that $\mathcal{K}_d$ is the Kontsevich operad, which was defined and studied in \cite[Definition 4.1 and Theorem 4.5]{sin06}). First, as in the previous sections, Com is the $0$th homology group of the little $d$-disks operad $B_d$. That is,  $\mbox{Com} = H_0(B_d)$. Here $H_0(-)$ is viewed as a chain complex concentrated in degree $0$. Next  we endow the homology $H_*(B_d)$ with the Com-infinitesimal bimodule structure induced by the obvious morphism $\mbox{Com} \longrightarrow H_*(B_d)$. The homology $H_*(\mathcal{K}_d)$ is endowed with a similar Com-infinitesimal bimodule structure since  \cite[Theorem 5.10]{sin04}, which states that  the operads $B_d$ and $\mathcal{K}_d$ are weakly equivalent, implies the existence of an isomorphism $H_*(B_d) \stackrel{\cong}{\longrightarrow} H_*(\mathcal{K}_d)$.  

%\begin{proof} Let $\mbox{Inj} \Gamma$ denote the subcategory of $\Gamma$ whose morphisms are inclusions. Consider the inclusion $i \colon \mbox{Inj}\Gamma \longrightarrow \Gamma$ and the following diagram
%$$\xymatrix{\mbox{Inj}\Gamma \ar[d]_i \ar[rr]^{H_*(B_d(m\bullet))} & & \mbox{Ch}_* \\
 %              \Gamma \ar@{..>}[rru]_{H_*(B_d(m\bullet))} & &
%							}											
%$$
%Let $H_*(\mathcal{K}_d^{m\bullet}) \colon \Gamma \longrightarrow  \mbox{Ch}_*$ be the right $\Gamma$-module defined in the similar way as $H_*(B_d(m\bullet))$. For a morphsm $f \colon P_+ \hookrightarrow q_+ $ in $\mbox{Inj}\Gamma$, it is clear that the induced maps $B_d(f) \colon B_d(mq) \longrightarrow B_d(mp)$ and $\mathcal{K}_d^{m\bullet}(f) \colon \mathcal{K}_d^{mq} \longrightarrow \mathcal{K}_d^{mp}$ both consist of forgetting some points. It follows that the natural isomorphism $H_*(B_d(m\bullet)) \cong H_*(\mathcal{K}_d^{m\bullet})$ holds in the category of contrvariant functors from $\mbox{Inj}\Gamma$ to chain complexes.  Therefore, by extension, the same isomorphism holds in the category of right $\Gamma$-modules.  This implies that the cosimplicial chain complexes $L_*^{\bullet}$ and $H_*(\mathcal{K}_d^{m\bullet})$ are isomorphics as cosimplicial objects. We thus have the desired result.
%\end{proof}

\begin{proof}[\emph{\textbf{Proof of Lemma~\ref{equality_lem}}}]
Since the diagram 
$$\xymatrix{H_*(B_d) \ar[r]^{\cong} & H_*(\mathcal{K}_d) \\
 H_0(B_d) \ar[u] \ar[r]^{\cong} & H_0(\mathcal{K}_d) \ar[u] 
}
$$
is commutative, it follows that the upper isomorphism holds in the category $\underset{\mbox{Com}}{\mbox{InfBim}}$. Therefore, since an infinitesimal bimodule over Com is the same thing as a right $\Gamma$-module (see Lemma~\ref{equi_lem}), the same isomorphism ($H_*(B_d) \cong H_*(\mathcal{K}_d)$)  holds in the category of right $\Gamma$-modules. This implies that the isomorphism 
$$\begin{array}{lll}
L_*^{\bullet}  & = & H_*(B_d)((\vee_{i=1}^m S^1)_{\bullet}) \\
               & \cong & H_*(\mathcal{K}_d)((\vee_{i=1}^m S^1)_{\bullet}) \\
							& = & H_*(\mathcal{K}_d^{m \bullet}) \qquad \mbox{by Proposition~\ref{cardinal_prop}}
\end{array}
$$
holds in the category of cosimplicial chain complexes, thus completing the proof.
\end{proof}

\begin{lem} \label{collapses_lem} For $d \geq 3$ the spectral sequence $\{E^r(L_*^{\bullet})\}_{r \geq 0}$ collapses at the $E^2$ page rationally.
\end{lem}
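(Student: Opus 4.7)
The plan is to exploit the observation, already flagged in the introduction, that the cosimplicial chain complex $L_*^{\bullet}$ is built entirely from homology groups, hence has vanishing internal differential. Once this is isolated, collapse at $E^2$ is a formal consequence of the structure of the bicomplex spectral sequence.

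First I would unpack the definitions. Recall that $L_*^p = H_*(B_d(mp); \mathbb{Q})$, and homology groups are viewed as chain complexes concentrated degreewise with \textbf{zero} internal differential. The totalization $\mbox{Tot}\, L_*^{\bullet} = \bigoplus_{p\ge 0} s^{-p} L_*^p$ carries a total differential $d = d_v + (-1)^{\bullet} d_h$, where $d_v$ is the internal (chain) differential of each $L_*^p$ and $d_h$ is the alternating sum of cofaces $\sum_i (-1)^i d^i$. The spectral sequence $\{E^r(L_*^{\bullet})\}$ is the one arising from the filtration of $\mbox{Tot}\, L_*^{\bullet}$ by cosimplicial degree.

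Next I would run the standard bicomplex spectral sequence machinery on this data. With the cosimplicial filtration, the $E^0$-page is the associated graded with only the vertical differential $d_v$ surviving; since $d_v = 0$ identically on each $L_*^p$, one has $E^1_{p,q} = L^p_q = H_q(B_d(mp);\mathbb{Q})$ and the $d_1$ differential is exactly the horizontal cosimplicial differential $d_h$. Consequently $E^2_{p,q}$ is the cohomology of the cochain complex $(L^\bullet_q, d_h)$ in cosimplicial direction $p$.

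Finally I would verify that all higher differentials vanish. This is the only step with any content, and it is formal: in the zigzag description of $d_r$ for $r\ge 2$, one takes a class represented by $y\in L^p_q$ with $d_h y = 0$, lifts $d_h y$ through $d_v$, and iterates. Each such lift requires an application of $d_v$, which is identically zero. Hence $d_r = 0$ for all $r\ge 2$, and the spectral sequence collapses at $E^2$. No obstacle arises beyond writing down the zigzag description cleanly; the entire argument rests on the trivial vertical differential, and there is no hidden analytic or combinatorial difficulty.
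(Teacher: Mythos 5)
Your proof is correct and follows exactly the same reasoning as the paper: the key observation is that each $L^p_*=H_*(B_d(mp);\mathbb{Q})$ has zero internal differential, so the vertical differential of the bicomplex vanishes and the spectral sequence collapses at $E^2$. The paper states this in one sentence and leaves the $E^2$-collapse implicit, whereas you unpack the standard zigzag argument showing $d_r=0$ for $r\ge 2$; both are the same argument at the same level of rigor.
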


\begin{proof} By Proposition~\ref{cardinal_prop}  and Definition~\ref{cosimplicial_defn}, we have $L_*^{p}= H_*(B_d(mp))$ for each $p \geq 0$. Since the homology $H_*(B_d(mp)$ is a chain complex with $0$ differential, it follows that the vertical differential in the bicomplex associated to $L_*^{\bullet}$ is trivial. Therefore, the spectral sequence $\{E^r(L_*^{\bullet})\}_{r \geq 0}$ collapses at the $E^2$ page.
\end{proof}

We are now ready to prove Theorem~\ref{thm2}.

\begin{proof}[\emph{\textbf{Proof of Theorem~\ref{thm2}}}]
The proof follows from the following three points:
\begin{enumerate}
\item[$\bullet$] the $E^1$ pages of $\{E^r(L_*^{\bullet})\}_{r \geq 0}$ and $\{E^r(C_*(\mathcal{K}_d^{m\bullet}; \mathbb{Q}))\}_{r \geq 0}$ are isomorphic by Lemma~\ref{equality_lem};
\item[$\bullet$] for $d \geq 4$, the spectral sequences $\{E^r(L_*^{\bullet})\}_{r \geq 0}$ and $\{E^r(C_*(\mathcal{K}_d^{m\bullet}; \mathbb{Q}))\}_{r \geq 0}$ have the same abutment by Theorem~\ref{thm1};
\item[$\bullet$] the spectral sequence $\{E^r(L_*^{\bullet})\}_{r \geq 0}$ collapses at the $E^2$ page by Lemma~\ref{collapses_lem}.
\end{enumerate}
\end{proof}

\section{High dimensional analogues of spaces of long links} \label{section4'}

%\index{space!long links in high dimension}
  
The goal of this short section is to show  that our method enables us also to get the collapsing at the $E^2$ page of the spectral sequence computing the rational homology of  the high dimensional analogues of spaces of long links.

Let us start with a definition. Let $\mbox{Emb}_c(\coprod_{i=1}^m \rbb^n, \rdbb)$ (respectively $\mbox{Imm}_c(\coprod_{i=1}^m \rbb^n, \rdbb)$) be the space of compactly supported embeddings (respectively immersions) of $\coprod_{i=1}^m \rbb^n$ inside $\rdbb$. 

\begin{defn} The \emph{high dimensional analogues of spaces of long links} is the homotopy fiber of the inclusion 
$$\emph{Emb}_c(\coprod_{i=1}^m \mathbb{R}^n, \rdbb) \hookrightarrow \emph{Imm}_c(\coprod_{i=1}^m \mathbb{R}^n, \rdbb),$$ 
and it is denoted by $\overline{\emph{Emb}}_c(\coprod_{i=1}^m \mathbb{R}^n, \rdbb)$.
\end{defn}

As in the case of long links, let us consider the cosimplicial chain complex 
$$L_*^{n\bullet}:=H_*(B_d, \mathbb{Q})((\vee_{i=1}^m S^n)_{\bullet})$$
in which $(\vee_{i=1}^m S^n)_{\bullet}$ is the simplicial model (built in the similar way as $(\vee_{i=1}^m S^1)_{\bullet}$ ) of the wedge $\vee_{i=1}^m S^n$ of $m$ copies of the $n$ dimensional sphere $S^n$. The following theorem gives a cosimplicial model for the singular chain complex  $C_*\overline{\mbox{Emb}}_c(\coprod_{i=1}^m \mathbb{R}^n, \mathbb{R}^d)$.

\begin{thm} \label{thm1-1}
For $d \geq 2n+2$ there is a weak equivalence 
$$\emph{Tot}L_*^{n\bullet} \simeq C_*(\overline{\emph{Emb}}_c(\coprod_{i=1}^m \mathbb{R}^n, \mathbb{R}^d)) \otimes \mathbb{Q}.$$
\end{thm}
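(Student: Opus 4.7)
The strategy is to imitate the proof of Theorem~\ref{thm1} step by step, replacing $M$ throughout by its higher-dimensional analogue $M_n=\mathbb{R}^{n+1}\setminus K_n$, the little disks operad $B_2$ by $B_{n+1}$, and the wedge $\vee_{i=1}^mS^1$ by $\vee_{i=1}^mS^n$. The hypothesis $d>2n+3$ is exactly what is required so that all the ingredients of Section~\ref{section2} apply to a manifold sitting in $\mathbb{R}^{n+1}$: indeed Theorem~\ref{equi2_lem} and Proposition~\ref{prop1-1} applied with $N=M_n\subset\mathbb{R}^{n+1}$ both ask for $d\geq 2(n+1)+1=2n+3$.

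First I would establish the higher-dimensional analogue of Proposition~\ref{equivalence_prop}, namely the weak equivalence
$$\overline{\mbox{Emb}}_c\Bigl(\coprod_{i=1}^m\mathbb{R}^n,\mathbb{R}^d\Bigr)\simeq\overline{\mbox{Emb}}_c(M_n,\mathbb{R}^d),$$
by the same argument as in the case $n=1$: one first reduces the left-hand side to embeddings modulo immersions of $m$ copies of $I^n$ into $I^d$ with fixed boundary behaviour, and then observes that each of the $m$ regions of $M_n$ lying between two consecutive components of $K_n$ deformation retracts onto a copy of $I^n$. Granting this, the chain of equivalences
$$
\begin{array}{lll}
S_*\bigl(\overline{\mbox{Emb}}_c(\coprod_{i=1}^m\mathbb{R}^n,\mathbb{R}^d)\bigr)\otimes\mathbb{Q}
& \simeq & C_*(\overline{\mbox{Emb}}_c(M_n,\mathbb{R}^d);\mathbb{Q}) \\
& \simeq & T_{\infty}C_*(\overline{\mbox{Emb}}_c(M_n,\mathbb{R}^d);\mathbb{Q}) \\
& \simeq & \underset{C_*B_{n+1}}{\mbox{hInfBim}}(C_*\mbox{sEmb}(-,M_n),H_*B_d) \\
& \simeq & \underset{\mbox{Com}}{\mbox{hInfBim}}(C_*(\widehat{M_n}^{\times_-}),H_*B_d) \\
& \simeq & \underset{\Gamma}{\mbox{hRmod}}(C_*(\widehat{M_n}^{\times_-}),H_*B_d)
\end{array}
$$
then follows respectively from the above analogue, Theorem~\ref{equi2_lem} with $N=M_n$, the combination of Propositions~\ref{algbr2_prop} and~\ref{prop1-1}, Proposition~\ref{prop2}, and Lemma~\ref{equi_lem}.

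To conclude, I would verify the higher-dimensional analogue of Remark~\ref{wedge_rmq}, namely the pointed weak equivalence $\widehat{M_n}\simeq\vee_{i=1}^mS^n$, which is argued in the same way as for $n=1$ by examining the one-point compactification of the complement of the thickened $I^n$'s in $\mathbb{R}^{n+1}$. Substituting this into the last term and applying Proposition~\ref{iso_prop} with $X_{\bullet}=(\vee_{i=1}^mS^n)_{\bullet}$ and $A=H_*(B_d;\mathbb{Q})$ identifies it with $\mbox{Tot}\,H_*(B_d)((\vee_{i=1}^mS^n)_{\bullet})=\mbox{Tot}\,L_*^{n\bullet}$, which finishes the proof.

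\textbf{Main obstacle.} The two genuinely new ingredients, relative to the long-links case, are (i) the analogue of Proposition~\ref{equivalence_prop}, which requires identifying the homotopy fibre of the inclusion of embeddings into immersions with an embedding space of the open manifold $M_n$, and (ii) the computation of the homotopy type $\widehat{M_n}\simeq\vee_{i=1}^mS^n$ of its one-point compactification. Everything else --- Theorem~\ref{equi2_lem}, Propositions~\ref{algbr2_prop}, \ref{prop1-1}, \ref{prop2}, Lemma~\ref{equi_lem}, and Proposition~\ref{iso_prop} --- was already stated for a general complement $N$ of a compact subset of $\mathbb{R}^n$ with the corresponding dimensional hypothesis, so it applies off the shelf once (i) and (ii) are in place.
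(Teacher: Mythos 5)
Your proposal matches the paper's own (very terse) proof, which simply says the argument of Theorem~\ref{thm1} carries over verbatim after substituting $M_n$ for $M$, $B_{n+1}$ for $B_2$, $S^n$ for $S^1$, and $L_*^{n\bullet}$ for $L_*^{\bullet}$; your reconstruction of the chain of equivalences, the identification of the two genuinely new checks (the analogue of Proposition~\ref{equivalence_prop} and $\widehat{M_n}\simeq\vee_{i=1}^mS^n$), and the bookkeeping showing that $d>2n+3$ is the correct hypothesis are all in accord with what the paper intends.
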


\begin{proof} The proof works exactly as that of Theorem~\ref{thm1} given in Section~\ref{section3}. It suffices to
\begin{enumerate}
\item[-] use (\ref{weak_equiv_mn}) from Remark~\ref{weak_equiv_rmk},
\item[-] replace $M$ by $M_n$ (recall that the open submanifold $M_n \subseteq \mathbb{R}^{n+1}$ was defined in equation (\ref{manifoldn_eq})),
\item[-] replace $B_2$ by $B_{n+1}$, and of course $S^1$  by $S^n$ and $L^{\bullet}_*$ by $L^{n\bullet}_*$, 
\end{enumerate}
the rest remains unchanged. 
\end{proof}

The following corollary is a generalization of Corollary~\ref{coro1}. It is also an immediate consequence of Theorem~\ref{thm1-1}.

\begin{coro} \label{coro1-1}
For $d \geq 2n+2$ there is an isomorphism
$$H_*(\overline{\emph{Emb}}_c(\coprod_{i=1}^m \mathbb{R}^n, \mathbb{R}^d); \mathbb{Q}) \cong HH^{\vee_{i=1}^m S^n}(H_*(B_d; \mathbb{Q})).$$
\end{coro}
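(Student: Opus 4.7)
The plan is to simply extract the statement from Theorem~\ref{thm1-1} by passing to homology and then to recognize the left-hand side as the $\vee_{i=1}^m S^n$-homology of $H_*(B_d;\mathbb{Q})$ via the definition given in the introduction.

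More precisely, I would first invoke Theorem~\ref{thm1-1}, which for $d > 2n+3$ furnishes a quasi-isomorphism
\[
\mathrm{Tot}\, L_*^{n\bullet} \simeq S_*(\overline{\mathrm{Emb}}_c(\coprod_{i=1}^m \mathbb{R}^n, \mathbb{R}^d)) \otimes \mathbb{Q}.
\]
Taking homology on both sides yields
\[
H_*\bigl(\mathrm{Tot}\, L_*^{n\bullet}\bigr) \cong H_*\bigl(\overline{\mathrm{Emb}}_c(\coprod_{i=1}^m \mathbb{R}^n, \mathbb{R}^d);\mathbb{Q}\bigr).
\]

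Next, I would recall the definition given in the introduction of the $X$-homology of a right $\Gamma$-module $A$: for a simplicial object $X_{\bullet}$ in $\Gamma$, one forms the cosimplicial chain complex $A(X_{\bullet})$, and $HH^{X}(A)$ is by definition the homology of its totalization. Applied to $X_{\bullet} = (\vee_{i=1}^m S^n)_{\bullet}$ and $A = H_*(B_d;\mathbb{Q})$ (viewed as a right $\Gamma$-module), this reads
\[
HH^{\vee_{i=1}^m S^n}\bigl(H_*(B_d;\mathbb{Q})\bigr) = H_*\bigl(\mathrm{Tot}\, H_*(B_d;\mathbb{Q})((\vee_{i=1}^m S^n)_{\bullet})\bigr) = H_*\bigl(\mathrm{Tot}\, L_*^{n\bullet}\bigr),
\]
the last equality being the very definition of $L_*^{n\bullet}$. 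Combining with the previous display gives the desired isomorphism.

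There is no real obstacle: once Theorem~\ref{thm1-1} is in hand the corollary is a matter of unwinding definitions. The only thing worth noting is that the simplicial model $(\vee_{i=1}^m S^n)_{\bullet}$ must be understood as a simplicial object in $\Gamma$ in the same way as in Section~\ref{section4} (with base points preserved by faces and degeneracies), so that the composite $H_*(B_d;\mathbb{Q})((\vee_{i=1}^m S^n)_{\bullet})$ indeed defines a cosimplicial chain complex and the notation $HH^{\vee_{i=1}^m S^n}(-)$ makes sense.
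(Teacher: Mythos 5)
Your proposal is correct and matches the paper, which simply remarks that Corollary~\ref{coro1-1} is an immediate consequence of Theorem~\ref{thm1-1}: one applies homology to the quasi-isomorphism of Theorem~\ref{thm1-1} and unwinds the definitions of $L_*^{n\bullet}$ and of $X$-homology from the introduction. No further argument is needed.
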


Let us consider now the spectral sequence $\{E^r(L_*^{n\bullet})\}_{r  \geq 0} $, the Bousfield-Kan spectral sequence associated to $L_*^{n \bullet}$. It is clear by Theorem~\ref{thm1-1} that it converges to the homology $H_*(\overline{\mbox{Emb}}_c(\coprod_{i=1}^m \mathbb{R}^n, \mathbb{R}^d); \mathbb{Q})$, when $d \geq 2n+2$. We can prove exactly as Lemma~\ref{collapses_lem} above that this spectral sequence collapses at the $E^2$ page, which gives the following result. 

\begin{prop} \label{prop1-1_link} For $d \geq 2n+2$, the spectral sequence $\{E^r(L_*^{n\bullet})\}_{r  \geq 0}$ computing the rational homology $H_*(\overline{\emph{Emb}}_c(\coprod_{i=1}^m \mathbb{R}^n, \mathbb{R}^d); \mathbb{Q})$  collapses at the $E^2$ page rationally.
\end{prop}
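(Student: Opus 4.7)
The plan is to transpose the proof of Lemma~\ref{collapses_lem} essentially verbatim, substituting Theorem~\ref{thm1-1} for Theorem~\ref{thm1}. First I would unpack the definition: for each $p \geq 0$, by construction
$$L_*^{np} = H_*(B_d; \mathbb{Q})\bigl((\vee_{i=1}^m S^n)_p\bigr),$$
which, because $H_*(B_d;\mathbb{Q})$ is the contravariant functor $k_+ \mapsto H_*(B_d(k);\mathbb{Q})$, is simply $H_*(B_d(k_p); \mathbb{Q})$ for $k_p$ equal to the number of non-basepoint elements of $(\vee_{i=1}^m S^n)_p$. In particular each $L_*^{np}$ is a graded rational vector space regarded as a chain complex with zero internal differential. (The exact value of $k_p$ is irrelevant for what follows.)

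The key observation is that the vertical differential $d_v$ of the bicomplex associated with the cosimplicial chain complex $L_*^{n\bullet}$ vanishes identically, because it is the internal differential of each $L_*^{np}$. Consequently, in the spectral sequence $\{E^r(L_*^{n\bullet})\}_{r \geq 0}$ induced by the cosimplicial-degree filtration of the total complex, the $d_0$ differential is zero, so $E^1_{p,q} = L_q^{np}$ with $d_1$ equal to the alternating sum of the coface-induced maps. Taking $d_1$-homology produces $E^2$; but since the total differential on $\mbox{Tot} L_*^{n\bullet}$ reduces to this horizontal differential alone, $E^2$ already coincides with $H_*(\mbox{Tot} L_*^{n\bullet})$. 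Hence the differentials $d_r$ for $r \geq 2$ must all vanish, i.e. the spectral sequence collapses at the $E^2$ page.

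To close, I would invoke Theorem~\ref{thm1-1}: for $d > 2n+3$ the totalization $\mbox{Tot} L_*^{n\bullet}$ is quasi-isomorphic to the rational singular chain complex of $\overline{\mbox{Emb}}_c(\coprod_{i=1}^m \mathbb{R}^n, \mathbb{R}^d)$, so the abutment of the collapsing spectral sequence is precisely $H_*(\overline{\mbox{Emb}}_c(\coprod_{i=1}^m \mathbb{R}^n, \mathbb{R}^d); \mathbb{Q})$, as required.

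I do not anticipate any serious obstacle: all the hard work has already been done in Section~\ref{section3} (the quasi-isomorphism $\mbox{Tot} L_*^{n\bullet} \simeq S_*\overline{\mbox{Emb}}_c(\coprod_{i=1}^m \mathbb{R}^n, \mathbb{R}^d)\otimes\mathbb{Q}$, which is where the hypothesis $d > 2n+3$ enters via relative formality) and in the cosimplicial setup itself. The remaining collapse statement is purely formal, relying only on the fact that the cosimplicial object $L_*^{n\bullet}$ is levelwise a graded vector space with zero differential — a property that holds regardless of the value of $n$.
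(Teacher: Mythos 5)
Your proof is correct and follows essentially the same route as the paper, which for this proposition just points to Lemma~\ref{collapses_lem}: the levelwise chain complexes $L_*^{np}$ are graded vector spaces with zero internal differential, so the vertical differential of the associated bicomplex vanishes and the spectral sequence of the cosimplicial-degree filtration collapses at $E^2$, while Theorem~\ref{thm1-1} identifies the abutment under the hypothesis $d > 2n+3$.
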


\section{Poincar\'e series for the space of long links modulo $m$ copies of long knots} \label{section5}

The aim of this section is to prove that the radius of convergence of the Poincar\'e series for the pair formed by the space of long links and the space of $m$ copies of long knots tends to $0$ as $m$ goes to the infinity.  We also state a conjecture followed by a theorem concerning the radius of convergence for that pair. Here, the abreviation $H^*$BKSS means cohomology Bousfield-Kan spectral sequence.

Let us start by defining expressions that appear in the title of the section.

\begin{defn} Let $X$ be a topological space. 
\begin{enumerate}
\item[$\bullet$] For $k \geq 0$ the  \emph{$k$th Betti number} \index{Betti numbers}, $b_k(X)$, of $X$ is the rank of its $k$th homology group $H_k(X)$.
\item[$\bullet$] The \emph{Poincar\'e} series \index{Poincare@Poincar\'e!series} of $X$, denoted by $P_X[x]$, is the series $P_X[x]= \sum_{k=0}^{\infty} b_k(X) x^k.$
\end{enumerate} 
\end{defn}

Up to now  we have denoted the space of long knots (modulo immersions) by $\ebarrtext$. For the sake of simplicity,  we will denote it here by $\mathcal{K}$. Let $\mathcal{K}^{\times m}$ denote the space of $m$ copies of long knots. Recall also the notation $\overline{\mathcal{L}}_m^d$ for the space of long links (modulo immersions) of $m$ strands in $\rdbb$. It is clear that $\mathcal{K}^{\times m}$ is a subspace of $\overline{\mathcal{L}}_m^d$.

\begin{defn}
The pair $(\overline{\mathcal{L}}_m^d, \mathcal{K}^{\times m})$ is called the \emph{space of long links modulo $m$ copies of long knots}.  
\end{defn}

In \cite{komawila12}, Komawila and Lambrechts studied the Euler series of the $E_1$ page of the $H^*$BKSS associated to the Munson-Voli\'c cosimplicial model  for the space of long links, and they obtained the following results. Recall that the Euler series \index{Euler series} associated to a bigraded vector space $V=\{V_{p,q} \}_{p,q \geq 0}$ is defined by
$$\chi(V)[x]= \sum_{q=0}^{\infty} \left( \sum_{p=0}^{\infty} (-1)^p \mbox{dim} (V_{p,q})\right) x^q.$$

\begin{thm}\emph{\cite[Theorem 5.1]{komawila12}} \label{komlam_thm}
For $d \geq 4$ the Euler series  $\chi(E_1)[x]$ of the $E_1$ page of the  $H^*$BKSS associated to $\mathcal{L}_m^{d \bullet}$ is given by 
\begin{eqnarray} \label{euler_serie_links}
\chi(E_1)[x]= \frac{1}{(1-x^{d-1})(1-2x^{d-1}) \cdots (1-mx^{d-1})}.
\end{eqnarray}
\end{thm}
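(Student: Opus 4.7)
The plan is to compute $\chi(E_1)[t]$ by combining the known Poincar\'e polynomial of configuration spaces with a combinatorial identification of the normalized cochain complex, then recognize the resulting sum as the claimed product.

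First I would identify the $E_1$ page. At cosimplicial degree $p$, the Munson-Voli\'c cosimplicial space $\mathcal{L}_m^{d\bullet}$ is weakly equivalent to $B_d(mp)$ (combining $|(\vee_{i=1}^mS^1)_p|=mp+1$ from Proposition~\ref{cardinal_prop} with Sinha's equivalence $\mathcal{K}_d\simeq B_d$), so the $E_1$ page is $E_1^{p,q}=N^pH^q(B_d(mp);\mathbb{Q})$ in the normalized complex. Normalization is essential: otherwise the $t^0$-coefficient of $\chi(E_1)[t]$ would be the divergent sum $\sum_p(-1)^p\dim H^0(B_d(mp))=\sum_p(-1)^p$. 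By Arnold/Fadell-Neuwirth, $H^*(B_d(n);\mathbb{Q})$ has Poincar\'e polynomial $\prod_{k=1}^{n-1}(1+kt^{d-1})$, with the Arnold (fall) basis $\omega_{i_1j_1}\cdots\omega_{i_rj_r}$ subject to $j_1<\cdots<j_r$ and $i_\ell<j_\ell$.

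Second I would identify the normalized piece combinatorially. The codegeneracies of $\mathcal{L}_m^{d\bullet}$ arise via the contravariant functor $H^*(B_d)$ from the $p$ simplicial degeneracies $s_i$ of $(\vee_{i=1}^mS^1)_\bullet$. A direct analysis of $\Delta^1_\bullet/\partial\Delta^1_\bullet$ shows that each $s_i\colon S^1_{p-1}\to S^1_p$ misses exactly one non-basepoint---the string $(0^{i+1}1^{p-i})$---so on the wedge each $s_i$ misses a specific $m$-subset of the $mp$ non-basepoints, and as $i$ ranges over $\{0,\ldots,p-1\}$ these $m$-subsets partition $\{1,\ldots,mp\}$. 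On cohomology the image of the $i$-th codegeneracy pullback is the subalgebra of $H^*(B_d(mp);\mathbb{Q})$ generated by the $\omega_{jk}$ on the complementary $(mp-m)$-subset, so $N^pH^*(B_d(mp);\mathbb{Q})$ has a basis consisting of those fall monomials whose index set meets every one of the $p$ missed $m$-subsets. Writing $y:=t^{d-1}$, this yields an explicit combinatorial formula $P_{N^p}(t)=\sum_r c_{p,r}\,y^r$.

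Third I would establish the identity
\[
\chi(E_1)[t]=\sum_{p\ge 0}(-1)^pP_{N^p}(t)=\prod_{k=1}^m(1-kt^{d-1})^{-1}.
\]
Expanding the right-hand side in $y$ gives $\sum_{N\ge 0}h_N(1,\ldots,m)\,y^N$, where $h_N$ is the complete homogeneous symmetric polynomial. On the left, organizing the fall monomials of $H^*(B_d(mp);\mathbb{Q})$ by the index set $S\subseteq\{1,\ldots,mp\}$ they actually use factors $\prod_{k=1}^{mp-1}(1+ky)$ as a sum indexed by $S$; summing with alternating signs across $p$, the contributions of configurations failing to meet some missed $m$-subset cancel by inclusion-exclusion, leaving only the ``strand-meeting'' contributions, which should reorganize into the product. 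As a sanity check for $m=1$ and $r=2$, the identity reduces to $-2+3=1$: the $-2$ comes from $\omega_{12}\omega_{13},\omega_{12}\omega_{23}\in H^{2(d-1)}(B_d(3))$ and the $+3$ from the three perfect-matching monomials in $H^{2(d-1)}(B_d(4))$, matching the $y^2$-coefficient of $(1-y)^{-1}$.

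The main obstacle will be this final combinatorial identity, especially the factorization into $m$ terms $(1-ky)^{-1}$. Each factor $(1-ky)^{-1}$ is the James Poincar\'e series of $H^*(\Omega\vee^kS^d;\mathbb{Q})$, but this structural interpretation is not transparent from the strand-meeting count on fall monomials. A cleaner route may pass through a K\"unneth-type decomposition of the higher Hochschild homology of $H^*(B_d;\mathbb{Q})$ on $\vee_{i=1}^mS^1$---exploiting its operadic Gerstenhaber/Poisson structure---which could factor the Euler series directly rather than by brute enumeration.
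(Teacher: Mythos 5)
This result is stated in the paper only as a citation to Komawila--Lambrechts \cite{komawila12}; the paper gives no proof of it, so there is nothing to compare your argument against internally.

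On its own terms, your proposal sets up the right framework and gets the preliminary combinatorics correct: the Poincar\'e polynomial $\prod_{k=1}^{n-1}(1+kt^{d-1})$ of $H^*(B_d(n);\mathbb{Q})$, the identification $E_1^{p,*}\cong N^pH^*(B_d(mp);\mathbb{Q})$, and the observation that the $p$ degeneracies $s_i$ of $(\vee_{i=1}^m S^1)_\bullet$ inject $S^1_{p-1}$ into $S^1_p$ missing exactly one non-basepoint, so that the $p$ missed $m$-subsets partition $\{1,\dots,mp\}$ and the conormalized piece is spanned by fall monomials whose support meets each block. Your $m=1$, $r=2$ spot-check ($-2+3=1$) is also correct. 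But the heart of the theorem --- the identity $\sum_{p\geq 0}(-1)^p P_{N^p}(t)=\prod_{k=1}^m(1-kt^{d-1})^{-1}$ --- is exactly where the proposal breaks off. The sentence ``the contributions of configurations failing to meet some missed $m$-subset cancel by inclusion-exclusion, leaving only the strand-meeting contributions, which should reorganize into the product'' is not an argument: inclusion-exclusion over the blocks gives $P_{N^p}(y)=\sum_{j}(-1)^{p-j}\binom{p}{j}P_{mj}(y)$ (with $y=t^{d-1}$ and $P_n(y)=\prod_{k=1}^{n-1}(1+ky)$), and the naive interchange of the $p$- and $j$-sums in $\sum_p(-1)^pP_{N^p}(y)$ produces divergent $\sum_p\binom{p}{j}$ terms, so the alternating sum must be handled coefficientwise and the reorganization into the product $\prod_k(1-ky)^{-1}$ (equivalently, into $\sum_r S(r+m,m)\,y^r$) requires a genuine Stirling-number--type combinatorial argument that is absent. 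The closing suggestion about a K\"unneth/Gerstenhaber factorization of the higher Hochschild homology is speculation, not a proof. So the proposal is an incomplete outline: the skeleton is reasonable and plausibly parallels the cited source (whose very title invokes Stirling numbers), but the decisive combinatorial identity, which is essentially the entire content of the theorem, is not established.
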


The following corollary gives the  Euler series of the pair $(\overline{\mathcal{L}}_m^d, \mathcal{K}^{\times m})$.

\begin{coro} \emph{\cite{komawila12}} \label{komlam_coro}
For $d \geq 4$ the Euler series of the $E_2$ page of the $H^*$BKSS associated to the pair $(\mathcal{L}_m^{d \bullet}, (\kdpt)^{\times m})$ is given by 
\begin{eqnarray}  \label{euler_serie_links-knots}
\chi(E_2)[x] = \frac{1}{(1-x^{d-1})(1-2x^{d-1}) \cdots (1-mx^{d-1})} - \frac{1}{(1-x^{d-1})^m}.
\end{eqnarray}
\end{coro}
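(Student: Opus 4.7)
The plan is to compute the Euler series by exploiting two general principles: the Euler characteristic is invariant under differentials of any spectral sequence, and it is additive on short exact sequences (hence pairs). This will let me decompose $\chi(E_2)$ for the pair into a difference of two quantities already expressible via Theorem~\ref{komlam_thm}.

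First, I would observe that for any page $E_r$ of the $H^*$BKSS associated to the cosimplicial pair $(\mathcal{L}_m^{d\bullet}, (\kdpt)^{\times m})$, the Euler series satisfies $\chi(E_r) = \chi(E_1)$, because each differential $d_r$ kills generators in pairs of opposite parity in the cosimplicial degree $p$ (so the alternating sum defining $\chi$ is preserved). Hence it is enough to compute $\chi(E_1)$ for the pair and invoke this invariance to conclude for $E_2$. Next, because $(\kdpt)^{\times m}$ is a cosimplicial subobject of $\mathcal{L}_m^{d\bullet}$ levelwise, the $E_1$ page of the $H^*$BKSS for the pair sits in a long exact sequence with the $E_1$ pages of $\mathcal{L}_m^{d\bullet}$ and $(\kdpt)^{\times m}$, giving
\[
\chi(E_1^{\mathrm{pair}})[x] \;=\; \chi(E_1^{\mathcal{L}_m^{d\bullet}})[x] \;-\; \chi(E_1^{(\kdpt)^{\times m}})[x].
\]

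The first term is exactly what Theorem~\ref{komlam_thm} provides, namely $\dfrac{1}{\prod_{k=1}^{m}(1-kx^{d-1})}$. For the second term I would use a Künneth-type argument: the cosimplicial model $(\kdpt)^{\times m}$ is a product of $m$ identical cosimplicial spaces, so the associated bicomplex is (up to sign conventions) the tensor product of the $m$ bicomplexes attached to $\kdpt$, and therefore its $E_1$ page is the tensor product of the $E_1$ pages for $\kdpt$. Consequently $\chi(E_1^{(\kdpt)^{\times m}})[x]$ equals the $m$-th power of $\chi(E_1^{\kdpt})[x]$, which by the $m=1$ case of Theorem~\ref{komlam_thm} is $\dfrac{1}{1-x^{d-1}}$. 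Putting these together yields precisely (\ref{euler_serie_links-knots}).

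The main obstacle, in my view, is the Künneth step for the product cosimplicial space $(\kdpt)^{\times m}$: one needs to be careful to identify the $E_1$ page of the Bousfield--Kan spectral sequence of a diagonal cosimplicial product with the tensor product of $E_1$ pages, handling the Eilenberg--Zilber shuffle explicitly and keeping track of signs so that Euler characteristics multiply. Once that point is justified, the rest is a formal manipulation of formal power series, and the invariance of Euler characteristics under $d_r$ delivers the statement at the $E_2$ page as required.
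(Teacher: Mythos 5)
Your proposal is essentially correct and follows the same conceptual route as the paper's very terse proof: express $\chi(E_2)$ for the pair as $\chi(E_2^{\overline{\mathcal{L}}_m^d})-\chi(E_2^{(\kdpt)^{\times m}})$, read off the first term from Theorem~\ref{komlam_thm}, and identify the second term as the $m$\textsuperscript{th} power of the $m=1$ case. The one structural difference is that the paper extracts the subtraction formula from the retraction $\overline{\mathcal{L}}_m^d \to \mathcal{K}^{\times m}$, which actually \emph{splits} the cosimplicial model and hence gives a direct-sum decomposition of every page $E_r$ (so additivity of $\chi$ holds on the nose at $E_2$ without any invariance argument); you instead invoke the long exact sequence of the pair at $E_1$ and then transport across $d_1$, which works just as well for Euler characteristics.

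Two points deserve a caveat. First, your blanket assertion that $\chi(E_r)=\chi(E_1)$ for all $r$ is not correct with the Euler series as defined here: for $r\geq 2$ the differential $d_r$ shifts the internal degree $q$ by $1-r\neq 0$, so classes are cancelled across different rows and the quantity $\sum_p(-1)^p\dim E_r^{p,q}$ need not be preserved. Fortunately only the passage $E_1\to E_2$ is needed, and there $d_1$ preserves $q$ and raises $p$ by one, so the cancelation is parity-alternating within each row and $\chi(E_1)=\chi(E_2)$ indeed holds. Second, your K\"unneth step is the real content and you rightly flag it: since $(\kdpt)^{\times m}$ is a \emph{levelwise} (diagonal) product, $\dim E_1^{p,q}$ for the product is \emph{not} the coefficient in a naive tensor product of bigraded spaces (the cosimplicial degrees of the $m$ factors are forced to coincide rather than to add). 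What saves the argument is the cosimplicial Eilenberg--Zilber theorem: the normalized complex of the diagonal is chain-homotopy equivalent, preserving the cosimplicial degree $p$, to the tensor product of the $m$ normalized complexes, and chain-homotopy equivalence preserves the row-wise Euler characteristic $\sum_p(-1)^p\dim(-)$. With that made explicit, your computation $\chi(E_1^{(\kdpt)^{\times m}})=\left(\tfrac{1}{1-x^{d-1}}\right)^{m}$ is justified, and the corollary follows.
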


\begin{proof}
Recall first that  the pair $(\overline{\mathcal{L}}_m^d, \mathcal{K}^{\times m})$ admits a cosimplicial model $(\mathcal{L}_m^{d \bullet}, (\kdpt)^{\times m})$.  The second component of that cosimplicial model is just the product $(\kdpt)^{\times m}$ of $m$ copies of the Sinha cosimplicial model $\kdpt$.
The proof of the corollary comes from Theorem~\ref{komlam_thm} and the fact that the retraction (up to homotopy) $\overline{\mathcal{L}}_m^d \lra \mathcal{K}^{\times m}$ (see \cite[Section 2.1]{komawila12} for an explicit definition of that retraction) holds at the level of cosimplicial models so that we have the following isomorphism of spectral sequences
\begin{eqnarray} \label{ss_isomorphism}
\{ E_r((\overline{\mathcal{L}}_m^d, \mathcal{K}^{\times m})) \}_{r \geq 0} \cong \frac{\{E_r(\overline{\mathcal{L}}_m^d)\}_{r \geq 0}}{\{E_r(\mathcal{K}^{\times m})\}_{r \geq 0}}.
\end{eqnarray}
\end{proof}

From Corollary~\ref{komlam_coro} and Theorem~\ref{thm2}, we deduce the exponential growth of the Betti numbers of the pair $(\overline{\mathcal{L}}_m^d, \mathcal{K}^{\times m})$.

\begin{prop} \label{growth_bettinumbers}
For $d \geq 4$ the Betti numbers of the pair $(\overline{\mathcal{L}}_m^d, \mathcal{K}^{\times m})$ grow at least exponentially. 
\end{prop}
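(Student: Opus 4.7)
The plan is to combine the $E^2$-collapse from Theorem~\ref{thm2} with the explicit Euler series of Corollary~\ref{komlam_coro} and a Cauchy--Hadamard type estimate. I assume $m \geq 2$, the case $m = 1$ being vacuous since the pair $(\mathcal{K}, \mathcal{K})$ has trivial relative homology.

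First, I would establish that the Bousfield--Kan spectral sequence $\{E_r(\overline{\mathcal{L}}_m^d, \mathcal{K}^{\times m})\}_{r \geq 0}$ of the pair collapses at $E^2$ rationally. Theorem~\ref{thm2} gives the $E^2$-collapse of $\{E_r(\overline{\mathcal{L}}_m^d)\}_{r \geq 0}$. Because the retraction $\overline{\mathcal{L}}_m^d \to \mathcal{K}^{\times m}$ is realised at the level of cosimplicial models (as invoked in the proof of Corollary~\ref{komlam_coro}), functoriality of the Bousfield--Kan construction promotes it to a retraction of spectral sequences. Thus $\{E_r(\mathcal{K}^{\times m})\}$ is a direct summand of the collapsing $\{E_r(\overline{\mathcal{L}}_m^d)\}$, so it also collapses at $E^2$; by the isomorphism~(\ref{ss_isomorphism}) the corresponding quotient is the pair's spectral sequence, hence this too collapses at $E^2$.

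Next, from the collapse over $\mathbb{Q}$ I would read off $b_q(\overline{\mathcal{L}}_m^d, \mathcal{K}^{\times m}) = \sum_p \dim_{\mathbb{Q}} E_2^{p,q}$, and the trivial inequality $|\sum_p (-1)^p a_p| \leq \sum_p a_p$ for $a_p \geq 0$ yields $b_q \geq |\chi_q|$, where $\chi_q$ denotes the $q$th coefficient of the Euler series $\chi(E_2)[x]$. Finally I would analyse the rational function
$$\chi(E_2)[x] = \frac{1}{\prod_{k=1}^{m}(1-kx^{d-1})} - \frac{1}{(1-x^{d-1})^m}$$
from Corollary~\ref{komlam_coro}. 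Its first summand has smallest singularity at $x_0 = m^{-1/(d-1)} < 1$, while the second summand is holomorphic on the open unit disk; thus $x_0$ remains a genuine singularity of the difference, so the radius of convergence of $\chi(E_2)[x]$ equals $m^{-1/(d-1)}$. Cauchy--Hadamard then gives $\limsup_q |\chi_q|^{1/q} = m^{1/(d-1)} > 1$, which combined with $b_q \geq |\chi_q|$ produces the claimed exponential growth.

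The delicate point is the first step: the quotient of a collapsing spectral sequence need not itself collapse unless the sub-sequence is an honest direct summand. The hard part will be checking that the homotopy retraction $\overline{\mathcal{L}}_m^d \to \mathcal{K}^{\times m}$ really descends to a splitting compatible with the cosimplicial filtration, thereby making the $E^2$-collapse of the quotient spectral sequence automatic.
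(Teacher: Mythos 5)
Your proposal follows essentially the same route as the paper: deduce the $E^2$-collapse of the pair's spectral sequence from Theorem~\ref{thm2} together with the retraction/splitting at the cosimplicial level, then read off exponential growth of Betti numbers from the Euler series of Corollary~\ref{komlam_coro}. The only difference is that you reprove the passage from Euler-series coefficients to Betti numbers directly via $b_q \geq |\chi_q|$ and a singularity/Cauchy--Hadamard analysis, whereas the paper simply cites \cite[Proposition 4.5]{komawila12} for this step; you also spell out the direct-summand argument for why the quotient spectral sequence collapses, which the paper leaves implicit in its appeal to (\ref{ss_isomorphism}).
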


\begin{proof}
By (\ref{ss_isomorphism}) and Theorem~\ref{thm2}, 
%????=thm2
the $H^*$BKSS of the pair $(\mathcal{L}_m^{d \bullet}, (\kdpt)^{\times m})$ collapses at the $E^2$ page. Moreover the coefficients of (\ref{euler_serie_links-knots}) have an exponential growth of rate $m^{\frac{1}{d-1}} > 1$, and by \cite[Proposition 4.5]{komawila12} the Betti numbers of the pair  $(\overline{\mathcal{L}}_m^d, \mathcal{K}^{\times m})$ have the same growth. 
\end{proof}

One can also see Proposition~\ref{growth_bettinumbers} as a consequence of a  theorem of Turchin  \cite[Theorem 17.1]{turchin10}, which states that the Betti numbers of the space $\mathcal{K}$ of long knots grow at least exponentially. Notice first that the concatenation operation endows $\overline{\mathcal{L}}_m^d$ and $\mathcal{K}^{\times m}$ with a structure, denoted $\times$,  of H-space. Let $\textbf{1} \in \mathcal{K}^{\times m}$ denote the unit, and consider the diagram
\begin{eqnarray} \label{fibration_square}
\xymatrix{F \ar[r]^{i} &  \overline{\mathcal{L}}_m^d \ar[r]^{\rho} & \mathcal{K}^{\times m} \\
F \ar[u]^{id} \ar[r]^-{g} & F \times \mathcal{K}^{\times m} \ar[u]^{\psi} \ar[r]^-{f} & \mathcal{K}^{\times m}, \ar[u]^{id} }
\end{eqnarray}
where
\begin{enumerate}
\item[-] $F$ is the fiber of $\rho$ over the unit $\textbf{1}$, $id$ is the identity map,
\item[-] the map $\psi$ is defined by $\psi(x,y)=i(x) \times s(y)$, where $s \colon \mathcal{K}^{\times m} \longrightarrow \overline{\mathcal{L}}_m^d$ is a section of $\rho$,
\item[-] the maps $g$ and $f$ are defined by $g(x)=(x, \textbf{1})$ and $f(x,y)=y$,
\item[-] the map $\rho$ is the one constructed in \cite[Section 2]{komawila12}.
\end{enumerate}
It is clear that the left square of (\ref{fibration_square}) commutes. The right square also commutes because of the following 
$$ \begin{array}{lll}
    \rho(\psi(x,y)) & = & \rho(i(x) \times s(y)) \\
		                & = & \rho(i(x)) \times \rho(s(y))\ \mbox{because $\rho$ is a morphism of H-spaces} \\
										& = & \textbf{1} \times y \ \mbox{because $s$ is a section of $\rho$} \\
										& = & f(x,y).
		\end{array}
$$
This implies that the triple $(id, \psi, id)$ is a morphism of fibrations, and therefore  the space $\overline{\mathcal{L}}_m^d$ is homeomorphic to the product $F \times \mathcal{K}^{\times m}$. We thus have the following inequality
$$\mbox{dim}(H_*(\overline{\mathcal{L}}_m^d)) > \mbox{dim}(H_*(\mathcal{K}^{\times m})).$$
Since  $\mbox{dim}(H_*(\mathcal{K}))$ grows at least exponentially \cite[Theorem 17.1]{turchin10}, it follows that $\mbox{dim}(H_*(\mathcal{K}^{\times m}))$ also grows at least exponentially, and the proposition follows.

Our proof has some consequences.  We have seen that the Betti numbers of the pair $(\overline{\mathcal{L}}_m^d, \mathcal{K}^{\times m})$ have an exponential growth of rate $m^{\frac{1}{d-1}}$. This implies the following corollary. 

\begin{coro} \label{radius_convergence_coro}
For $d \geq 4$, the radius of convergence \index{Poincare@Poincar\'e!radius of convergence} of the Poncar\'e series for the pair $(\overline{\mathcal{L}}_m^d, \mathcal{K}^{\times m})$ is less than or equal to  $(\frac{1}{m})^{\frac{1}{d-1}}$, and therefore tends to $0$ as $m$ goes to $\infty$.  
\end{coro}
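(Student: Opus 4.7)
The plan is to deduce the corollary directly from the exponential growth of Betti numbers established in Proposition~\ref{growth_bettinumbers} via the Cauchy--Hadamard formula, so there is very little additional work to do; the substance of the result was already packed into the earlier proposition.

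First I would recall that the first proof of Proposition~\ref{growth_bettinumbers} proceeds as follows: the isomorphism of spectral sequences (\ref{ss_isomorphism}) combined with Theorem~\ref{thm2} implies that the $H^*$BKSS of the pair $(\mathcal{L}_m^{d\bullet}, (\kdpt)^{\times m})$ collapses at the $E^2$ page rationally, so that the coefficients of the Euler series $\chi(E_2)[x]$ given by (\ref{euler_serie_links-knots}) exactly control (via \cite[Proposition 4.5]{komawila12}) the growth of the Betti numbers $b_k(\overline{\mathcal{L}}_m^d, \mathcal{K}^{\times m})$. The formula (\ref{euler_serie_links-knots}) is a rational function whose dominant pole lies at $x = m^{-1/(d-1)}$, coming from the factor $(1-mx^{d-1})$, so its coefficients grow at the exponential rate $m^{1/(d-1)}$. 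Pulling this back through Komawila--Lambrechts' comparison yields
$$ \limsup_{k \to \infty} b_k(\overline{\mathcal{L}}_m^d, \mathcal{K}^{\times m})^{1/k} \;\geq\; m^{\frac{1}{d-1}}. $$

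Next I would invoke the Cauchy--Hadamard formula applied to the Poincar\'e series $P_{(\overline{\mathcal{L}}_m^d, \mathcal{K}^{\times m})}[x] = \sum_{k \geq 0} b_k x^k$: its radius of convergence is precisely $\bigl(\limsup_k b_k^{1/k}\bigr)^{-1}$. Combining this with the lower bound above gives the desired inequality
$$ \mathrm{rad}\, P_{(\overline{\mathcal{L}}_m^d, \mathcal{K}^{\times m})} \;\leq\; \Bigl(\frac{1}{m}\Bigr)^{\frac{1}{d-1}}. $$
The second assertion is then immediate: for any fixed $d > 5$, the quantity $(1/m)^{1/(d-1)}$ tends to $0$ as $m \to \infty$.

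There is essentially no genuine obstacle in this step; the whole weight of the argument sits in Proposition~\ref{growth_bettinumbers}, and in particular in the collapsing Theorem~\ref{thm2} which allows one to pass from information about the Euler series of the $E^1$/$E^2$ page to information about the actual Betti numbers. The only point deserving a brief verification is that the $(1-mx^{d-1})$ factor in (\ref{euler_serie_links-knots}) really is the factor responsible for the dominant pole in the pair Euler series --- this is clear because it remains the smallest pole after subtracting the term $(1-x^{d-1})^{-m}$, whose poles are all at $x = 1$.
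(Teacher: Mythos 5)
Your proposal is correct and follows essentially the same approach as the paper. The paper treats the corollary as an immediate consequence of the observation, made during the proof of Proposition~\ref{growth_bettinumbers}, that the Betti numbers of the pair grow at exponential rate $m^{1/(d-1)}$; you simply make explicit the Cauchy--Hadamard step linking that growth rate to the radius of convergence (and correctly identify the factor $(1-mx^{d-1})$ as the one providing the dominant pole in (\ref{euler_serie_links-knots}), the subtracted term $(1-x^{d-1})^{-m}$ being singular only at $x=1$).
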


Let us denote by $RC(X)$ the radius of convergence of the Poincar\'e series for a space $X$. Specially for the space of long knots, we will denote it by $R$.

\begin{rmq}
As a consequence of Theorem~\ref{turchin10_thm}
%????=turchin10_thm
 we have the inequality $RC(\overline{\mathcal{L}}_m^d, \mathcal{K}^{\times m}) \leq (\frac{1}{\sqrt{2}})^{\frac{1}{d-1}}$. Our approach gives a better upper bound of this radius thanks to Corollary~\ref{radius_convergence_coro}. 
\end{rmq}

We end this section with a conjecture having a nice consequence.

\begin{conj} \label{rc_knots_conjecture}
The radius of convergence  of the Poincar\'e series of the space of long knots (modulo immersions)  is greater than $0$. That is, $R > 0$.
\end{conj}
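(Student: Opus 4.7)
\textbf{Proof proposal for Conjecture~\ref{rc_knots_conjecture}.} The statement $R>0$ is equivalent to saying that the rational Betti numbers $b_n(\mathcal{K})$ grow at most exponentially in $n$. My plan is to exploit the collapsing of the homology BKSS for long knots at the $E^2$ page (the $m=1$ case of Theorem~\ref{thm2}, available for $d>5$, and extended to smaller $d$ by \cite{lam_tur_vol10, songhaf12, moriya12}) and then to bound the bigraded dimensions of the $E^2$ page directly. Concretely, since the spectral sequence collapses, one has
$$\sum_n b_n(\mathcal{K})\, x^n \;\leq\; \sum_{p,q}\dim E_2^{p,q}\cdot x^{q-p},$$
where $E_2^{p,q}=H^p(L^\bullet_q)$ and $L_q^p=H_q(B_d(p);\mathbb{Q})$, and it suffices to show that the right-hand side has positive radius of convergence.

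First, I would reinterpret the $E^2$ page as the cohomology of an explicit graph complex. Because $H_*(B_d;\mathbb{Q})$ is the $(d-1)$-shifted Poisson operad, and the cosimplicial structure comes from the pointed simplicial circle $S^1_\bullet$, the horizontal cohomology of $L^\bullet_*$ may be identified with a deformation (bar-cobar) complex of the canonical morphism of operads $H_*(B_d;\mathbb{Q})\to H_0(B_d;\mathbb{Q})=\mathrm{Com}$. Invoking the Koszul self-duality of the shifted Poisson operad, I would identify this deformation complex with a Kontsevich-style connected graph complex: generators in bidegree $(p,q)$ correspond to connected graphs whose numbers of vertices and edges are linearly bounded in terms of $p$ and $q$, and the shift by $d-1$ per edge makes the total degree $n=q-p$ depend linearly on the combinatorial size of the graph.

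The decisive step is then a purely enumerative estimate: the number of isomorphism classes of connected labelled graphs with $e$ edges on $v$ vertices is bounded by $\binom{\binom{v}{2}}{e}$, which under the linear relation between $(e,v)$ and $n$ coming from the edge degree shift yields a bigraded generating function whose Poincar\'e series in the variable $x^{q-p}$ has positive radius of convergence. Passing from chains to cohomology only decreases dimensions, so one concludes $R\geq 1/C>0$ for an explicit $C$ depending on $d$.

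The main obstacle, in my view, is making the first step precise. On the $E^1$ page the dimensions $\dim L_*^p = p!$ already grow super-exponentially in the cosimplicial direction, so any naive term-by-term bound on $\sum_{p,q}\dim L_q^p\, x^{q-p}$ diverges everywhere. The argument must therefore genuinely use the cosimplicial differential to cut the complex down to its connected part. The cleanest implementation I envisage is through the Chevalley-Eilenberg complex of the convolution $L_\infty$-algebra attached to the morphism $\mathrm{Pois}_d \to \mathrm{Com}$: Koszulness of $\mathrm{Pois}_d$ would reduce the $E^2$ page to a complex generated by connected graphs, where the enumerative bound above applies routinely. Setting up this operadic reduction, and controlling the precise degree shift that makes the connected-graph generating function converge rather than diverge, is where the real work of the proof lies.
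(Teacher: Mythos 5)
This statement is not proved in the paper at all: it is explicitly labeled as a conjecture, introduced with the line ``We end this section with a conjecture (we believe in that conjecture) and a theorem,'' and the only place it is used is as a hypothesis in the final (conditional) theorem. So there is no paper proof to compare your attempt against, and the conjectural status itself was the intended content.

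As for your sketch on its own merits, the strategy (collapse of the BKSS, then identify the $E^2$ page via Koszul self-duality of the shifted Poisson operad with a connected/hairy graph complex, then count) is a natural one and matches the direction the subsequent literature went. However, the decisive enumerative step does not close as stated. Even after restricting to connected graphs with internal vertices of valence $\geq 3$ and edges subject to the degree constraint, the number of unlabelled graphs on $v$ vertices with $e$ edges is not exponentially bounded in the relevant complexity parameter. Already for $3$-regular graphs the count of unlabelled isomorphism classes on $v$ vertices grows like $\exp\bigl(\tfrac{v}{2}\log v + O(v)\bigr)$, and your bound $\binom{\binom{v}{2}}{e}$ is even weaker. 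Since in the hairy graph complex the degree is linear in $e$ and $v$, this means the chain-level dimensions in total degree $n$ are super-exponential, so the generating function you want to compare against still has radius of convergence zero. In other words, the phrase ``Passing from chains to cohomology only decreases dimensions'' is true but not sufficient: the exponential bound, if it holds at all, has to be proved at the level of cohomology (or after a further algebraic reduction, e.g. to the stable graph complex / Chevalley--Eilenberg side with a concrete cancellation mechanism), and that is precisely the step you have labeled as ``routine'' which is in fact the entire difficulty. Your final paragraph half-acknowledges this, but the claim that the enumerative bound ``applies routinely'' once one has the connected complex is where the argument genuinely breaks down.
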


Corollary~\ref{radius_convergence_coro} tells us that the radius of convergence of the Poincar\'e series for $(\overline{\mathcal{L}}_m^d, \mathcal{K}^{\times m})$ is less than or equal to  $(\frac{1}{m})^{\frac{1}{d-1}}$, but does not tell us that it is less than $R$. We therefore have the following theorem.
%$RC(\overline{\mathcal{L}}_m^d, \mathcal{K}^{\times m})  \leq (\frac{1}{m})^{\frac{1}{d-1}}$, but does not tell that $RC(\overline{\mathcal{L}}_m^d, \mathcal{K}^{\times m}) < R$. 

\begin{thm} If Conjecture~\ref{rc_knots_conjecture} is true, then for $d \geq 4$ and for $m  > \frac{1}{R^{d-1}}$ the radius of convergence of the Poincar\'e series for the pair $(\overline{\mathcal{L}}_m^d, \mathcal{K}^{\times m})$ is less than $R$. That is, $RC(\overline{\mathcal{L}}_m^d, \mathcal{K}^{\times m}) < R$.
\end{thm}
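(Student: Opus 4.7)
The plan is to simply combine the explicit upper bound from Corollary~\ref{radius_convergence_coro} with the quantitative hypothesis on $m$. First, I would invoke Corollary~\ref{radius_convergence_coro}, which gives, for $d > 5$, the estimate
\begin{equation*}
RC(\overline{\mathcal{L}}_m^d, \mathcal{K}^{\times m}) \leq \left(\tfrac{1}{m}\right)^{\frac{1}{d-1}}.
\end{equation*}
This upper bound does not depend on Conjecture~\ref{rc_knots_conjecture}; it is essentially a consequence of the Komawila--Lambrechts Euler series computation (Corollary~\ref{komlam_coro}) together with our collapsing Theorem~\ref{thm2}.

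Next, I would rewrite the hypothesis $m > 1/R^{d-1}$ (which uses that $R > 0$, the content of Conjecture~\ref{rc_knots_conjecture}, so that $1/R^{d-1}$ makes sense as a positive real number) in the equivalent form $1/m < R^{d-1}$. Taking $(d-1)$-th roots on the strictly increasing function $x \mapsto x^{1/(d-1)}$ on $(0, \infty)$ yields
\begin{equation*}
\left(\tfrac{1}{m}\right)^{\frac{1}{d-1}} < R.
\end{equation*}

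Finally, I would chain the two inequalities to conclude
\begin{equation*}
RC(\overline{\mathcal{L}}_m^d, \mathcal{K}^{\times m}) \leq \left(\tfrac{1}{m}\right)^{\frac{1}{d-1}} < R,
\end{equation*}
which is exactly the claim. There is no real obstacle here: the substantive work has already been done in establishing Theorem~\ref{thm2} and Corollary~\ref{radius_convergence_coro}, and this final theorem is a one-line arithmetic comparison between the known upper bound $(1/m)^{1/(d-1)}$ and the quantity $R$ provided (conditionally) by Conjecture~\ref{rc_knots_conjecture}. The only thing to be careful about is making explicit that the conjecture is used solely to guarantee $R > 0$ so that the threshold $1/R^{d-1}$ is a finite real number against which $m$ can be compared.
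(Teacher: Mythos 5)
Your proof is correct and matches the paper's own argument exactly: the paper likewise derives the result immediately from Corollary~\ref{radius_convergence_coro} together with the hypothesis $m > \frac{1}{R^{d-1}}$. You have simply written out the one-line arithmetic that the paper leaves implicit.
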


\begin{proof} 
The proof comes immediately from Corollary~\ref{radius_convergence_coro} and the hypothesis $m  > \frac{1}{R^{d-1}}$.
\end{proof}

%In this section, I will make the point on what have been done in \cite{komawila12}. Next I will give a positive answer to the conjecture, which appear at the end of the same paper. This conjecture says that the cohomology Bousfield-Kan spectral seqeunce associated to the pair $(\overline{\mathcal{L}}_m^d; \mathcal{K}^{\times m})$ collapses at the $E^2$ page so that the Betti number of this pair growth exponentially. We positively answer this conjecture by the help of Theorem~\ref{thm2}. 

\textsf{Université catholique de Louvain, Chemin du Cyclotron 2, B-1348 Louvain-la-Neuve, Belgique\\
Institut de Recherche en Mathématique et Physique\\}
\textit{E-mail address: arnaud.songhafouo@uclouvain.be}

\end{document}